\newcommand{\R}{\mathbb{R}}
\newcommand{\N}{\mathbb{N}}
\newcommand{\ur}[1]{\mathrm{#1}}
\newcommand{\ure}{\ur{e}}
  \renewcommand{\labelenumi}{(\roman{enumi})}
\newcommand{\eps}{\varepsilon}
\DeclareMathOperator{\id}{id}
\newcommand{\defs}{\coloneqq}
\newcommand{\sfed}{\eqqcolon}
\newcommand{\ra}{\rightarrow}
\newcommand{\nea}{\nearrow}
\newcommand{\ol}{\overline}
\newcommand{\ds}{\,\mathrm{d}s}
\newcommand{\ddt}{\frac{\mathrm{d}}{\mathrm{d}t}}
\newcommand{\hp}{\hphantom}
\newcommand{\pe}{\mathrel{\hp{=}}}
\newcommand{\tmax}{T_{\max}}
\newcommand{\intom}{\int_\Omega}
\newcommand{\intnstom}{\int_0^t \int_\Omega}
\newcommand{\inttnstom}{\int_{t_0}^t \intom}
\newcommand{\inttntom}{\int_{t_0}^T \intom}
\newcommand{\Ombar}{\ol \Omega}
\newcommand{\Ombarinf}{\Ombar \times [0, \infty)}
\newcommand{\leb}[2][\Omega]{\ensuremath{L^{#2}(#1)}}
\newcommand{\sob}[3][\Omega]{\ensuremath{W^{#2, #3}(#1)}}
\newcommand{\con}[2][\Ombar]{\ensuremath{C^{#2}(#1)}}
\newcommand{\tops}{\texorpdfstring}
\renewenvironment{proof}[1][\proofname]{\par
  \pushQED{\qed}%
  \normalfont \topsep0\p@\relax
  \trivlist
  \item[\hskip\labelsep\scshape
  #1\@addpunct{.}]\ignorespaces
}{%
  \popQED\endtrivlist\@endpefalse
}
\newtheorem{base}{Base}[section]
\numberwithin{equation}{section}
\newtheorem{theorem}[base]{Theorem} \newtheorem*{theorem*}{Theorem}
\newtheorem{lemma}[base]{Lemma} \newtheorem*{lemma*}{Lemma}
\newtheorem{prop}[base]{Proposition} \newtheorem*{prop*}{Proposition}
\newtheorem{cor}[base]{Corollary} \newtheorem*{cor*}{Corollary}
\theoremstyle{definition}
\newtheorem{remark}[base]{Remark} \newtheorem*{remark*}{Remark}
\newtheorem{definition}[base]{Definition} \newtheorem*{definition*}{Definition}
 \newtheorem*{example*}{Example}
 \newtheorem*{cond*}{Condition}
 \newtheorem*{algo*}{Algorithm}
\numberwithin{algorithm}{section}
\newif\ifhyperconst             
\newcounter{globalconst}
\newcommand{\newgc}[2][]{%
\refstepcounter{globalconst}%
\ltx@label{gc:\thesection:#2}%
\ifhyperconst
  \hyperref[gc:\thesection:#2]{C}_{\ref{gc:\thesection:#2}#1}
\else
  C_{\begin{NoHyper}\ref{gc:\thesection:#2}\end{NoHyper}}
\fi}
  \newcommand{\gc}[2][]{\hyperref[gc:\thesection:#2]{C}_{\ref{gc:\thesection:#2}#1}}
  \newcommand{\gc}[2][]{C_{\begin{NoHyper}\ref{gc:\thesection:#2}\end{NoHyper}}}
\newcounter{localconst}[base]
\newcommand{\newlc}[2][]{%
\refstepcounter{localconst}%
\ltx@label{lc:\thesection:\arabic{base}:#2}%
\ifhyperconst
  \hyperref[lc:\thesection:\arabic{base}:#2]{c}_{\ref{lc:\thesection:\arabic{base}:#2}#1}
\else
  c_{\begin{NoHyper}\ref{lc:\thesection:\arabic{base}:#2}\end{NoHyper}}
\fi}
  \newcommand{\lc}[2][]{\hyperref[lc:\thesection:\arabic{base}:#2]{c}_{\ref{lc:\thesection:\arabic{base}:#2}#1}}
  \newcommand{\lc}[2][]{c_{\begin{NoHyper}\ref{lc:\thesection:\arabic{base}:#2}\end{NoHyper}}}
\title{A cross-diffusion system modelling rivaling gangs: global existence of bounded solutions and FCT stabilization for numerical simulation}
\author{Mario Fuest\footnote{e-mail: fuest@ifam.uni-hannover.de, corresponding author}}
\affil{Leibniz Universität Hannover, \\ Institut für Angewandte Mathematik, \\ Welfengarten 1, 30167 Hannover, Germany}
\author{Shahin Heydari\footnote{e-mail: heydari@karlin.mff.cuni.cz}}
\affil{Charles University, \\ Faculty of Mathematics and Physics, \\ Sokolovska 83, 18675 Praha 8, Czech Republic}
\date{}
\begin{document}

\maketitle

\KOMAoptions{abstract=true}
\begin{abstract}
\noindent
  For the gang territoriality model
  \begin{align*}
    \begin{cases}
      u_t = D_u \Delta u + \chi_u \nabla \cdot (u \nabla w), \\
      v_t = D_v \Delta v + \chi_v \nabla \cdot (v \nabla z), \\
      w_t = -w + \frac{v}{1+v}, \\
      z_t = -z + \frac{u}{1+u},
    \end{cases}
  \end{align*}
  where $u$ and $v$ denote the densities of two rivaling gangs which spray graffiti (with densities $z$ and $w$, respectively) and partially move away from the other gang's graffiti,
  we construct global, bounded classical solutions.
  By making use of quantitative global estimates, we prove that these solutions converge to homogeneous steady states if $\|u_0\|_{L^\infty(\Omega)}$ and $\|v_0\|_{L^\infty(\Omega)}$ are sufficiently small.\\[2pt]
  Moreover, we perform numerical experiments which show that for different choices of parameters, the system may become diffusion- or convection-dominated, where in the former case the solutions converge toward constant steady states while in the later case nontrivial asymptotic behavior such as segregation is observed. In order to perform these experiments, we apply a nonlinear finite element flux-corrected transport method (FEM-FCT) which is positivity-preserving. Then, we treat the nonlinearities in both the system and the proposed nonlinear scheme simultaneously using fixed-point iteration.
  \\[5pt]
 \textbf{Key words:} {gang territoriality, cross-diffusion, global existence, asymptotic behavior, separation, FEM-FCT stabilization, positivity preservation} \\
 \textbf{MSC Classification (2020):} {35K55 (primary), 35A01, 35B40, 35Q91, 65M22, 65M60, 91D10}
\end{abstract}

\section{Introduction}
Graffiti, the artful wall writing usually on public property and in open view,
is sprayed by urban gang members not only to express themselves, convey their attitudes and communicate with fellow gang members
but also to mark their area of control, i.e., the gang's territory \cite{LeyCybriwskyUrbanGraffitiTerritorial1974, BrownGraffitiIdentityDelinquent1978}.
In order to describe the interaction of two rivaling gangs attempting to establish or defend territories by spraying intimidating graffiti,
 \cite{AlsenafiBarbaroConvectionDiffusionModel2018} introduces the model
\begin{align}\label{prob}
  \begin{cases}
    u_t = D_u \Delta u + \chi_u \nabla \cdot (u \nabla w)                     & \text{in $\Omega \times (0, \infty)$}, \\
    v_t = D_v \Delta v + \chi_v \nabla \cdot (v \nabla z)                     & \text{in $\Omega \times (0, \infty)$}, \\
    w_t = -w + f(v)                                                           & \text{in $\Omega \times (0, \infty)$}, \\
    z_t = -z + g(u)                                                           & \text{in $\Omega \times (0, \infty)$}, \\
    D_u \partial_\nu u + \chi_u u \partial_\nu w = D_v \partial_\nu v + \chi_v v \partial_\nu z = 0 & \text{on $\partial \Omega \times (0, \infty)$}, \\
    (u, v, w, z)(\cdot, 0) = (u_0, v_0, w_0, z_0)                             & \text{in $\Omega$},
  \end{cases}
\end{align}
where positive parameters $D_u, D_v, \chi_u, \chi_v$, suitable smooth initial data $u_0, v_0, w_0, z_0$ and spray rates $f, g$ (\cite{AlsenafiBarbaroConvectionDiffusionModel2018} proposes $f = g = \id$) are given.
Here, $u$ and $v$ denote the densities of two rivaling gangs which mark their territory by spraying graffiti with densities $z$ and $w$, respectively.
Apart from random motion, each gang is repelled by the other gangs' graffiti and thus partially moves away from high hostile graffiti concentrations;
the latter key effect is modelled by the taxis terms $+ \chi_u \nabla \cdot (u \nabla w)$, $+ \chi_v \nabla \cdot (v \nabla z)$ in the first two equations.
On the other hand, both gang's graffiti are immobile and decay over time.

\textbf{Related systems.}
In the present paper, we study \eqref{prob} both analytically and numerically.
Before presenting our results, we compare \eqref{prob} to related problems, namely double cross-diffusion and haptotaxis systems.
Under the assumption that the graffiti densities equilibrate instantly (and that $f=g=\id$), \cite{BarbaroEtAlAnalysisCrossdiffusionModel2021} reduces \eqref{prob} to the two-component system
\begin{align}\label{prob:2eq}
  \begin{cases}
    u_t = D_u \Delta u + \chi_u \nabla \cdot (u \nabla v), \\
    v_t = D_v \Delta v + \chi_v \nabla \cdot (v \nabla u)
  \end{cases}
\end{align}
(with positive parameters),
which can also be interpreted as a model for gangs \emph{directly} repelling each other,
and proves a weak-stability result as well as convergence of weak solutions (if they exist) to constant steady states under a smallness condition.
A key difficulty in establishing even a local solution theory for \eqref{prob:2eq} for large data
consists of the nonpositive definiteness of the diffusion matrix $\begin{pmatrix} D_u & \chi_u u \\ \chi_v v & D_v \end{pmatrix}$ whenever $uv > \frac{D_u D_v}{\chi_u \chi_v}$.
Nonetheless, global existence results have recently been obtained which, however, either need to require a certain regularization \cite{DucasseEtAlCrossdiffusionSystemObtained2023}
or can only guarantee solution properties within the parabolic regime \cite{XuExistenceTheoremPartially2023}.

The problem that a diffusion matrix is not positive definite can be overcome in multiple ways,
for instance by replacing linear diffusion with porous medium type diffusion, e.g., $\Delta u$ and $\Delta v$ by $\nabla \cdot (u \nabla u)$ and $\nabla \cdot (v \nabla v)$, respectively.
Indeed, for such a system, global, locally bounded weak solutions exist as long as $D_u D_v > \chi_u \chi_v$,
see \cite{LaurencotMatiocBoundedWeakSolutions2022a} and also its precedent \cite{LaurencotMatiocBoundedWeakSolutions2022}.
%
Moreover, one can also consider \eqref{prob:2eq} with $\chi_u \chi_v < 0$, which then models pursuit--evasion dynamics \cite{TyutyunovEtAlMinimalModelPursuitevasion2007}.
Again the diffusion matrix is positive definite as long as both components are nonnegative
but in contrast to the repulsion-repulsion problem with degenerate diffusion above,
apparently there only exists a single quasi energy functional 
and the a~priori estimates thereby gained only suffice to construct global weak solutions in the one-dimensional setting \cite{TaoWinklerFullyCrossdiffusiveTwocomponent2021, TaoWinklerExistenceTheoryQualitative2020};
in the higher-dimensional case, global weak solutions are only known to exist if the diffusion is sufficiently enhanced or the taxis is saturated \cite{FuestGlobalWeakSolutions2022}.
Furthermore, homogeneous steady states of \eqref{prob:2eq} with $\chi_u \chi_v < 0$ are asymptotically stable
in the sense that global classical solutions emanating from nearby initial data (exist and) converge to these equilibria \cite{FuestGlobalSolutionsHomogeneous2020}.

Another way of regularizing \eqref{prob:2eq} consists in replacing the cross-diffusive contributions with smoother functions; that is, in considering
\begin{align}\label{prob:kernels}
  \begin{cases}
    u_t = D_u \Delta u + \chi_u \nabla \cdot (u \nabla (K \ast v)), \\
    v_t = D_v \Delta v + \chi_v \nabla \cdot (v \nabla (K \ast u))
  \end{cases}
\end{align}
where $K$ denotes a spatial averaging kernel, for instance. (If $K$ is the Dirac delta distribution, one again obtains \eqref{prob:2eq}.)
The system \eqref{prob:kernels} can inter alia be used to describe territorial formations of various animals which remember direct encounters \cite{PottsLewisSpatialMemoryTaxisDriven2019};
for existence results we refer to \cite{GiuntaEtAlLocalGlobalExistence2022, JungelEtAlNonlocalCrossdiffusionSystems2022} and references therein.
Moreover, for the case when $K$ is Green's function for $-\Delta + 1$ with Neumann boundary conditions,
i.e., when $K \ast \varphi$ is the solution $\psi$ of the elliptic equation $-\Delta \psi + \psi = \varphi$ in $\Omega$ with $\partial_\nu \psi = 0$ on $\partial \Omega$,
and when $\chi_u \chi_v < 0$, global classical solutions of \eqref{prob:kernels} are constructed in \cite{LiEtAlLargeTimeBehavior2020}.
For related systems where the signal equations also regularize in time, see for instance \cite{PottsLewisHowMemoryDirect2016} or \cite{WuGlobalBoundednessDiffusive2021}.

In contrast, the third and fourth equations in \eqref{prob} regularize in time only.
Thus, mathematically, \eqref{prob} is related to haptotaxis problem such as
\begin{align*}
  \begin{cases}
    u_t = D_u \Delta u - \nabla \cdot(u \chi_u(v) \nabla v), \\
    v_t = -uv
  \end{cases}
\end{align*}
studied for instance in \cite{CorriasEtAlChemotaxisModelMotivated2003, CorriasEtAlGlobalSolutionsChemotaxis2004}.
These systems share the challenge of controlling cross-diffusion terms involving spatial derivatives of the signal(s) without relying on spatial regularity gained due to diffusion terms in the signal equation(s).

\paragraph{Main analytical results.}
For our global existence result regarding \eqref{prob}, we need to require that
\begin{align}\label{eq:intro:f_g}
  f, g \in C^1([0, \infty)) \cap L^\infty((0, \infty))
  \quad \text{are nonnegative}.
\end{align}
(However, Corollary~\ref{cor:ex_small_data} and Theorem~\ref{th:conv} below also apply to unbounded $f, g$ such as $f = g = \id$.)
A prototypical choice is given by $f(s) = g(s) = \frac{s}{1+s}$ for $s \ge 0$,
which not only satisfies \eqref{eq:intro:f_g} but also guarantees that no graffiti comes into existence out of nowhere by fulfilling $f(0) = g(0) = 0$.
For this example, the amount of sprayed graffiti increases roughly proportionally to the corresponding gang density at that point as long as the latter is rather small but is then limited by some positive constant.
Such a saturation effect appears to be reasonable: For large gang densities, the amount of additional wall writings in an area may be limited by available space rather than by the amount of gang members willing to spray graffiti.

For any such choice of graffiti production terms and any reasonably smooth initial data, we can construct globally bounded classical solutions of \eqref{prob}.
\begin{theorem}\label{th:main}
  Let
  \begin{align}\label{eq:main:omega}
    \Omega \subset \R^n, n \in \{1, 2, 3\}, \text{ be a smooth, bounded domain},
  \end{align}
  $D_u, D_v, \chi_u, \chi_v > 0$,
  $f, g$ be as in \eqref{eq:intro:f_g},
  $\alpha \in (0, 1)$ and $w_0, z_0 \in C^{2+\alpha}(\Ombar; [0, \infty))$.
  Then there exists $C > 0$ such that for all $M > 0$ and all nonnegative $u_0, v_0 \in \con{2+\alpha}$ with
  \begin{align}\label{eq:main:u0_v0}
    D_u \partial_\nu u_0 + \chi_u u_0 \partial_\nu w_0 = D_v \partial_\nu v_0 + \chi_v v_0 \partial_\nu z_0 = 0 \text{ on } \partial \Omega
    \quad \text{and} \quad
    \|u_0\|_{\leb\infty} + \|v_0\|_{\leb\infty} \le M,
  \end{align}
  there exists a unique, global, nonnegative classical solution $(u, v, w, z)$ of \eqref{prob},
  which satisfies the estimates
  \begin{align}\label{eq:main:bound}
    \|u(\cdot, t)\|_{\leb\infty} + \|v(\cdot, t)\|_{\leb\infty} \le CM
    \quad \text{and} \quad
    \|w(\cdot, t)\|_{\leb\infty} + \|z(\cdot, t)\|_{\leb\infty} \le C
    \qquad \text{for all $t \ge 0$}.
  \end{align}
\end{theorem}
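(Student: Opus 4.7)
I would follow the usual haptotaxis template: local existence with an extensibility criterion, then a priori estimates ruling out finite-time blow-up, uniformly in $t \ge 0$. A direct application of Amann's theorem (or a contraction-mapping argument) yields a unique nonnegative classical solution $(u, v, w, z)$ on a maximal interval $[0, \tmax)$ satisfying the blow-up criterion that $\tmax < \infty$ implies $\limsup_{t \nearrow \tmax}(\|u(\cdot, t)\|_{\leb\infty} + \|v(\cdot, t)\|_{\leb\infty}) = \infty$; nonnegativity is clear from the parabolic maximum principle for $u, v$ and from the Duhamel representations of the signal ODEs. The second estimate in \eqref{eq:main:bound} is essentially free: integrating $w_t = -w + f(v)$ pointwise in $t$ and using $f \in \leb[(0,\infty)]\infty$ yields $\|w(\cdot, t)\|_{\leb\infty} \le \|w_0\|_{\leb\infty} + \|f\|_{\leb[(0,\infty)]\infty}$ uniformly in $t$, with a constant independent of $M$, and symmetrically for $z$; mass conservation additionally provides $L^1$-bounds on $u, v$ proportional to $M$.

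\textbf{Core difficulty: $L^p$-estimates.} The principal obstacle is the absence of diffusion in the signal equations: $\nabla w$ only satisfies $(\nabla w)_t + \nabla w = f'(v) \nabla v$, so its spatial regularity is inherited directly from $\nabla v$, and no Keller--Segel-style elliptic/parabolic bootstrap of the signal is available. I would test the $u$-equation against $u^{p-1}$ and, after one Young absorption, arrive at
\begin{align*}
  \tfrac1p \ddt \intom u^p + \tfrac{2 D_u (p-1)}{p^2} \intom |\nabla u^{p/2}|^2 \le \tfrac{\chi_u^2 (p-1)}{2 D_u} \intom u^p |\nabla w|^2,
\end{align*}
and in parallel test the spatially differentiated signal equation against $|\nabla w|^{q-2} \nabla w$ for a suitably chosen $q$, producing a matching inequality that couples $\|\nabla w\|_{\leb q}$ to $\|\nabla v\|_{\leb q}$; symmetric estimates apply to $v$ and $z$. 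I would then combine the four contributions into a single functional of the form $\Phi_p(t) = \intom u^p + \intom v^p + \kappa \intom |\nabla w|^q + \kappa \intom |\nabla z|^q$ with $\kappa > 0$ small, estimate each $\intom u^p |\nabla w|^2$-type term by Hölder together with Gagliardo--Nirenberg---and this is where the dimension hypothesis $n \le 3$ enters crucially, since it controls the interpolation exponents---and absorb the resulting top-order pieces into the dissipative $\intom |\nabla u^{p/2}|^2$-terms. The resulting Grönwall-type differential inequality, combined with the Poincaré--Wirtinger inequality on the zero-mean part and mass conservation on the mean, should yield uniform-in-time $L^p$-bounds on $u, v, \nabla w, \nabla z$ for every finite $p$, with constants depending affinely on $M$.

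\textbf{Bootstrap and conclusion.} Once $u, v$ are bounded in $\leb p$ and $\nabla w, \nabla z$ in $\leb q$ for sufficiently large $p, q$, I would invoke the variation-of-constants formula
\begin{align*}
  u(\cdot, t) = \ure^{t D_u \Delta} u_0 + \chi_u \int_0^t \ure^{(t - s) D_u \Delta} \nabla \cdot (u \nabla w)(\cdot, s) \ds
\end{align*}
together with the standard Neumann heat-semigroup smoothing estimate $\|\ure^{t \Delta} \nabla \cdot \varphi\|_{\leb\infty} \le C t^{-1/2 - n/(2r)} \|\varphi\|_{\leb r}$ and the Hölder splitting $\|u \nabla w\|_{\leb r} \le \|u\|_{\leb{2r}} \|\nabla w\|_{\leb{2r}}$ to conclude $\sup_{t \ge 0} \|u(\cdot, t)\|_{\leb\infty} \le CM$; the bound on $v$ is symmetric. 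The extensibility criterion then forces $\tmax = \infty$, completing the proof of \eqref{eq:main:bound}. The hard step is unambiguously the $L^p$-phase: because the signal equations do not diffuse, the gang densities and the signal gradients must be estimated simultaneously, and calibrating the test exponents and the absorption parameter $\kappa$ so that the differential inequality closes uniformly for every $n \in \{1, 2, 3\}$ and with the right linear dependence on $M$ is where the technical care is concentrated.
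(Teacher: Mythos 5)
Your high-level architecture (local theory, a priori bounds, semigroup bootstrap) is reasonable, but two load-bearing steps fail, and they are exactly the points where the paper has to proceed differently. First, the extensibility criterion: since the $w$- and $z$-equations are ODEs in time, the four-component system is not normally parabolic, and a blow-up criterion in terms of $\|u(\cdot,t)\|_{\leb\infty}+\|v(\cdot,t)\|_{\leb\infty}$ alone is not obtained by ``a direct application of Amann's theorem''. Boundedness of $v$ gives no control of $\nabla w$, which satisfies $(\nabla w)_t=-\nabla w+f'(v)\nabla v$ and therefore only inherits the time-integrated regularity of $\nabla v$, while $\nabla w$ sits in the top-order part of the $u$-equation. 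Accordingly, the paper's local result (Lemma~\ref{lm:local_ex}) is stated for the transformed variables $a=u\ure^{\xi_u w}$, $b=v\ure^{\xi_v z}$ and its criterion involves $\con{1+\tilde\alpha}$ norms; this is precisely why, even after $L^\infty$ bounds are known, the gradient estimates of Lemmata~\ref{lm:nabla_w_z_lp_cmp}--\ref{lm:global_ex} are still needed. If an $L^\infty$-only criterion were available, most of Section~\ref{sec:global_ex} would be superfluous.

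Second, and more seriously, your coupled functional $\Phi_p=\intom u^p+\intom v^p+\kappa\intom|\nabla w|^q+\kappa\intom|\nabla z|^q$ does not close. Testing the $u$-equation with $u^{p-1}$ produces $\intom u^p|\nabla w|^2$, but the only information $\Phi_p$ carries about $\nabla w$ is its $\leb q$ norm, and the $\nabla w$-equation returns $\intom|\nabla v|^q$, which is not matched by the weighted dissipation $\intom v^{p-2}|\nabla v|^2$ from the $v$-equation unless $q=2$ or $v\in\leb\infty$ is already known (which is circular). With $q=2$, handling $\intom u^p|\nabla w|^2$ by H\"older/Gagliardo--Nirenberg requires either $\|u\|_{\leb\infty}$ or $\|\nabla w\|_{\leb r}$ with $r>2$ for $n\ge2$, neither of which the functional controls; the resulting Young exponents are supercritical and the term cannot be absorbed, so no Gr\"onwall inequality of the claimed form emerges, let alone one that is uniform in time with constants linear in $M$. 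The paper avoids this obstruction by a different mechanism: the exponential change of variables removes the taxis term altogether, so that time-uniform, $M$-linear $L^\infty$ bounds for $a,b$ (hence $u,v$) follow from a Moser iteration with a quantitative Ehrling lemma and \emph{no} gradient information (Lemma~\ref{lm:a_b_linfty}); only afterwards are $L^4$ bounds on $\nabla w,\nabla z$ derived, and even there a global Gr\"onwall argument fails --- the estimate is closed only on a short terminal interval $(t_0,T)$ with $T-t_0$ chosen small in terms of $M$, so that the $\intom|\nabla w|^4$ feedback through the variation-of-constants formula can be absorbed (Lemmata~\ref{lm:nabla_w_z_lp_cmp}--\ref{lm:nabla_w_z_l4_total}). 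Your final semigroup bootstrap would be fine once such bounds exist, but as written the proposal is missing the idea that makes the core estimates possible.
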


A direct consequence of this theorem, especially of the bound \eqref{eq:main:bound}, is the existence of global small data solutions of the original system proposed in \cite{AlsenafiBarbaroConvectionDiffusionModel2018}.
\begin{cor}\label{cor:ex_small_data}
  Assume \eqref{eq:main:omega}, let $D_u, D_v, \chi_u, \chi_v > 0$, let $f(s) = g(s) = s$ for $s \ge 0$, $\alpha \in (0, 1)$ and let $w_0, z_0 \in C^{2+\alpha}(\Ombar; [0, \infty))$.
  Then there exists $M > 0$ such that for all nonnegative $u_0, v_0 \in \con{2+\alpha}$ fulfilling \eqref{eq:main:u0_v0}, 
  there exists a unique, global, bounded, nonnegative classical solution $(u, v, w, z)$ of \eqref{prob}.
\end{cor}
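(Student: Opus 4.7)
The corollary should follow from Theorem~\ref{th:main} via a truncation argument: although $f(s) = g(s) = s$ violates the boundedness assumption in \eqref{eq:intro:f_g}, the uniform estimate \eqref{eq:main:bound} shows that small initial data lead to solutions whose $u$- and $v$-components remain uniformly small in time, so only the values of $f$ and $g$ on a bounded interval actually matter along the solution.

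First I would pick any $L > 0$ and choose nonnegative $\wt f, \wt g \in C^1([0, \infty)) \cap L^\infty((0, \infty))$ that agree with the identity on $[0, L]$. This pair fulfills \eqref{eq:intro:f_g}, so Theorem~\ref{th:main}, applied to \eqref{prob} with $f, g$ replaced by $\wt f, \wt g$, provides a constant $C > 0$ (depending on $\Omega$, the parameters, $w_0, z_0$ and on $\wt f, \wt g$, but not on $M$ or on $u_0, v_0$) such that \eqref{eq:main:bound} holds for the associated unique global classical solution $(u, v, w, z)$. Now I would set $M \defs L/C$: for any admissible $u_0, v_0$ with $\|u_0\|_{\leb\infty} + \|v_0\|_{\leb\infty} \le M$, the bound \eqref{eq:main:bound} implies $\|u(\cdot, t)\|_{\leb\infty} + \|v(\cdot, t)\|_{\leb\infty} \le CM = L$ for every $t \ge 0$.

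Consequently, $u(x, t), v(x, t) \in [0, L]$ on $\Omega \times (0, \infty)$, so the identities $\wt f(v) = v = f(v)$ and $\wt g(u) = u = g(u)$ hold pointwise, and thus $(u, v, w, z)$ actually solves \eqref{prob} with the original choice $f = g = \id$; boundedness of $w, z$ is inherited from \eqref{eq:main:bound}. Uniqueness among classical solutions can then be reduced to the same truncation step: any two such solutions are bounded on every finite time interval, and on such an interval both satisfy a suitably truncated system to which the uniqueness part of Theorem~\ref{th:main} applies. The only subtlety to watch is that $L$ must be fixed \emph{before} invoking Theorem~\ref{th:main}, so that the constant $C$ is determined independently of $M$ and the condition $M \le L/C$ is not circular.
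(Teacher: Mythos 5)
Your argument is correct and is essentially the paper's own proof: the paper likewise truncates $f=g=\id$ by a smooth cutoff $\zeta$ (agreeing with the identity on $[0,1]$, i.e.\ with your $L$ fixed to $1$), applies Theorem~\ref{th:main} to the truncated system, and chooses $M \le \frac1C$ so that the bound \eqref{eq:main:bound} forces $u, v \le 1$ and hence the truncated solution solves the original problem. Your additional remark on uniqueness (reducing any bounded classical solution on a finite time interval to a suitably truncated system) is a reasonable way to make explicit a point the paper leaves implicit, and your caveat about fixing $L$ before $C$ matches the paper's ordering.
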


Having established global existence of solutions to \eqref{prob}, a natural next question is whether 
the first two components of these solutions separate.
We first give a negative answer for small initial data and show that the solutions converge towards homogeneous equilibria.
A corresponding result for the two-component system \eqref{prob:2eq} (with $\chi_1, \chi_2 > 0$)
has already been observed for global weak solutions (whose existence, however, is not known yet) in \cite[Theorem~5.1]{BarbaroEtAlAnalysisCrossdiffusionModel2021}
and also numerically in \cite[Section~6]{BarbaroEtAlAnalysisCrossdiffusionModel2021}.

\begin{theorem}\label{th:conv}
  Assume \eqref{eq:main:omega},
  let $D_u, D_v, \chi_u, \chi_v > 0$,
  suppose that $f, g \in C^1([0, \infty))$ are nonnegative
  and let $w_0, z_0 \in C^{2+\alpha}(\Ombar)$ for some $\alpha \in (0, 1)$ also be nonnegative.
  Then there exists $M > 0$ such that for all nonnegative $u_0, v_0 \in \con{2+\alpha}$ fulfilling \eqref{eq:main:u0_v0},
  there is a global classical solution $(u, v, w, z)$ of \eqref{prob} fulfilling
  \begin{alignat}{4}
    u(\cdot, t) &\ra \ol u_0    &\quad &\text{and} \quad & v(\cdot, t) &\ra \ol v_0     &&\qquad \text{in $\leb p$ for all $p \in [1, \infty)$ and} \label{eq:conv:uv}\\
    w(\cdot, t) &\ra f(\ol v_0) &\quad &\text{and} \quad & z(\cdot, t) &\ra g(\ol u_0)  &&\qquad \text{in $\sob12$ and in $\leb p$ for all $p \in [1, \infty)$} \label{eq:conv:wz}
  \end{alignat}
  as $t \to \infty$. (Here, we have set $\ol \varphi \defs \frac{1}{|\Omega|} \intom \varphi$ for $\varphi \in \leb1$.)
\end{theorem}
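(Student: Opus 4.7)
My plan is to first secure existence by truncating $f, g$ and invoking Theorem~\ref{th:main}, then to close a coupled Lyapunov-type estimate that is available under a quantitative smallness condition on $M \defs \|u_0\|_{\leb\infty} + \|v_0\|_{\leb\infty}$, and finally to upgrade the resulting exponential decay to the claimed convergences by interpolation and a Duhamel argument. For the existence step, since $f, g$ need only be $C^1$ here, Theorem~\ref{th:main} does not apply directly: for fixed $N > 0$ I would introduce nonnegative $\wt f, \wt g \in C^1([0, \infty)) \cap L^\infty((0, \infty))$ agreeing with $f, g$ on $[0, N]$, apply Theorem~\ref{th:main} to the truncated system to obtain a global bounded classical solution $(u, v, w, z)$ with $\|u(\cdot, t)\|_{\leb\infty} + \|v(\cdot, t)\|_{\leb\infty} \le CM$ (where $C$ depends on $N$ and the truncations but not on $M$), and fix $N$ large and $M$ small enough that $CM < N$; the truncation is then inactive along the solution, so $(u, v, w, z)$ solves~\eqref{prob}.

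\textbf{Coupled $L^2$ estimates.} Mass conservation yields $\ol u(\cdot, t) = \ol u_0$ and $\ol v(\cdot, t) = \ol v_0$ for all $t \ge 0$. Testing the first equation of \eqref{prob} by $u - \ol u_0$, using the no-flux boundary condition to eliminate boundary terms and absorbing half of the $D_u$-dissipation via Young's inequality, I obtain
\begin{align*}
  \tfrac12 \ddt \intom (u - \ol u_0)^2 + \tfrac{D_u}{2}\intom|\nabla u|^2 \le \tfrac{\chi_u^2 (CM)^2}{2 D_u}\intom|\nabla w|^2,
\end{align*}
and analogously for $v$. Differentiating the ODE $w_t = -w + f(v)$ in space, testing with $\nabla w$ and exploiting $|f'| \le L_f \defs \sup_{[0, CM]}|f'|$ on the bounded range of $v$ gives
\begin{align*}
  \tfrac12 \ddt \intom|\nabla w|^2 + \tfrac12\intom|\nabla w|^2 \le \tfrac{L_f^2}{2}\intom|\nabla v|^2,
\end{align*}
and symmetrically for $\nabla z$ (with $L_g \defs \sup_{[0, CM]} |g'|$).

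\textbf{Lyapunov combination and decay.} The central step is to combine these four inequalities into $E' + \eta E \le 0$ for some $\eta > 0$ and
\begin{align*}
  E(t) \defs \intom (u - \ol u_0)^2 + \alpha \intom (v - \ol v_0)^2 + \beta\intom|\nabla w|^2 + \gamma\intom|\nabla z|^2,
\end{align*}
by choosing the weights $\alpha, \beta, \gamma > 0$ appropriately and invoking the Poincar\'e inequality $\intom(u-\ol u_0)^2 \le C_P \intom|\nabla u|^2$ (and its $v$-analogue) to absorb a small fraction of the remaining gradient dissipation back into $E$. I expect this balancing to be the main obstacle: the cross-term $\intom u \nabla u \cdot \nabla w$ cannot be absorbed by any single dissipation but has to be controlled only through the chain of estimates just derived, and a direct algebraic check should show that suitable weights exist precisely when $M$ is so small that $\chi_u \chi_v L_f L_g M^2 \lesssim D_u D_v$. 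Under such a smallness assumption $E$ decays exponentially, and in particular $\|u - \ol u_0\|_{\leb 2}, \|v - \ol v_0\|_{\leb 2}, \|\nabla w\|_{\leb 2}, \|\nabla z\|_{\leb 2} \to 0$ as $t \to \infty$.

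\textbf{Upgrade to the claimed convergences.} Decay in $\leb 2$ together with the $\leb\infty$-bound yields \eqref{eq:conv:uv} in every $\leb p$, $p \in [1, \infty)$, by interpolation. For \eqref{eq:conv:wz}, I would use the Duhamel representation $w(\cdot, t) - f(\ol v_0) = \ure^{-t}(w_0 - f(\ol v_0)) + \int_0^t \ure^{-(t-s)}(f(v(\cdot, s)) - f(\ol v_0))\ds$: the $\leb p$-convergence of $v$ implies the same for $f(v)$ (since $f \in C^1$ on the bounded range of $v$), which propagates to $w$ in $\leb p$; combined with $\|\nabla w\|_{\leb 2} \to 0$ this gives the claimed $\sob12$-convergence, and symmetrically for $z$.
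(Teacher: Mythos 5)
Your proposal follows essentially the same route as the paper's proof: truncate $f, g$ so that Theorem~\ref{th:main} applies, choose $M$ small enough that the truncation is inactive and $f', g'$ are controlled on the solution's range, combine the $L^2$-estimates for $u - \ol u_0$, $v - \ol v_0$ with the gradient estimates for $w, z$ into a weighted Lyapunov functional absorbed via Poincar\'e, and conclude exponential decay followed by interpolation; your weight-balancing indeed works under a smallness condition of the type you state, matching the paper's choice of $\lc{c_mult}$ and \eqref{eq:conv:def_m}. The only (harmless) deviations are that the paper additionally includes $\intom (w - f(\ol v_0))^2$ and $\intom (z - g(\ol u_0))^2$ in the functional, whereas you recover these convergences afterwards by the Duhamel representation, and that the mild circularity in defining $L_f = \sup_{[0, CM]}|f'|$ should be removed by first requiring $CM \le 1$ (as the paper does by working with $\|f'\|_{C^0([0,1])}$).
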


While Theorem~\ref{th:conv} settles the asymptotic behavior of small data solutions, it does not address the situation for large data;
in particular, nontrivial large-time stabilization leading to some kind of segregation may still be possible.
Usually, such questions are linked to the stability of heterogeneous steady states:
For instance, related systems modelling other types of segregation
such as the Shigesada--Kawasaki--Teramoto model~\cite{ShigesadaEtAlSpatialSegregationInteracting1979}
or the Cahn--Hilliard equation~\cite{CahnHilliardFreeEnergyNonuniform1958}
feature a rich structure of heterogeneous steady states which may attract various solutions,
see for instance \cite{MimuraKawasakiSpatialSegregationCompetitive1980, KutoYamadaMultipleCoexistenceStates2004} for the former
and \cite{WeiWinterStationaryCahnHilliardEquation1998, RybkaHoffmannConvergenceSolutionsCahnHilliard1999} for the later model as well as references therein.
For \eqref{prob}, however, the situation is entirely different: Convergence towards nonconstant smooth steady states is impossible simply because there are no such equilibria;
that is,
all smooth solutions of
\begin{align}\label{prob:ss}
  \begin{cases}
    0 = D_u \Delta u + \chi_u \nabla \cdot (u \nabla w) & \text{in $\Omega$}, \\
    0 = D_v \Delta v + \chi_v \nabla \cdot (v \nabla z) & \text{in $\Omega$}, \\
    0 = -w + f(v)                                       & \text{in $\Omega$}, \\
    0 = -z + g(u)                                       & \text{in $\Omega$}, \\
    D_u \partial_\nu u + \chi_u u \partial_\nu w = D_v \partial_\nu v + \chi_v v \partial_\nu z = 0 & \text{on $\partial \Omega$}
  \end{cases}
\end{align}
are constant.
In \cite[Proposition~4.2]{BarbaroEtAlAnalysisCrossdiffusionModel2021}, this has already been shown in the two-dimensional setting for $f(s) = g(s) = s$, $s \ge 0$.
(In fact, there even the existence of heterogeneous weak solutions to \eqref{prob:ss} fulfilling an entropy estimate is ruled out.)
We show that for many natural choices of $f$ and $g$, no nontrivial smooth steady states exist.
\begin{prop}\label{prop:no_hetero_ss}
  Let $\Omega \subset \R^n$, $n \in \N$, be a smooth, bounded domain,
  let $f, g$ be nonnegative, real analytic functions on $(-\eps, \infty)$ for some $\eps > 0$
  and suppose that $(u, v, w, z) \in (\con2)^4$ is a nonnegative classical solution of \eqref{prob:ss}.
  Then $u, v, w$ and $z$ are constant.
\end{prop}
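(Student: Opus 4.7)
The plan is to combine a strong-maximum-principle argument (to get strict positivity of $u, v$), an entropy-type testing identity (to derive pointwise algebraic relations between $u, v, w, z$), and some real analytic geometry (to conclude). The key reduction is that the PDEs force $D_u \ln u + \chi_u w$ and $D_v \ln v + \chi_v z$ to be constant on $\Ombar$, so that $(u(x), v(x))$ lies in the intersection of two real analytic curves in $(0, \infty)^2$; real analyticity of $f, g$ then ensures that this intersection is discrete, forcing $(u, v)$ to be constant.

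I would first dispose of trivial cases. The first equation reads $D_u \Delta u + \chi_u \nabla w \cdot \nabla u + \chi_u (\Delta w) u = 0$, a linear elliptic equation with continuous coefficients (since $w = f(v) \in \con2$). Together with the no-flux condition --- which at any boundary zero of $u$ forces $\partial_\nu u = 0$, contradicting Hopf's lemma unless $u \equiv 0$ --- the strong maximum principle yields $u \equiv 0$ or $u > 0$ on $\Ombar$; analogously for $v$. If $u \equiv 0$, then $z = g(0)$ is constant, the $v$-equation reduces to $\Delta v = 0$ with $\partial_\nu v = 0$, and constancy follows. So I may assume $u, v > 0$ on $\Ombar$.

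Next, setting $\Phi \defs D_u \ln u + \chi_u w$, the first equation becomes $\nabla \cdot (u \nabla \Phi) = 0$ with boundary condition $u \partial_\nu \Phi = 0$; testing with $\Phi$ and integrating by parts gives $\intom u|\nabla \Phi|^2 = 0$, whence $\Phi \equiv c_1$. Analogously, $\Psi \defs D_v \ln v + \chi_v z \equiv c_2$. Substituting $w = f(v), z = g(u)$, these translate into the pointwise identities $P(u(x), v(x)) = Q(u(x), v(x)) = 0$ for all $x \in \Ombar$, where $P(\xi, \eta) \defs D_u \ln \xi + \chi_u f(\eta) - c_1$ and $Q(\xi, \eta) \defs D_v \ln \eta + \chi_v g(\xi) - c_2$ are real analytic on $(0, \infty) \times (-\eps, \infty)$ and $(-\eps, \infty) \times (0, \infty)$, respectively. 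Since $P_\xi = D_u/\xi$ and $Q_\eta = D_v/\eta$ never vanish, $\{P = 0\}$ and $\{Q = 0\}$ are smooth 1-dimensional real analytic curves, so their intersection is either discrete or contains a common 1-dimensional component. In the discrete case, continuity of $(u, v)$ and connectedness of $\Ombar$ immediately force $(u, v)$, and hence $(w, z)$, to be constant.

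The main obstacle is ruling out the degenerate case. If the two curves share a 1-dimensional component, then parametrizing it as $\eta = \Psi(\xi)$ on some open interval $I$ via $Q = 0$ yields $\Psi(\xi) = \exp((c_2 - \chi_v g(\xi))/D_v)$, and combining with $P(\xi, \Psi(\xi)) = 0$ gives
\begin{align*}
  f\bigl(\exp((c_2 - \chi_v g(\xi))/D_v)\bigr) = (c_1 - D_u \ln \xi)/\chi_u \qquad \text{for all } \xi \in I.
\end{align*}
Since both sides are real analytic on $(0, \infty)$, analytic continuation extends this identity to every $\xi > 0$; but then the right-hand side tends to $-\infty$ as $\xi \to \infty$ while the left-hand side is $\ge 0$ by nonnegativity of $f$ on $(-\eps, \infty)$, a contradiction. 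Hence the 1-dimensional case cannot occur, the intersection is discrete, and the proposition follows. The nonnegativity of $f$ (and, by symmetry, $g$) on the slightly extended domain is exactly what enables this final contradiction.
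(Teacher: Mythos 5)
Your proposal is correct and follows essentially the same route as the paper: both reduce \eqref{prob:ss} to the pointwise relations $D_u \ln u + \chi_u f(v) \equiv \mathrm{const}$ and $D_v \ln v + \chi_v g(u) \equiv \mathrm{const}$ (the paper cites \cite[Lemma~4.1]{BraukhoffLankeitStationarySolutionsChemotaxisconsumption2019} where you rederive this via the strong maximum principle and a testing argument), substitute one relation into the other, and conclude via the identity theorem for one-variable real analytic functions combined with connectedness of $\Ombar$. The only substantive difference is how the degenerate (non-discrete) case is excluded: you use the asymptotics as $\xi \to \infty$ together with $f \ge 0$, whereas the paper notes that $s \mapsto \alpha \ure^{-\xi_u f(\beta \ure^{-\xi_v g(s)})}$ either vanishes identically or never vanishes and hence cannot coincide with the identity map.
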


This leaves open the question whether separation may occur at all.
However, Proposition~\ref{prop:no_hetero_ss} does not rule out the most drastic way of separation,
namely convergence towards multiples of characteristic functions of disjoint sets.
Likewise, other forms of large-time behavior involving infinite time gradient blow up are not excluded either.

\paragraph{Numerical simulations.}
As answering the question whether the gangs may separate analytically appears to be very difficult,
we instead perform numerical experiments which not only address the asymptotic behavior of large-time solutions but also generally illustrate the evolution of gang densities throughout time for various parameter regimes.

Cross-diffusion systems such as (\ref{prob}) can be considered as representatives of diffusion-convection-reaction equations (DCR) in computational fluid dynamics and often standard discretization methods for approximating the numerical solutions of DCR equations produce spuriously oscillating solutions whenever the convection is much larger than diffusion or reaction. Consequently, a vast variety of stabilization techniques has been introduced over the years to overcome these problems. The most popular among them is the Streamline upwind/Petrov--Galerkin (SUPG) method introduced in \cite{BrooksHughes1982}, for which another stabilization acting in crosswind direction has been added by nonlinear so-called spurious oscillations at layers diminishing (SOLD) methods \cite{JK07}. Local-projection stabilization (LPS) schemes \cite{Ganesan2010}, unusual stabilized finite element methods \cite{FrancaFarhat1998}, Mizukami--Hughes method \cite{MiukamiHughes1985}, Galerkin-Least-Square methods \cite{HughesFrancaHulbert1989} are among the other methods which have also been developed for stabilizing DCR problems in the convection-dominated regime. While all the aforementioned approaches attempt to stabilize the finite element method by adding additional terms to the Galerkin finite element discretization, flux-corrected transport (FCT) schemes have been developed  \cite{Kuz09, Kuz12a, KT02, book1975flux} as a different technique which is a nonlinear scheme working on the algebraic level by modifying the algebraic equation obtained from the Galerkin finite element method. Stabilization methods were further investigated and developed for time-dependent DCR models, see for instance \cite{Burman2010, john2021solvability, JhonNovo2011, FrancaHaukeMasud2006}, to just mention a few. However, most of these finite element stabilization techniques deal with the cases where the convection terms are linear and their implementation to nonlinear convection terms still calls for further investigation. To this end, applications of FEM-FCT have been studied in \cite{strehl2010flux, strehl2013positivity, sulman2019positivity} for Keller--Segel models; \cite{huang2021fully} considered a different model containing chemotaxis-Stokes equations and analyzed the error in \cite{FengHuangWang2021}, and the analysis of the solvability and positivity-preservation of the FEM-FCT for a model of cancer invasion has been studied recently in \cite{Hey2023}.

Accordingly, it is not surprising that also for the double cross-diffusion system (\ref{prob}) studied in this paper, standard discretization schemes such as Galerkin finite element method give rise to nonphysical oscillations leading to negative values in the approximate solutions when the sensitivity magnitudes $\chi_u$ and $\chi_v$ of the strongly nonlinear convection terms are large.
Thus as a remedy, we employ a high-resolution nonlinear finite element flux-corrected transport method to reduce the oscillations and preserve the positivity of the solution. Moreover, we deal with strong nonlinear coupling in the system and nonlinearity of the proposed scheme simultaneously using a fixed-point iteration.

To answer the question whether the gangs' populations separate from each other or not, we perform a series of numerical experiments using our newly designed algorithm which is implemented in finite element library deal.II \cite{deal2020, dealII94}. We show that for small values of $\chi$, gang populations stay completely mixed and that the approximate solutions converge toward constant steady-states, see Subsection~\ref{sec:numeric_conv_sss}. However, both partial and complete separation is observed for large values of $\chi$, see Subsections~\ref{sec:numeric_conv_nonsss} and \ref{sec:complete_seg}. Moreover, we compare our outcome with the reported results for the two-component version of \eqref{prob} \cite{BarbaroEtAlAnalysisCrossdiffusionModel2021} and
a related agent-based model \cite{AlsenafiBarbaroConvectionDiffusionModel2018, Alsenafi2021}.

\paragraph{Plan of the paper.}
This article is organized as follows:
Following a brief local existence result in Section~\ref{sec:local_ex}, Section~\ref{sec:global_ex} establishes various a~priori bounds which eventually allow us to prove global existence of solutions, i.e., Theorem~\ref{th:main} and Corollary~\ref{cor:ex_small_data}.
Next, Theorem~\ref{th:conv} and Proposition~\ref{prop:no_hetero_ss}, that is, statements on the asymptotical stability and existence of steady states, are derived in Section~\ref{sec:steady_states}. In Section~\ref{Numerics}, we first discretize the system using the $\theta$-method in time and a Galerkin finite element scheme in space and then enforce the positivity using the FEM-FCT scheme whenever the Galerkin method fails. We demonstrate the numerical results for different choices of parameters and study the convergence of our proposed  method with respect to time step and mesh in Section~\ref{simulation}.

\section{Local existence of classical solutions to a transformed system}\label{sec:local_ex}
As the third and fourth equations in \eqref{prob} do not regularize in space, the graffito-taxis terms in the first two equations are particularly challenging to deal with.
To overcome this issue, we introduce the transformations
\begin{align*}
  a \defs u \ure^{\xi_u w}
  \quad \text{and} \quad
  b \defs v \ure^{\xi_v z},
  \qquad
  \text{where} \quad
  \xi_u \defs \frac{\chi_u}{D_u}
  \quad \text{and} \quad
  \xi_v \defs \frac{\chi_v}{D_v},
\end{align*}
variations of which have been already used for the analysis of several haptotaxis systems (cf.\ \cite{FontelosEtAlMathematicalAnalysisModel2002} and \cite{FriedmanTelloStabilitySolutionsChemotaxis2002} for early examples).
Indeed, as
\begin{align*}
      a_t
  &=  u_t \ure^{\xi_u w} + \xi_u w_t u \ure^{\xi_u w} \\
  &=  \ure^{\xi_u w} \nabla \cdot (D_u \nabla (a \ure^{-\xi_u w}) + \chi_u a \ure^{-\xi_u w} \nabla w)
      + \xi_u w_t a \\
  &=  D_u \ure^{\xi_u w} \nabla \cdot (\ure^{-\xi_u w} \nabla a)
      - \xi_u aw + \xi_u a f(v)
\end{align*}
and likewise
\begin{align*}
      b_t
   =  D_v\ure^{\xi_v z} \nabla \cdot (\ure^{-\xi_v z} \nabla b)
      - \xi_v bz + \xi_v b g(u)
\end{align*}
in $\Omega \times (0, \infty)$ whenever $(u, v, w, z)$ is a global (classical) solution of \eqref{prob},
the system \eqref{prob} is equivalent to
\begin{align}\label{prob:transformed}
  \begin{cases}
    a_t = D_u \ure^{\xi_u w} \nabla \cdot (\ure^{-\xi_u w} \nabla a) - \xi_u aw + \xi_u a f(b \ure^{-\xi_v z}) & \text{in $\Omega \times (0, \infty)$}, \\
    b_t = D_v \ure^{\xi_v z} \nabla \cdot (\ure^{-\xi_v z} \nabla b) - \xi_u bz + \xi_v b g(a \ure^{-\xi_u w}) & \text{in $\Omega \times (0, \infty)$}, \\
    w_t = -w + f(b \ure^{-\xi_v z})                                                                        & \text{in $\Omega \times (0, \infty)$}, \\
    z_t = -z + g(a \ure^{-\xi_u w})                                                                        & \text{in $\Omega \times (0, \infty)$}, \\
    \partial_\nu a = \partial_\nu b = 0                                                                    & \text{on $\partial \Omega \times (0, \infty)$}, \\
    (a, b, w, z)(\cdot, 0) = (a_0, b_0, w_0, z_0)                                                          & \text{in $\Omega$},
  \end{cases}
\end{align}
where $a_0 = u_0 \ure^{\xi_u w_0}$ and $b_0 \defs v_0 \ure^{\xi_v z_0}$.
For this transformed system we have the following local existence result.

\begin{lemma}\label{lm:local_ex}
  Let $\Omega \subset \R^n$, $n \in \{1, 2, 3\}$, be a smooth, bounded domain, let $f, g$ be as in \eqref{eq:intro:f_g}, $\alpha \in (0, 1)$ and
  \begin{align}\label{eq:local_ex_init_reg}
    a_0, b_0, w_0, z_0 \in \con{2+\alpha} \quad \text{with} \quad \partial_\nu a_0 = \partial_\nu b_0 = 0 \text{ on } \partial \Omega.
  \end{align}
  Then there exists $\tmax \in (0, \infty]$ and a unique quadruple of nonnegative functions
  \begin{align*}
    (a, b, w, z) \in (C^{2+\alpha, 1+\frac{\alpha}{2}}(\Ombar \times (0, \tmax)) \cap C^1(\Ombar \times [0, \tmax)))^4
  \end{align*}
  with
  \begin{align*}
    (\nabla a, \nabla b, \nabla w, \nabla z) \in (C^1(\Ombar \times [0, \tmax)))^4
  \end{align*}
  solving \eqref{prob:transformed} classically with the property that if $\tmax < \infty$, then
  \begin{align}\label{eq:local_ex:ex_crit}
    \limsup_{t \nea \tmax} \left(
      \|a(\cdot, t)\|_{\con{1+\tilde\alpha}}
      + \|b(\cdot, t)\|_{\con{1+\tilde\alpha}}
    \right) = \infty
  \end{align}
  for all $\tilde \alpha \in (0, 1)$.
\end{lemma}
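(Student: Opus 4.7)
The plan is to build the local solution to (\ref{prob:transformed}) via a Banach fixed-point argument, with the blow-up criterion \eqref{eq:local_ex:ex_crit} then following from standard parabolic Schauder theory. Writing out the divergence, the first equation of (\ref{prob:transformed}) takes the form
\begin{equation*}
  a_t = D_u \Delta a - D_u \xi_u \nabla w \cdot \nabla a - \xi_u a w + \xi_u a f(b \ure^{-\xi_v z}),
\end{equation*}
and an analogous expression holds for $b$; moreover, the identity $D_u \xi_u = \chi_u$ reduces the original no-flux condition on $u$ to the homogeneous Neumann condition $\partial_\nu a = 0$. Hence (\ref{prob:transformed}) may be viewed as two semilinear parabolic problems with drift coupled to the two pointwise-in-$x$ ODEs for $w$ and $z$, a structure naturally amenable to a contraction argument.

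\textbf{Construction of a short-time solution.} Pick $\beta \in (0, \alpha)$, fix $R > 0$ larger than the $\con{1+\beta}$-norms of the initial data, and for $T > 0$ to be chosen small consider the closed set
\begin{equation*}
  S_{T,R} \defs \Bigl\{ (\hat a, \hat b, \hat w, \hat z) \in \bigl(C^0([0,T]; \con{1+\beta})\bigr)^4 : \text{initial traces as prescribed and } \sup_{t \in [0, T]} \|(\hat a, \hat b, \hat w, \hat z)(\cdot,t)\|_{\con{1+\beta}} \le R \Bigr\},
\end{equation*}
equipped with the $C^0(\Ombar \times [0,T])$-metric. On $S_{T,R}$ define an operator $\Phi$ by first solving the two scalar ODEs for $w$ and $z$ via variation of constants, with $\hat b, \hat z$ and $\hat a, \hat w$ inserted into the respective source terms, and then solving the two linear parabolic problems for $a, b$ obtained by inserting this $w, z$ into the coefficients and source terms of the first two equations of (\ref{prob:transformed}). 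Since $f, g$ are bounded $C^1$-functions and $R$ controls the $\con{1+\beta}$-norm of the input, spatial differentiation of the variation-of-constants representation combined with parabolic Schauder estimates yields output bounds of the schematic form $\text{initial data} + c(R) T^\gamma$ for some $\gamma > 0$, so that $\Phi(S_{T,R}) \subseteq S_{T,R}$ for $T$ small; the same estimates applied to differences of two images give contraction in the $C^0$-metric. Banach's theorem then delivers a unique fixed point, which solves (\ref{prob:transformed}) classically on a short time interval.

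\textbf{Nonnegativity, higher regularity, extension, and the main obstacle.} Nonnegativity of $w$ and $z$ is immediate from the variation-of-constants representation together with $f, g \ge 0$, whereupon nonnegativity of $a$ and $b$ follows from the parabolic weak maximum principle applied to their equations (the reaction vanishes at zero). The announced $C^{2+\alpha, 1+\alpha/2}$-regularity on $\Ombar \times (0, \tmax)$ and $C^1$-regularity up to $t = 0$ then follow by standard Schauder bootstrapping. Extension to a maximal time $\tmax$ is obtained by iterating the construction, and the criterion \eqref{eq:local_ex:ex_crit} follows by the contrapositive: a uniform $\con{1+\tilde\alpha}$-bound on $a$ and $b$ propagates via the ODE representations to analogous bounds on $w$ and $z$, at which point the Banach step provides an extension interval of uniform length, contradicting maximality. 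The main technical obstacle is matching regularity scales across the transformation: the drift $\nabla w \cdot \nabla a$ forces the iteration to be closed in a norm that controls one spatial derivative of $a$, which in turn forces $\hat b$ and $\hat z$ to lie in the same norm so that a single spatial derivative of the ODE output $w$ inherits Hölder continuity; the choice $\con{1+\beta}$ is made precisely to close this cycle.
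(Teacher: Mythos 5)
Your proposal is correct and follows essentially the same route as the paper, which disposes of this lemma by exactly the argument you outline -- a Banach fixed-point construction for the transformed semilinear parabolic/ODE system combined with parabolic (Schauder) regularity theory -- and simply refers to \cite[Lemma~2.5]{FuestEtAlGlobalExistenceClassical2022} and \cite[Lemmata~2.1--2.2]{TaoWinklerEnergytypeEstimatesGlobal2014} for the details you spell out. Your additional observations (reduction of the no-flux condition to $\partial_\nu a = \partial_\nu b = 0$, nonnegativity via variation of constants and the maximum principle, and the contrapositive argument for \eqref{eq:local_ex:ex_crit} using the ODE structure of the $w$- and $z$-equations) match the cited template.
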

\begin{proof}
  This can be shown by means of a fixed point argument and parabolic regularity theory,
  see for instance \cite[Lemma~2.5]{FuestEtAlGlobalExistenceClassical2022} or \cite[Lemma~2.1 and Lemma~2.2]{TaoWinklerEnergytypeEstimatesGlobal2014} for details.
\end{proof}

\section{Global existence and boundedness}\label{sec:global_ex}
In this section, we always assume that
\begin{align}\label{eq:main_ass}
  \Omega \text{ is a smooth bounded domain in $\R^n$, $n \in \{1, 2, 3\}$}
  \text{ and }
  f, g \text{ are as in \eqref{eq:intro:f_g}}.
\end{align}

In order to prove that the solution constructed in Lemma~\ref{lm:local_ex} is global in time,
we need to show that \eqref{eq:local_ex:ex_crit} does not hold for some $\tilde \alpha \in (0, 1)$;
that is, that all solution components remain bounded in $\con{2+\tilde \alpha}$.
This is achieved by a series of a priori estimates, which in part rely on previously established bounds.
In particular, in Lemma~\ref{lm:nabla_w_z_l4_total} we apply the Lemma~\ref{lm:nabla_w_z_lp_cmp} (and hence indirectly also Lemmata~\ref{lm:w_z_linfty}--\ref{lm:a_b_linfty}) to the solution of \eqref{prob:transformed} with initial data $(a, b, z, w)(\cdot, t_0)$ for some $t_0 \in (0, \tmax)$.
Therefore, we need to carefully track the dependency of the constants in the estimates below on the initial data and thus introduce the condition
\begin{align}\label{eq:bdd_M}
  a_0, b_0, w_0, z_0 
  \quad \text{fulfill \eqref{eq:local_ex_init_reg} for some $\alpha \in (0, 1)$ and }
  \begin{cases}
    \|a_0\|_{\leb\infty} \le M, \\
    \|b_0\|_{\leb\infty} \le M, \\
    \|w_0\|_{\leb\infty} \le L, \\
    \|z_0\|_{\leb\infty} \le L
  \end{cases}
\end{align}
for $M, L > 0$.
That is, for fixed $M, L > 0$,
the constants given by Lemma~\ref{lm:w_z_linfty}, Lemma~\ref{lm:a_b_l1}, Lemma~\ref{lm:a_b_linfty} and Lemma~\ref{lm:nabla_w_z_lp_cmp} below
do not depend on the precise form of the initial data, provided those fulfill \eqref{eq:bdd_M}.
Also, the constants in the former three lemmata may not depend on $M$.

\subsection{\tops{$L^\infty$}{L infty} estimates}
We start with two rather basic estimates, namely $L^\infty$ bounds for the last two and $L^1$ bounds for the first two equations in \eqref{prob}.
Already at this point we make use of the fact that \eqref{eq:intro:f_g} contains boundedness of $f$ and $g$.
\begin{lemma}\label{lm:w_z_linfty}
  Assume \eqref{eq:main_ass} and let $L > 0$.
  Then there exists $\newgc{w_z_linfty} > 0$ such that for all $M > 0$ and all initial data satisfying \eqref{eq:bdd_M},
  the corresponding solution $(a, b, w, z)$ of \eqref{prob:transformed} given by Lemma~\ref{lm:local_ex} fulfills
  \begin{align}\label{eq:w_z_linfty:statement}
    \|w(\cdot, t)\|_{\leb\infty} \le \gc{w_z_linfty}
    \quad \text{and} \quad
    \|z(\cdot, t)\|_{\leb\infty} \le \gc{w_z_linfty}
    \qquad \text{for all $t \in (0, \tmax)$}.
  \end{align}
\end{lemma}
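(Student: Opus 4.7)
The plan is to exploit the fact that the third and fourth equations of \eqref{prob:transformed} are pointwise ODEs in $t$ whose right-hand sides are bounded uniformly, thanks to \eqref{eq:intro:f_g}. Since the bound on $f, g$ depends only on $f, g$ themselves and not on the initial data, the resulting $L^\infty$ bound on $w, z$ will depend only on $L$ and on $\|f\|_{L^\infty((0,\infty))}$ and $\|g\|_{L^\infty((0,\infty))}$, but not on $M$, as required.

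First, I would fix $x \in \Omega$ and $t \in (0, \tmax)$ and view $w(x, \cdot)$ as the solution of the scalar ODE
\begin{align*}
  \ddt w(x,t) + w(x,t) = f(b(x,t) \ure^{-\xi_v z(x,t)}),
  \qquad w(x,0) = w_0(x).
\end{align*}
Since $f \ge 0$ by \eqref{eq:intro:f_g}, the right-hand side is nonnegative, and since $f \le \|f\|_{L^\infty((0,\infty))} \sfed K_f$, an ODE comparison (equivalently, the variation-of-constants representation $w(x,t) = \ure^{-t} w_0(x) + \int_0^t \ure^{-(t-s)} f(b \ure^{-\xi_v z})(x,s) \ds$) yields
\begin{align*}
  0 \le w(x,t) \le \ure^{-t} w_0(x) + (1 - \ure^{-t}) K_f \le \max\{w_0(x), K_f\} \le \max\{L, K_f\}.
\end{align*}
Taking the supremum over $x$ gives the desired $L^\infty$ bound for $w$. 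An analogous argument applied to the equation for $z$ yields $\|z(\cdot,t)\|_{\leb\infty} \le \max\{L, K_g\}$ where $K_g \defs \|g\|_{L^\infty((0,\infty))}$, and setting $\gc{w_z_linfty} \defs \max\{L, K_f, K_g\}$ gives \eqref{eq:w_z_linfty:statement}.

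The only subtlety to address is that the nonnegativity of $w, z$ (already guaranteed by Lemma~\ref{lm:local_ex}) is preserved, and that the constant is independent of $M$; both follow directly from the fact that the argument uses nothing about $a, b$ beyond pointwise nonnegativity, which Lemma~\ref{lm:local_ex} provides. There is no real obstacle here — this lemma is the ``easy'' part of the bootstrap, and its role is to provide the baseline regularity on $w, z$ that the later estimates on $\nabla w, \nabla z$ and on $a, b$ will build upon.
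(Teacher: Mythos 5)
Your argument is correct and is essentially the paper's proof: the paper compares $w$ and $z$ with the constant supersolutions $\max\{\|w_0\|_{\leb\infty}, \|f\|_{L^\infty((0,\infty))}\}$ and $\max\{\|z_0\|_{\leb\infty}, \|g\|_{L^\infty((0,\infty))}\}$, which is the same pointwise ODE comparison you carry out via variation of constants. The resulting constant depends only on $L$ and the bounds on $f, g$, exactly as you note.
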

\begin{proof}
  The functions $\ol w \defs \max\{\|w_0\|_{\leb\infty}, \|f\|_{L^\infty((0, \infty))}\}$ and $\ol z \defs \max\{\|z_0\|_{\leb\infty}, \|g\|_{L^\infty((0, \infty))}\}$ are bounded supersolutions
  of the third and fourth subproblems in \eqref{prob:transformed}, respectively,
  and both $w$ and $z$ are nonnegative by Lemma~\ref{lm:local_ex}.
\end{proof}

\begin{lemma}\label{lm:a_b_l1}
  Assume \eqref{eq:main_ass} and let $L > 0$.
  Then there exists $\newgc{a_b_l1} > 0$ such that for all $M > 0$ and all initial data satisfying \eqref{eq:bdd_M},
  the corresponding solution $(a, b, w, z)$ of \eqref{prob:transformed} given by Lemma~\ref{lm:local_ex} fulfills
  \begin{align*}
    \|a(\cdot, t)\|_{\leb1} \le \gc{a_b_l1} M
    \quad \text{and} \quad
    \|b(\cdot, t)\|_{\leb1} \le \gc{a_b_l1} M
    \qquad \text{for all $t \in (0, \tmax)$}.
  \end{align*}
\end{lemma}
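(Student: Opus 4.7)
The plan is to exploit the fact that although the transformed $a$-equation is not in conservation form, the corresponding original variable $u \defs a\ure^{-\xi_u w}$ does satisfy a conservation law. Concretely, the boundary condition $\partial_\nu a = 0$ combined with $\xi_u = \chi_u/D_u$ forces
\begin{align*}
 D_u \partial_\nu u + \chi_u u \partial_\nu w = D_u \ure^{-\xi_u w}\partial_\nu a + \bigl(\chi_u - D_u \xi_u\bigr) u \partial_\nu w = 0,
\end{align*}
and by the very computation that motivated the transformation, $(u,v,w,z)$ with $v \defs b\ure^{-\xi_v z}$ is a classical solution of the original system \eqref{prob} on $[0,\tmax)$ with initial data $(a_0 \ure^{-\xi_u w_0}, b_0 \ure^{-\xi_v z_0}, w_0, z_0)$.

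First I would integrate the $u$-equation over $\Omega$ and use the no-flux boundary condition just verified to obtain the mass-conservation identity $\intom u(\cdot,t) = \intom u_0$ for all $t \in (0,\tmax)$, and analogously $\intom v(\cdot,t) = \intom v_0$. Since $w_0,z_0 \ge 0$, the inverse transformation gives $u_0 \le a_0$ and $v_0 \le b_0$ pointwise, so \eqref{eq:bdd_M} yields $\intom u_0 \le |\Omega| M$ and $\intom v_0 \le |\Omega| M$.

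Second, I would convert these $L^1$ bounds for $u,v$ back into bounds for $a,b$ via the $L^\infty$ control on $w,z$ furnished by Lemma~\ref{lm:w_z_linfty}: writing $a = u\ure^{\xi_u w}$ and invoking $\|w(\cdot,t)\|_{\leb\infty} \le \gc{w_z_linfty}$ yields
\begin{align*}
  \intom a(\cdot,t) = \intom u(\cdot,t)\,\ure^{\xi_u w(\cdot,t)} \le \ure^{\xi_u \gc{w_z_linfty}} \intom u_0 \le |\Omega|\, \ure^{\xi_u \gc{w_z_linfty}} M,
\end{align*}
and analogously for $b$ with $\xi_v$ in place of $\xi_u$. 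Setting $\gc{a_b_l1} \defs |\Omega|\max\bigl\{\ure^{\xi_u \gc{w_z_linfty}},\ure^{\xi_v \gc{w_z_linfty}}\bigr\}$ completes the proof.

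There is no substantive analytic obstacle here; the argument is essentially bookkeeping that leverages the mass-conservation built into the original (as opposed to the transformed) formulation. The only subtle point to keep track of is the dependence of constants on the parameters of \eqref{eq:bdd_M}: since $\gc{w_z_linfty}$ depends on $L$ but not on $M$ (Lemma~\ref{lm:w_z_linfty}), the resulting $\gc{a_b_l1}$ inherits the same property, and the linear scaling in $M$ comes precisely from $\|u_0\|_{\leb1}, \|v_0\|_{\leb1} \le |\Omega|M$.
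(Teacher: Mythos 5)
Your proposal is correct and follows essentially the same route as the paper: pass back to $u = a\ure^{-\xi_u w}$, use conservation of mass from the first equation of \eqref{prob} together with $u_0 \le a_0$ (since $w_0 \ge 0$), and then convert back to $a$ via the $L^\infty$ bound on $w$ from Lemma~\ref{lm:w_z_linfty}. The paper's proof is merely terser, leaving the boundary-condition check and the estimate $\intom u_0 \le |\Omega| M$ implicit.
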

\begin{proof}
  Integrating the first equation in \eqref{prob} shows that $\intom u(\cdot, t) = \intom u_0$ for $t \in (0, \tmax)$. 
  With $\gc{w_z_linfty}$ as given by Lemma~\ref{lm:w_z_linfty}, the definition of $a$ thus implies
  \begin{align*}
        \intom a(\cdot, t)
    =   \intom (u \ure^{\xi_u w})(\cdot, t)
    \le \ure^{\xi_u \gc{w_z_linfty}} \intom u_0
    \le M |\Omega| \ure^{\xi_u \gc{w_z_linfty}}
    \qquad \text{for all $t \in (0, \tmax)$}.
  \end{align*}
  The bound for $b$ can be derived analogously.
\end{proof}

As to $L^\infty$ bounds of $a$ and $b$, we note that
\begin{align*}
  \ol a(x, t) \defs \|a_0\|_{\leb\infty} \ure^{\xi_u t\|f\|_{L^\infty((0, \infty))}}
  \quad \text{and} \quad
  \ol b(x, t) \defs \|b_0\|_{\leb\infty} \ure^{\xi_v t\|g\|_{L^\infty((0, \infty))}},
  \qquad (x, t) \in \Ombar \times [0, \tmax), 
\end{align*}
are supersolutions of the first two subproblems in \eqref{prob:transformed} and that accordingly $a$ and $b$ are bounded locally in time.
However, by employing testing procedures and a Moser-type iteration (following \cite{AlikakosBoundsSolutionsReactiondiffusion1979} and \cite{TaoWinklerBoundednessQuasilinearParabolic2012}),
we are also able to obtain $L^\infty$ bounds which are not only time-independent but which additionally depend favorably on $M$ as well.
\begin{lemma}\label{lm:a_b_linfty}
  Assume \eqref{eq:main_ass} and let $L > 0$.
  Then there exists $\newgc{a_b_linfty} > 0$ such that for all $M > 0$ and all initial data satisfying \eqref{eq:bdd_M},
  the corresponding solution $(a, b, w, z)$ of \eqref{prob:transformed} given by Lemma~\ref{lm:local_ex} fulfills
  \begin{align}\label{eq:a_b_linfty:statement}
    \|a(\cdot, t)\|_{\leb\infty} \le \gc{a_b_linfty} M
    \quad \text{and} \quad
    \|b(\cdot, t)\|_{\leb\infty} \le \gc{a_b_linfty} M
    \qquad \text{for all $t \in (0, \tmax)$}.
  \end{align}
\end{lemma}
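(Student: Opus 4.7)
My plan is to establish \eqref{eq:a_b_linfty:statement} by a weighted Moser-type iteration, patterned on \cite{AlikakosBoundsSolutionsReactiondiffusion1979, TaoWinklerBoundednessQuasilinearParabolic2012}. I would rely solely on Lemma~\ref{lm:w_z_linfty}, Lemma~\ref{lm:a_b_l1}, and the boundedness of $f, g$; crucially, the argument must avoid any gradient estimate on $w$ or $z$, since such bounds are only proved later (in Lemma~\ref{lm:nabla_w_z_lp_cmp}) and in fact rely on the present lemma.

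For $p \ge 2$, the pivotal observation is that the weighted functional $\Phi_p(t) \defs \intom \ure^{-\xi_u w} a^p$ satisfies a particularly clean identity. A direct computation of $\partial_t(\ure^{-\xi_u w} a^p)$ using the PDEs for $a$ and $w$ in \eqref{prob:transformed} shows that the weight-derivative contribution $-\xi_u w_t \ure^{-\xi_u w} a^p = -\xi_u \ure^{-\xi_u w} a^p (f(b\ure^{-\xi_v z}) - w)$ combines with the reaction term $p\xi_u \ure^{-\xi_u w} a^p(f - w)$ from $a_t$ to leave only $(p-1)\xi_u \ure^{-\xi_u w} a^p(f - w)$, whereas the divergence-form contribution $p D_u a^{p-1}\nabla\cdot(\ure^{-\xi_u w}\nabla a)$ yields, upon integrating over $\Omega$ and using $\partial_\nu a = 0$, the clean dissipation $-p(p-1)D_u\intom a^{p-2}\ure^{-\xi_u w}|\nabla a|^2$---with no surviving $\nabla w$-cross term. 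Estimating the right-hand side via $f\le\|f\|_{L^\infty((0,\infty))}$, $w\ge 0$, and $\ure^{-\xi_u w}\ge\ure^{-\xi_u\gc{w_z_linfty}}$ (Lemma~\ref{lm:w_z_linfty}), and rewriting $\intom a^{p-2}|\nabla a|^2 = \tfrac{4}{p^2}\intom|\nabla a^{p/2}|^2$, I obtain the differential inequality
$$\ddt \Phi_p + \frac{4(p-1)D_u\ure^{-\xi_u\gc{w_z_linfty}}}{p}\intom|\nabla a^{p/2}|^2 \le (p-1)\xi_u\|f\|_{L^\infty((0,\infty))}\Phi_p.$$

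Combining this with the Gagliardo--Nirenberg--Young estimate $\|\varphi\|_{\leb 2}^2 \le \eps\|\nabla\varphi\|_{\leb 2}^2 + C_\eps\|\varphi\|_{\leb 1}^2$ (valid on bounded domains in $n\le 3$) applied to $\varphi = a^{p/2}$, and choosing $\eps$ of order $p^{-1}$ so that the $\Phi_p$-term on the right is absorbed by the gradient term on the left, I arrive at
$$\ddt\Phi_p + \Phi_p \le K_p\Big(\sup_{s\in(0,\tmax)}\intom a^{p/2}(\cdot,s)\Big)^2,$$
with $K_p$ of polynomial order in $p$. Setting $p_k = 2^k$ and $z_k\defs\sup_{t\in(0,\tmax)}\|a(\cdot,t)\|_{\leb{p_k}}$, and using $\Phi_{p_k}(0)\le\|a_0\|_{\leb\infty}^{p_k}|\Omega|\le(M\ure^{\xi_u L})^{p_k}|\Omega|$ together with the equivalence $\Phi_{p_k}\le \intom a^{p_k}\le \ure^{\xi_u\gc{w_z_linfty}}\Phi_{p_k}$, an ODE comparison would yield a recursion
$$z_k \le \alpha_k \max\big(M\ure^{\xi_u L}|\Omega|^{1/p_k},\ \wt K_k^{1/p_k} z_{k-1}\big)$$
with $\alpha_k\to 1$ and $\wt K_k^{1/p_k}\to 1$, the latter because $\wt K_k$ grows only polynomially in $p_k = 2^k$. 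Lemma~\ref{lm:a_b_l1} provides the base case $z_0\le\gc{a_b_l1} M$, and the standard (summable) analysis of this recursion then yields $z_k\le\gc{a_b_linfty} M$ uniformly in $k$; passing to $k\to\infty$ gives the desired bound on $a$. The bound on $b$ follows by an entirely symmetric argument with $(\xi_v, z, g)$ replacing $(\xi_u, w, f)$.

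The principal difficulty is preserving the linear-in-$M$ dependence throughout the iteration despite having no a~priori estimates on $\nabla w$ or $\nabla z$. The weight $\ure^{-\xi_u w}$ plays the role of an integrating factor: it causes the implicit $\nabla w\cdot\nabla a$ contribution in $D_u\ure^{\xi_u w}\nabla\cdot(\ure^{-\xi_u w}\nabla a) = D_u\Delta a - D_u\xi_u\nabla w\cdot\nabla a$ to be absorbed into the divergence-form expression that integrates by parts cleanly. An unweighted energy estimate would instead produce a term $\intom a^p|\nabla w|^2$, which is simply not controllable at this stage of the paper. The remaining bookkeeping required to verify $\wt K_k^{1/p_k}\to 1$ and the uniform boundedness of $z_k/M$ is routine.
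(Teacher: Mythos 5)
Your proposal is correct and follows essentially the same route as the paper: the same weighted functional $\intom \ure^{-\xi_u w} a^p$ serving as an integrating factor that cancels the $\nabla w$-cross term, the same interpolation step with a constant growing polynomially in $p$ (the paper cites a quantitative Ehrling-type lemma for this), the same ODE comparison, and the same Moser iteration over $p_k = 2^k$ anchored at the $L^1$ bound from Lemma~\ref{lm:a_b_l1}, with the linear dependence on $M$ tracked throughout. No substantive differences.
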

\begin{proof}
  We fix initial data satisfying \eqref{eq:bdd_M} and the corresponding solution $(a, b, w, z)$ of \eqref{prob:transformed} given by Lemma~\ref{lm:local_ex}.
  
  Moreover, by a quantitative version of Ehrling's lemma proved in \cite[Lemma~2.5]{FuestBlowupProfilesQuasilinear2020}, there exist $\newlc{ehrling} > 0$ and $\mu > 0$ such that
  \begin{align*}
          \lc{f_linfty} p \intom \varphi^2
    &\le  \frac{2D_u\ure^{-\xi_u\gc{w_z_linfty}}}{p} \intom |\nabla \varphi|^2
          + \lc{ehrling} p^\mu \left( \intom |\varphi| \right)^2
    \qquad \text{for all $\varphi \in \sob12$ and all $p \ge 2$}.
  \end{align*}
  where $\newlc{f_linfty} \defs \xi_u \|f\|_{L^\infty((0, \infty))}+1$ and where $\gc{w_z_linfty}$ is given by Lemma~\ref{lm:w_z_linfty}.
  For $p \ge 2$ and $T \in (0, \tmax)$, we can thus calculate
  \begin{align*}
          \ddt \intom \ure^{-\xi_u w} a^p
    &=    p \intom \ure^{-\xi_u w} a^{p-1} a_t 
          - \xi_u \intom \ure^{-\xi_u w} a^p w_t  \\
    &=    - D_u p \intom \ure^{-\xi_u w} \nabla a \cdot \nabla a^{p-1}
          + \xi_u(p-1) \intom \ure^{-\xi_u w} a^p (-w + f(b \ure^{-\xi_v z})) \\
    &\le  - \frac{4D_u(p-1)\ure^{-\xi_u\gc{w_z_linfty}}}{p^2} \intom |\nabla a^\frac p2|^2
          + \lc{f_linfty} (p-1) \intom \ure^{-\xi_u w} a^p \\
    &\le  - \lc{f_linfty} \intom \ure^{-\xi_u w} a^p
          - \frac{2D_u\ure^{-\xi_u\gc{w_z_linfty}}}{p} \intom |\nabla a^\frac p2|^2
          + \lc{f_linfty} p \intom (a^\frac p2)^2 \\
    &\le  - \lc{f_linfty} \intom \ure^{-\xi_u w} a^p
          + \lc{ehrling} p^\mu \left( \intom a^{\frac p2} \right)^2 \\
    &\le  - \lc{f_linfty} \intom \ure^{-\xi_u w} a^p
          + \lc{ehrling} p^\mu \sup_{s \in (0, T)} \left( \intom a^{\frac p2}(\cdot, s) \right)^2
    \qquad \text{in $(0, T)$},
  \end{align*}
  which in combination with an ODE comparison argument implies
  \begin{align*}
        \intom (\ure^{-\xi_u w} a^p)(\cdot, t)
    \le \max\left\{ \intom \ure^{-\xi_u w_0} a_0^p,\; \frac{\lc{ehrling} p^\mu}{\lc{f_linfty}} \sup_{s \in (0, T)} \left( \intom a^{\frac p2}(\cdot, s) \right)^2 \right\}
  \end{align*}
  for all $T \in (0, \tmax)$, all $t \in (0, T)$ and all $p \ge 2$.
  Thus, setting $\newlc{a_lp} \defs \ure^{\xi_u \gc{w_z_linfty}} \max\{1, \frac{\lc{ehrling}}{\lc{f_linfty}}\}$, we obtain
  \begin{align}\label{eq:a_b_linfty:a_lp_est}
        \intom a^p(\cdot, t)
    \le \lc{a_lp} \max\left\{ \intom a_0^p,\; p^\mu \sup_{s \in (0, T)} \left( \intom a^{\frac p2}(\cdot, s) \right)^2 \right\}
  \end{align}
  for all $T \in (0, \tmax)$, all $t \in (0, T)$ and all $p \ge 2$.
  We next set
  \begin{align*}
    p_j \defs 2^j
    \quad \text{and} \quad
    A_j(T) \defs \sup_{s \in (0, T)} \|a(\cdot, s)\|_{\leb{p_j}}
    \qquad \text{for $j \in \N_0$ and $T \in (0, \tmax)$},
  \end{align*}
  so that with $\newlc{Aj_left_bdd} \defs \lc{a_lp} \max\{1, |\Omega|\}$ and
  as the condition $\|a_0\|_{\leb\infty} \le M$ in \eqref{eq:bdd_M} implies $\|a_0\|_{\leb{p_j}} \le |\Omega|^{\frac{1}{p_j}} M \le \max\{1, |\Omega|\} M$,
  we further infer from \eqref{eq:a_b_linfty:a_lp_est} that
  \begin{align*}
    A_j(T) \le \max\left\{\lc{Aj_left_bdd} M, (\lc{a_lp} p_j^\mu)^\frac{1}{p_j} A_{j-1}(T) \right\}
    \qquad \text{for all $j \in \N$ and $T \in (0, \tmax)$}.
  \end{align*}
  We now fix $T \in (0, \tmax)$.
  If there are infinitely many $j \in \N_0$ with $A_j(T) \le \lc{Aj_left_bdd} M$,
  then
  \begin{align}\label{eq:a_b_linfty:bdd_1}
        \|a\|_{L^\infty(\Omega \times (0, T))}
    =   \liminf_{j \to \infty} A_j(T)
    \le \lc{Aj_left_bdd} M.
  \end{align}
  Else there exists $j_0 \in \N_0$  such that
  \begin{align}\label{eq:a_b_linfty:aj_right}
    A_j(T) \le (\lc{a_lp} p_j^\mu )^\frac1{p_j} A_{j-1}(T)
    \qquad \text{for all $j > j_0$}.
  \end{align}
  (We note that while $j_0$ may depend on the initial data, the constant $\gc{a_b_linfty}$ defined below will not.)
  Choosing $j_0$ minimal, we can moreover assume that
  \begin{align}\label{eq:a_b_linfty:est_bj0}
    A_{j_0}(T) \le \max\{\gc{a_b_l1}, \lc{Aj_left_bdd}\} M,
  \end{align}
  where $\gc{a_b_l1}$ is as in Lemma~\ref{lm:a_b_l1}. (That lemma guarantees $A_0(T) \le \gc{a_b_l1} M$.)
  By induction and as $p_j = 2^j$ for $j \in \N$, \eqref{eq:a_b_linfty:aj_right} implies
  \begin{align*}
        A_{j}(T)
    \le \left( \prod_{k=j_0+1}^j (\lc{a_lp} p_k^\mu )^\frac1{p_k} \right) A_{j_0}
    =   {\lc{a_lp}\vphantom{2}}^{\sum_{k=j_0+1}^j 2^{-k}} \cdot 2^{\mu \sum_{k=j_0+1}^j k 2^{-k}} \cdot A_{j_0}
    \qquad \text{for all $j > j_0$}.
  \end{align*}
  In combination with \eqref{eq:a_b_linfty:est_bj0}, we arrive at
  \begin{align*}
          \|a\|_{L^\infty(\Omega \times (0, T))}
    &=    \lim_{j \ra \infty} A_j(T)
    \le   {\lc{a_lp}\vphantom{2}}^{\sum_{k=j_0+1}^\infty 2^{-k}} \cdot 2^{\mu \sum_{k=j_0+1}^\infty k 2^{-k}} \cdot A_{j_0} \\
    &\le  {\lc{a_lp}\vphantom{2}}^{\sum_{k=0}^\infty 2^{-k}} \cdot 2^{\mu \sum_{k=0}^\infty k 2^{-k}} \cdot \max\{\gc{a_b_l1}, \lc{Aj_left_bdd}\} M
    \sfed \newlc{Aj_right_bdd} M.
  \end{align*}
  Together with \eqref{eq:a_b_linfty:bdd_1} this implies $\|a\|_{L^\infty(\Omega \times (0, T))} \le \gc{a_b_linfty} M$ for all $T \in (0, \tmax)$,
  where $\gc{a_b_linfty} \defs \max\{\lc{Aj_left_bdd}, \lc{Aj_right_bdd}\}$.
  Letting $T \nea \tmax$, we obtain the first statement in \eqref{eq:a_b_linfty:statement},
  while the second one follows upon an analogous computation for the second solution component of \eqref{prob:transformed}, possibly after enlarging $\gc{a_b_linfty}$.
\end{proof}

\subsection{Gradient estimates}
Our next step towards showing that \eqref{eq:local_ex:ex_crit} cannot hold consists in deriving uniform-in-time $L^4$ estimates for $\nabla w$ and $\nabla z$.
To that end, we follow a technique introduced in \cite[Lemmata~3.13--3.15]{TaoWinklerEnergytypeEstimatesGlobal2014} for a haptotaxis system.
As a preparation, we first note that the time regularization in the third and fourth equation in \eqref{prob:transformed} implies
that space-time gradient estimates for $a$ and $b$ imply uniform-in-time gradient estimates for $w$ and $z$.
\begin{lemma}\label{lm:nabla_w_z_lp_cmp}
  Assume \eqref{eq:main_ass} and let $L, M > 0$. 
  Moreover, let $T_0 > 0$ and $p \in (1, \infty)$.
  Then there exists $\newgc{nabla_w_z_lp_cmp} > 0$ such that for all initial data satisfying \eqref{eq:bdd_M},
  the corresponding solution $(a, b, w, z)$ of \eqref{prob:transformed} given by Lemma~\ref{lm:local_ex} fulfills
  \begin{align}\label{eq:nabla_w_z_lp_cmp:statement}
        \intom |\nabla w(\cdot, t)|^p + \intom |\nabla z(\cdot, t)|^p
    \le \gc{nabla_w_z_lp_cmp} \left( \intom w_0^p + \intom z_0^p + \intnstom |\nabla a|^p + \intnstom |\nabla b|^p \right)
  \end{align}
  for all $t \in (0, T)$, where $T \defs \min\{T_0, \tmax\}$.
\end{lemma}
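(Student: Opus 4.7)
The plan is to exploit the ODE-type structure of the third and fourth equations in \eqref{prob:transformed}: since these contain no spatial derivatives, differentiating $w_t = -w + f(v)$ (with $v = b\ure^{-\xi_v z}$) in space yields the pointwise-in-$x$ ODE
\begin{align*}
  \partial_t \nabla w + \nabla w = f'(v)\nabla v = f'(v)\ure^{-\xi_v z}\bigl(\nabla b - \xi_v b\nabla z\bigr),
\end{align*}
and an analogous identity holds for $\nabla z$, involving $\nabla u = \ure^{-\xi_u w}(\nabla a - \xi_u a\nabla w)$. The $L^\infty$ bounds from Lemma~\ref{lm:w_z_linfty} and Lemma~\ref{lm:a_b_linfty}, together with the continuity of $f'$ and $g'$ on $[0,\infty)$, uniformly bound $f'(v), g'(u), \ure^{\pm\xi_v z}, \ure^{\pm\xi_u w}, a, b$ by constants depending only on $L$, $M$ and the fixed data.

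Next, I would test the $\nabla w$-equation against $|\nabla w|^{p-2}\nabla w$ and estimate the source-term contribution via Young's inequality, absorbing a fraction of the resulting $|\nabla w|^p$-mass into the dissipative term coming from $-\nabla w$ on the left. Together with the analogous computation for $\nabla z$, this yields inequalities of the form
\begin{align*}
  \ddt \intom |\nabla w|^p + \intom |\nabla w|^p \le C\left(\intom |\nabla b|^p + \intom |\nabla z|^p\right)
\end{align*}
and the corresponding one with $\nabla z$, $\nabla a$ and $\nabla w$ in the analogous roles. Summing and writing $\phi(t) \defs \intom |\nabla w(\cdot, t)|^p + \intom |\nabla z(\cdot, t)|^p$ produces a differential inequality of the shape $\phi' \le C\phi + C\bigl(\intom|\nabla a|^p + \intom|\nabla b|^p\bigr)$, whence Gronwall's inequality on $[0,T]$ with $T \le T_0$ delivers the desired bound, the factor $\ure^{CT_0}$ being absorbed into $\gc{nabla_w_z_lp_cmp}$.

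The main difficulty is precisely the absence of parabolic smoothing in the $w$- and $z$-equations: there is no mechanism to convert an $L^p$-bound on the solution itself into an $L^p$-bound on its spatial gradient, so every spatial derivative of $w$ and $z$ must be tracked through the chain rule back to $\nabla a$, $\nabla b$ and the initial data. The mutual coupling---$\nabla v$ carries $\nabla z$ into the $\nabla w$-equation and $\nabla u$ carries $\nabla w$ into the $\nabla z$-equation---is precisely why both gradient equations have to be treated simultaneously in a single Gronwall step rather than sequentially, and it is also why the finite time horizon $T_0$ enters the constant via $\ure^{CT_0}$: the purely linear dissipation $-\nabla w$ in the ODE cannot dominate the time-integrated source term uniformly over an unbounded time interval.
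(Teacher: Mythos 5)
Your proposal is correct and follows essentially the same route as the paper: both exploit the purely temporal (damped ODE) structure of the $w$- and $z$-equations, control $\nabla f(b\ure^{-\xi_v z})$ and $\nabla g(a\ure^{-\xi_u w})$ via the chain rule together with the $L^\infty$ bounds of Lemma~\ref{lm:w_z_linfty} and Lemma~\ref{lm:a_b_linfty}, and close the coupled inequalities for $\intom|\nabla w|^p+\intom|\nabla z|^p$ by Gronwall on the finite horizon $T_0$. The only (inessential) difference is that you obtain the differential inequality by testing the spatially differentiated equation with $|\nabla w|^{p-2}\nabla w$, whereas the paper reads the same information off the variation-of-constants formula $w(\cdot,t)=\ure^{-t}w_0+\int_0^t\ure^{-(t-s)}f(b\ure^{-\xi_v z})(\cdot,s)\ds$; like the paper's own argument, your bound naturally carries $\intom|\nabla w_0|^p+\intom|\nabla z_0|^p$ rather than the zeroth-order initial terms displayed in \eqref{eq:nabla_w_z_lp_cmp:statement}.
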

\begin{proof}
  We again fix initial data satisfying \eqref{eq:bdd_M} and the solution $(a, b, w, z)$ of \eqref{prob:transformed} given by Lemma~\ref{lm:local_ex}.
  By Lemma~\ref{lm:a_b_linfty}, we can find $\gc{a_b_linfty} > 0$ such that $a, b \le \gc{a_b_linfty} M$ in $\Omega \times (0, T)$.
  Since according to the variation-of-constants formula we may write $w(x, t) = \ure^{-t} w_0(x) + \int_0^t \ure^{-(t-s)} f((b \ure^{-\xi_v z})(x, s)) \ds$ for $(x, t) \in \Omega \times (0, \tmax)$,
  we have
  \begin{align*}
          \intom |\nabla w(\cdot, t)|^p
    &\le  2^{p-1} \intom |\ure^{-t} \nabla w_0|^p
          + 2^{p-1} \intom \int_0^t \ure^{-(t-s)} |\nabla (f((b \ure^{-\xi_v z})(\cdot, s)))|^p \ds \\
    &\le  2^{p-1} \intom |\nabla w_0|^p
          + 2^{p-1} \|f'\|_{C^0([0, \gc{a_b_linfty} M])} \intnstom |\nabla (b \ure^{-\xi_v z})|^p
    \qquad \text{for $t \in (0, T)$}.
  \end{align*}
  As
  \begin{align*}
          |\nabla (b \ure^{-\xi_v z})|^p
    &\le  2^{p-1} \ure^{-\xi_v pz} |\nabla b|^p + 2^{p-1} \xi_v^p b^p \ure^{-\xi_v pz} |\nabla z|^p \\
    &\le  2^{p-1} |\nabla b|^p + 2^{p-1} \xi_v^p \gc{a_b_linfty}^p M^p |\nabla z|^p
    \qquad \text{in $\Omega \times (0, T)$},
  \end{align*}
  and together with an analogous argumentation for $\intom |\nabla z(\cdot, t)|^p$,
  we can conclude
  \begin{align*}
          \intom |\nabla w(\cdot, t)|^p + \intom |\nabla z(\cdot, t)|^p
    &\le  \newlc{gronwall_1} \left( \intom |\nabla w_0|^p + \intom |\nabla z_0|^p + \intnstom |\nabla a|^p + \intnstom |\nabla b|^p \right) \\
    &\pe  + \newlc{gronwall_2} \int_0^t \left( \intom |\nabla w|^p + \intom |\nabla z|^p \right)
  \end{align*}
  for $t \in (0, T)$,
  where
  \begin{align*}
    \lc{gronwall_1} &\defs  \max\left\{
      2^{p-1},
      2^{2p-2} \|f'\|_{C^0([0, \gc{a_b_linfty} M])},
      2^{2p-2} \|g'\|_{C^0([0, \gc{a_b_linfty} M])}
    \right\}
  \intertext{and}
    \lc{gronwall_2} &\defs
      2^{2p-2} \gc{a_b_linfty}^p M^p
      \max\left\{ \xi_v^p \|f'\|_{C^0([0, \gc{a_b_linfty} M])}, \xi_u^p \|g'\|_{C^0([0, \gc{a_b_linfty} M])} \right\}.
  \end{align*}
  Thus, Gronwall's inequality asserts the statement for $\gc{nabla_w_z_lp_cmp} \defs \lc{gronwall_1} \ure^{\lc{gronwall_2} T_0}$.
\end{proof}

In order to show that the right-hand side in \eqref{eq:nabla_w_z_lp_cmp:statement} is bounded for $p=4$,
we consider the evolution of the function $\frac12 (\intom |\nabla a|^2 + \intom |\nabla b|^2)$.
As it turns out, however, we can only control its time derivative on small timescales
and thus aim to derive estimates in $(t_0, \tmax)$ for $t_0$ close to $\tmax$ (which may be assumed to be finite for the sake of contradiction) only.
To that end, it is crucial that Lemma~\ref{lm:a_b_linfty} provides $L^\infty$ estimates for $a$ and $b$ in terms of the $\leb\infty$ norm of $a_0$ and $b_0$,
as we then may apply Lemma~\ref{lm:nabla_w_z_lp_cmp} to the solution with initial data $(a, b, w, z)(\cdot, t_0)$
without worrying about the dependency of the constant $\gc{nabla_w_z_lp_cmp}$ thereby obtained on $t_0$.
\begin{lemma}\label{lm:nabla_w_z_l4_total}
  Assume \eqref{eq:main_ass} as well as \eqref{eq:bdd_M} for some $L, M > 0$ and denote the solution of \eqref{prob:transformed} given by Lemma~\ref{lm:local_ex} by $(a, b, w, z)$.
  Moreover, let $T \in (0, \tmax] \cap (0, \infty)$.
  Then there exists $\newgc{nabla_w_z_l4_total} > 0$ such that
  \begin{align}\label{eq:nabla_w_z_l4_total:statement}
        \intom |\nabla w(\cdot, t)|^4 + \intom |\nabla z(\cdot, t)|^4
    \le \gc{nabla_w_z_l4_total}
    \qquad \text{for $t \in (0, T)$}.
  \end{align}
\end{lemma}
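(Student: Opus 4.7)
The plan is to derive a closed differential inequality for $y(t) \defs \intom |\nabla a(\cdot, t)|^2 + \intom |\nabla b(\cdot, t)|^2$ by combining a testing argument for the first two equations in \eqref{prob:transformed} with Lemma~\ref{lm:nabla_w_z_lp_cmp} at $p = 4$. Expanding $\ure^{\xi_u w}\nabla\cdot(\ure^{-\xi_u w}\nabla a) = \Delta a - \xi_u\nabla w\cdot\nabla a$, testing the resulting first equation against $-\Delta a$ (which is permissible since $\partial_\nu a = 0$), proceeding symmetrically for $b$, and then inserting the $L^\infty$ bounds from Lemma~\ref{lm:w_z_linfty} and Lemma~\ref{lm:a_b_linfty}, I expect repeated uses of Young's inequality to produce an estimate of the form
\begin{equation*}
  y'(t) + \eta\intom\bigl(|\Delta a|^2 + |\Delta b|^2\bigr)
  \le c_1 + c_1\intom\bigl(|\nabla w|^2 + |\nabla z|^2\bigr)\bigl(|\nabla a|^2 + |\nabla b|^2\bigr),
\end{equation*}
for some $\eta, c_1 > 0$ depending only on the parameters, $M$ and $L$.

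To deal with the cross term on the right I would exploit that since $n \le 3$ and $a, b \le \gc{a_b_linfty}M$ uniformly, the Gagliardo--Nirenberg inequality together with standard elliptic regularity for Neumann data gives $\|\nabla a\|_{\leb4}^2 \le c_2\bigl(\|\Delta a\|_{\leb2} + 1\bigr)$ and likewise for $b$. Hölder's inequality then yields $\intom|\nabla w|^2|\nabla a|^2 \le \|\nabla w\|_{\leb4}^2\|\nabla a\|_{\leb4}^2 \le c_2\|\nabla w\|_{\leb4}^2\bigl(\|\Delta a\|_{\leb2} + 1\bigr)$, and one more split by Young's inequality absorbs the $\|\Delta a\|_{\leb2}$ factor into $\tfrac{\eta}{2}\|\Delta a\|_{\leb2}^2$, leaving a $\|\nabla w\|_{\leb4}^4$ contribution. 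Treating the three remaining cross terms analogously refines the differential inequality to
\begin{equation*}
  y'(t) + \tfrac{\eta}{2}\intom\bigl(|\Delta a|^2 + |\Delta b|^2\bigr) \le c_3 + c_3\Bigl(\intom|\nabla w|^4 + \intom|\nabla z|^4\Bigr).
\end{equation*}

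To close the loop I would invoke Lemma~\ref{lm:nabla_w_z_lp_cmp} with $p = 4$ and $T_0 = T$; the resulting constant depends only on $M$, $L$ and $T$, since the uniform bounds of Lemma~\ref{lm:w_z_linfty} and Lemma~\ref{lm:a_b_linfty} are the only data-dependent quantities entering there. This yields
\begin{equation*}
  \intom|\nabla w(\cdot, t)|^4 + \intom|\nabla z(\cdot, t)|^4 \le c_4 + c_4\intnstom\bigl(|\nabla a|^4 + |\nabla b|^4\bigr)
\end{equation*}
for $t \in (0, T)$. A second Gagliardo--Nirenberg interpolation supplies $\intom|\nabla a|^4 \le c_5\bigl(\intom|\Delta a|^2 + 1\bigr)$ and similarly for $b$, so substituting back reduces everything to an inequality of Gronwall type for $Y(t) \defs \intnstom(|\Delta a|^2 + |\Delta b|^2)$, namely $Y(t) \le c_6 + c_6\intnst Y(s)\ds$. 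Gronwall's lemma then bounds $Y$ on $(0, T)$, hence $y$ as well, and plugging this back into the estimate for $\intom|\nabla w|^4 + \intom|\nabla z|^4$ yields \eqref{eq:nabla_w_z_l4_total:statement}. The main obstacle is precisely this circular dependency between gradient bounds on $a, b$ and on $w, z$; it is resolved by the Gagliardo--Nirenberg step, which crucially uses $n \le 3$ and the $L^\infty$ control from Lemma~\ref{lm:a_b_linfty}, and which trades each $\|\nabla a\|_{\leb4}^2$ factor for $\|\Delta a\|_{\leb2}$ plus lower-order terms, so that after one Gronwall iteration everything is integrable in time.
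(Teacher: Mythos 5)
Your proposal is correct, and its main estimates coincide with the paper's: testing the first two equations of \eqref{prob:transformed} with $-\Delta a$ and $-\Delta b$, invoking the Gagliardo--Nirenberg inequality \eqref{eq:nabla_w_z_l4_total:gni} together with the $L^\infty$ bounds of Lemma~\ref{lm:w_z_linfty} and Lemma~\ref{lm:a_b_linfty} to trade $\|\nabla a\|_{\leb4}^2$ for $\|\Delta a\|_{\leb2}$, and feeding $\|\nabla w\|_{\leb4}^4+\|\nabla z\|_{\leb4}^4$ back through Lemma~\ref{lm:nabla_w_z_lp_cmp} with $p=4$. Where you genuinely differ is in how the resulting loop is closed. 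You apply Lemma~\ref{lm:nabla_w_z_lp_cmp} once, on the whole interval with $T_0=T$, and then close via Gronwall for $Y(t)=\intnstom(|\Delta a|^2+|\Delta b|^2)$; this works because the bad term enters only through its time integral, and it yields a constant growing exponentially in $T$ -- which is all the lemma asks for, since its constant may depend on $T$ and on the fixed solution, and its only use (Lemma~\ref{lm:global_ex}) is on the finite interval $(0,\tmax)$ under the contradiction assumption $\tmax<\infty$. The paper instead avoids Gronwall at this stage: it chooses a terminal window $(t_0,T)$ of length at most $T-t_0$ as in \eqref{eq:nabla_w_z_l4_total:def_t0}, applies Lemma~\ref{lm:nabla_w_z_lp_cmp} to the solution \emph{restarted} at $t_0$ (with $(a,b,w,z)(\cdot,t_0)$ as new initial data, which is why the dependence of the earlier constants on $M$ and $L$ is tracked so carefully), and absorbs the cross contribution into the dissipation because the window is short; the interval $[0,t_0]$ is then handled trivially by continuity of $\nabla w,\nabla z$ up to $t_0<\tmax$. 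Your route is somewhat simpler in that it needs neither the restart device nor the smallness of $T-t_0$ (and, for this lemma alone, not even the uniform-in-data form of the earlier constants), at the price of a cruder, exponentially $T$-dependent constant; both give exactly what is needed for the globality argument. Two cosmetic points: the cross terms in your first display are really only $|\nabla w|^2|\nabla a|^2$ and $|\nabla z|^2|\nabla b|^2$ (your symmetric product is a harmless overestimate), and the Neumann boundary condition enters through the elliptic estimate $\|D^2 a\|_{\leb2}\le C(\|\Delta a\|_{\leb2}+\|a\|_{\leb2})$, exactly as encoded in the paper's version \eqref{eq:nabla_w_z_l4_total:gni}.
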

\begin{proof}
  It follows from the Gagliardo--Nirenberg inequality (that is, from \cite{NirenbergEllipticPartialDifferential1959}; or, more directly, from \cite[Lemma~A.3]{FuestGlobalSolutionsHomogeneous2020}))
  that there is $\newlc{gni} > 0$ such that
  \begin{align}\label{eq:nabla_w_z_l4_total:gni}
    \frac{\max\{D_u, D_v\}}{\min\{D_u, D_v\}} \intom |\nabla \varphi|^4 \le \lc{gni}  \left( \intom |\Delta \varphi|^2 \right) \|\varphi\|_{\leb\infty}^2 + \lc{gni} \|\varphi\|_{\leb\infty}^4
  \end{align}
  for all $\varphi \in \con2$ with $\partial_\nu \varphi = 0$ on $\partial \Omega$.
  Moreover, we let $\gc{w_z_linfty}$ and $\gc{a_b_linfty}$ be as given by Lemma~\ref{lm:w_z_linfty} and Lemma~\ref{lm:a_b_linfty}, respectively.
  Then \eqref{eq:nabla_w_z_l4_total:gni} and Lemma~\ref{lm:a_b_linfty} imply
  \begin{align}\label{eq:nabla_w_z_l4_total:gni2}
        \intom |\nabla a|^4 + \intom |\nabla b|^4
    \le C_3^2 M^2 \lc{gni} \left(\intom |\Delta a|^2 + \intom |\Delta b|^2 \right)
        + 2 C_3^4 M^4\lc{gni}
  \end{align}
  Next, we apply Lemma~\ref{lm:nabla_w_z_lp_cmp} (with $M = \gc{a_b_linfty} M$, $L = \gc{w_z_linfty}$, $T_0 = 1$ and $p = 4$) to obtain $\widehat{\gc{nabla_w_z_lp_cmp}} > 0$
  with the property that whenever a quadruple of functions $(\widehat{a_0}, \widehat{b_0}, \widehat{w_0}, \widehat{z_0})$ satisfies \eqref{eq:bdd_M} with $M$ replaced by $\gc{a_b_linfty} M$
  (and $(a_0, b_0, w_0, z_0)$ replaced by $(\widehat{a_0}, \widehat{b_0}, \widehat{w_0}, \widehat{z_0})$),
  then the corresponding solution $(\widehat a, \widehat b, \widehat w, \widehat z)$ of \eqref{prob:transformed} with maximal existence time $\widehat T_{\mathrm{max}}$ given by Lemma~\ref{lm:local_ex} fulfills
  \begin{align*}
        \intom |\nabla \widehat w(\cdot, t)|^4 + \intom |\nabla \widehat z(\cdot, t)|^4
    \le \widehat{\gc{nabla_w_z_lp_cmp}} \left( \intom (\widehat{w_0})^4 + \intom (\widehat{z_0})^4 + \intnstom |\nabla \widehat a|^4 + \intnstom |\nabla \widehat b|^4 \right)
  \end{align*}
  for all $t \in (0, \max\{1, \widehat T_{\mathrm{max}}\})$.
  Setting $D \defs \min\{D_u, D_v\}$, $\chi \defs \max\{\chi_u, \chi_v\}$, $\xi \defs \frac{\chi}{D}$,
  \begin{align}\label{eq:nabla_w_z_l4_total:def_t0}
    t_0 \defs \max\left\{0, T - \frac{1}{16\xi^4\gc{a_b_linfty}^4 \widehat{\gc{nabla_w_z_lp_cmp}} M^4 \lc{gni}^2}, T - 1\right\}
  \end{align}
  as well as $\newlc{nabla_w_z_l4_cmp} \defs \widehat{\gc{nabla_w_z_lp_cmp}} \left( \intom w^4(\cdot, t_0) + \intom z^4(\cdot, t_0) \right)$
  and recalling that classical solutions of \eqref{prob:transformed} are unique by Lemma~\ref{lm:local_ex},
  we can conclude
  \begin{align}\label{eq:nabla_w_z_l4_total:nabla_w'_z'_l4}
        \intom |\nabla w(\cdot, t)|^4 + \intom |\nabla z(\cdot, t)|^4
    \le \lc{nabla_w_z_l4_cmp} + \widehat{\gc{nabla_w_z_lp_cmp}} \left( \int_{t_0}^t \intom |\nabla a|^4 + \int_{t_0}^t \intom |\nabla b|^4 \right)
    \qquad \text{for all $t \in (t_0, T)$}.
  \end{align}
  Since $D_u \ure^{\xi_u w} \nabla \cdot (\ure^{-\xi_u w} \nabla a) = D_u \Delta a - \chi_u \nabla a \cdot \nabla w$,
  testing the equation for $a$ with $-\Delta a$ and applying Young's inequality thrice 
  gives
  \begin{align*}
          \frac12 \ddt \intom |\nabla a|^2
    &=    - D_u \intom |\Delta a|^2
          + \chi_u \intom (\nabla a \cdot \nabla w) \Delta a
          - \xi_u \intom (-aw + a f(b\ure^{-\xi_v z})) \Delta a \\
    &\le  - \frac{D_u}{2} \intom |\Delta a|^2
          + \chi \xi \intom |\nabla a|^2 |\nabla w|^2
          + \frac{\xi^2}{D} \intom a^2 (w + f(b\ure^{-\xi_v z}))^2 \\
    &\le  - \frac{D_u}{2} \intom |\Delta a|^2
          + \frac{D}{8\gc{a_b_linfty}^2 M^2\lc{gni}} \intom |\nabla a|^4
          + 2 \xi^4 D \gc{a_b_linfty}^2 M^2\lc{gni}  \intom |\nabla w|^4
          + \lc{zeroth_order_a}
    \qquad \text{in $(0, T)$},
  \end{align*}
  where $\newlc{zeroth_order_a} \defs \frac{\gc{a_b_linfty}^2 M^2 \xi^2}{D} (\gc{w_z_linfty} + \|f\|_{L^\infty((0, \infty))})^2 |\Omega|$.
  By integrating in time from $t_0$ to $t$ and noting that $T-t_0 \le 1$, we obtain
  \begin{align}\label{eq:nabla_w_z_l4_total:nabla_a_l2}
    &\pe  \intom |\nabla a(\cdot, t)|^2
          - \intom |\nabla a(\cdot, t_0)|^2
          + D_u \inttnstom  |\Delta a|^2 \notag \\
    &\le  \frac{D}{4\gc{a_b_linfty}^2M^2\lc{gni}} \inttnstom |\nabla a|^4
          + 4 \xi^4 D \gc{a_b_linfty}^2M^2\lc{gni} (t-t_0) \sup_{s \in (t_0, t)} \intom |\nabla w(\cdot, s)|^4
          + 2 \lc{zeroth_order_a} 
  \end{align}
  for all $t \in (t_0, T)$.
  Likewise, there is $\newlc{zeroth_order_b} > 0$ such that
  \begin{align}\label{eq:nabla_w_z_l4_total:nabla_b_l2}
    &\pe  \intom |\nabla b(\cdot, t)|^2
          - \intom |\nabla b(\cdot, t_0)|^2
          + D_v \inttnstom |\Delta b|^2 \notag \\
    &\le  \frac{D}{4\gc{a_b_linfty}^2M^2\lc{gni}} \inttnstom |\nabla b|^4
          + 4 \xi^4 D \gc{a_b_linfty}^2M^2\lc{gni} (t-t_0) \sup_{s \in (t_0, t)} \intom |\nabla z(\cdot, s)|^4
          + 2\lc{zeroth_order_b} 
  \end{align}
  for all $t \in (t_0, T)$.
  By \eqref{eq:nabla_w_z_l4_total:nabla_a_l2}, \eqref{eq:nabla_w_z_l4_total:nabla_b_l2}, \eqref{eq:nabla_w_z_l4_total:nabla_w'_z'_l4}, \eqref{eq:nabla_w_z_l4_total:def_t0} and \eqref{eq:nabla_w_z_l4_total:gni2},
  we therefore have
  \begin{align*}
    &\pe  \intom |\nabla a(\cdot, t)|^2 + \intom |\nabla b(\cdot, t)|^2
          - \intom |\nabla a(\cdot, t_0)|^2 - \intom |\nabla b(\cdot, t_0)|^2
          + D_u \intnstom |\Delta a|^2 + D_v \intnstom |\Delta b|^2 \\
    &\le  \frac{D}{4\gc{a_b_linfty}^2M^2\lc{gni}} \left( \inttnstom |\nabla a|^4 + \inttnstom |\nabla b|^4 \right) \\
    &\pe  + 4\xi^4 D \gc{a_b_linfty}^2M^2\lc{gni} (t-t_0) \sup_{s \in (t_0, t)} \left( \intom |\nabla w(\cdot, s)|^4 + \intom |\nabla z(\cdot, s)|^4 \right)
          + 2(\lc{zeroth_order_a} + \lc{zeroth_order_b}) \\
    &\le  \underbrace{\left( \frac{D}{4\gc{a_b_linfty}^2M^2\lc{gni}} + 4 \xi^4 D\gc{a_b_linfty}^2\widehat{\gc{nabla_w_z_lp_cmp}} M^2 \lc{gni} (T-t_0) \right)}
            _{\le D/(2\gc{a_b_linfty}^2M^2\lc{gni})}
            \left( \inttnstom |\nabla a|^4 + \inttnstom |\nabla b|^4 \right)
          + \newlc{coef_t_1} \\
    &\le  D_u \inttnstom |\Delta a|^2 + D_v \inttnstom |\Delta b|^2
          - \frac{D}{2\gc{a_b_linfty}^2M^2\lc{gni}} \left( \inttnstom |\nabla a|^4 + \inttnstom |\nabla b|^4 \right)
          + \newlc{coef_t_2}
  \end{align*}
  for $t \in (t_0, T)$,
  where $\lc{coef_t_1} \defs 4\xi^4 D \gc{a_b_linfty}^2M^2\lc{gni} \lc{nabla_w_z_l4_cmp} + 2\lc{zeroth_order_a} + 2\lc{zeroth_order_b}$
  and $\lc{coef_t_2} \defs \lc{coef_t_1} + 2D\gc{a_b_linfty}^2M^2$.
  By Beppo Levi's theorem, this implies
  \begin{align*}
        \inttntom |\nabla a|^4 + \inttntom |\nabla b|^4
    \le \frac{2\gc{a_b_linfty}^2M^2\lc{gni}}{D} \left( \intom |\nabla a(\cdot, t_0)|^2 + \intom |\nabla b(\cdot, t_0)|^2 + \lc{coef_t_2} \right).
  \end{align*}
  Another application of Lemma~\ref{lm:nabla_w_z_lp_cmp} then shows that the desired estimate \eqref{eq:nabla_w_z_l4_total:statement} holds for all $t \in (t_0, T)$ and some $\gc{nabla_w_z_l4_total} > 0$,
  while the inclusions $w, z \in C^1(\Ombar \times [0, t_0])$ trivially entail \eqref{eq:nabla_w_z_l4_total:statement} also for $t \in [0, t_0]$ (possibly after enlarging $\gc{nabla_w_z_l4_total}$).
\end{proof}

\subsection{Solutions are global in time: proof of Theorem~\ref{th:main} and Corollary~\ref{cor:ex_small_data}}
With Lemma~\ref{lm:nabla_w_z_l4_total} at hand, globality in time can be shown as in \cite[Lemma 2.14]{FuestEtAlGlobalExistenceClassical2022} (or \cite[Lemma~2.2]{TaoWinklerEnergytypeEstimatesGlobal2014}):
Parabolic regularity theory rapidly upgrades the bounds implied by \eqref{eq:w_z_linfty:statement}, \eqref{eq:a_b_linfty:statement} and \eqref{eq:nabla_w_z_l4_total:statement} to Hölder estimates.
\begin{lemma}\label{lm:global_ex}
  Assume \eqref{eq:main_ass} as well as \eqref{eq:bdd_M} for some $L, M > 0$.
  Then the solution $(a, b, w, z)$ given by Lemma~\ref{lm:local_ex} is global in time; that is, $\tmax = \infty$. 
\end{lemma}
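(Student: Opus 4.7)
The plan is to argue by contradiction. Suppose $\tmax < \infty$; by the extensibility criterion \eqref{eq:local_ex:ex_crit}, it suffices to produce some $\tilde\alpha \in (0,1)$ for which
\[
  \sup_{t \in (0, \tmax)} \bigl( \|a(\cdot, t)\|_{\con{1+\tilde\alpha}} + \|b(\cdot, t)\|_{\con{1+\tilde\alpha}} \bigr) < \infty,
\]
since this directly contradicts the divergence in \eqref{eq:local_ex:ex_crit}. As inputs I already have uniform bounds $a,b,w,z \in L^\infty(\Omega \times (0, \tmax))$ from Lemmata~\ref{lm:w_z_linfty} and~\ref{lm:a_b_linfty}, together with a uniform-in-time $\leb 4$ bound for $\nabla w$ and $\nabla z$ from Lemma~\ref{lm:nabla_w_z_l4_total}.

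I would then rewrite the first equation of \eqref{prob:transformed} in non-divergence form,
\[
  a_t = D_u \Delta a - \chi_u \nabla w \cdot \nabla a + \xi_u a\bigl(-w + f(b\ure^{-\xi_v z})\bigr), \qquad \partial_\nu a = 0 \text{ on } \partial\Omega,
\]
and its counterpart for $b$, reading them as linear parabolic problems with zero-order source bounded in $L^\infty(\Omega \times (0,\tmax))$ and drift coefficient bounded in $L^\infty((0,\tmax); \leb 4)$. To upgrade the integrability, it is convenient to pass back to the original variable $u = a \ure^{-\xi_u w}$, write $u_t = D_u \Delta u + \chi_u \nabla \cdot (u \nabla w)$ in mild form via the Neumann heat semigroup, and apply the standard smoothing estimate $\|\nabla \cdot \ure^{s D_u \Delta}\varphi\|_{\leb p} \le c s^{-\frac12 - \frac{n}{2}(\frac1q - \frac1p)} \|\varphi\|_{\leb q}$. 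Starting from $u \nabla w \in L^\infty((0,\tmax); \leb 4)$ and using that, for $n \le 3$, the time integrals $\int_0^t s^{-\frac12 - \frac{n}{2}(\frac14 - \frac1p)} \ds$ stay finite up to any $p < \infty$, this improves $u$ and $v$ to uniform bounds in $\leb p$ for all finite $p$. Feeding these bounds back into (an iterated version of) Lemma~\ref{lm:nabla_w_z_lp_cmp}, in conjunction with parabolic $L^p$-regularity applied to the equation for $a$ to bound $\nabla a$ in suitable mixed norms, progressively pushes $\nabla w$ and $\nabla z$ into $\leb p$ for arbitrarily large $p$, and in particular into some $\leb p$ with $p > n$.

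At that stage the drift coefficient $\chi_u \nabla w$ lies in $L^\infty((0,\tmax); \leb p)$ with $p > n$, so classical parabolic Hölder estimates of Ladyzhenskaya--Solonnikov--Ural'ceva type applied to the linearised problem for $a$ give a uniform bound on $\|a\|_{C^{1+\tilde\alpha, (1+\tilde\alpha)/2}(\Ombar \times [0, \tmax])}$ for some $\tilde\alpha \in (0,1)$, and analogously for $b$, yielding the desired contradiction. The main obstacle is the careful bookkeeping inherent to this bootstrap: Lemma~\ref{lm:nabla_w_z_l4_total} delivers exactly the borderline exponent at which, for $n=3$, the semigroup convolution integrals are still integrable, so one has to interleave the gains in integrability of $\nabla a, \nabla b$ with those of $\nabla w, \nabla z$ through finitely many cycles before Hölder theory can be triggered. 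The authors' references to \cite[Lemma~2.14]{FuestEtAlGlobalExistenceClassical2022} and \cite[Lemma~2.2]{TaoWinklerEnergytypeEstimatesGlobal2014} indicate that this cascade is by now standard.
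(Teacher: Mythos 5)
Your proposal is correct in outline and follows essentially the same strategy as the paper: assume $\tmax<\infty$, combine the $L^\infty$ bounds of Lemmata~\ref{lm:w_z_linfty} and \ref{lm:a_b_linfty} with the $\leb4$ gradient bounds of Lemma~\ref{lm:nabla_w_z_l4_total}, shuttle regularity between $(\nabla a,\nabla b)$ and $(\nabla w,\nabla z)$ via parabolic regularity and Lemma~\ref{lm:nabla_w_z_lp_cmp}, and conclude with H\"older bounds for $a$ and $b$ that contradict \eqref{eq:local_ex:ex_crit}. The difference lies in the technical vehicle: the paper avoids your open-ended iteration by invoking a mixed-norm consequence of maximal Sobolev regularity (\cite[Lemma~2.13]{FuestEtAlGlobalExistenceClassical2022}), which from bounded $a,b,w,z$ and $\nabla w,\nabla z\in L^\infty((0,\tmax);\leb4)$ directly yields $\nabla a,\nabla b\in L^{12}((0,\tmax);\leb\infty)$; Lemma~\ref{lm:nabla_w_z_lp_cmp} then upgrades $\nabla w,\nabla z$ to $L^\infty((0,\tmax);\leb{12})$, a second application of maximal regularity gives $a_t,\Delta a,b_t,\Delta b\in L^{12}(\Omega\times(0,\tmax))$, and the embedding \cite[Lemma~II.3.3]{LadyzenskajaEtAlLinearQuasilinearEquations1988} produces a bound in $C^{19/12,19/24}(\Ombar\times[0,\tmax])$ --- so only two regularity steps are needed rather than ``finitely many cycles''. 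Two minor imprecisions in your sketch: the semigroup step that ``improves $u$ and $v$ to uniform $\leb p$ bounds'' is redundant, since Lemma~\ref{lm:a_b_linfty} already gives uniform $L^\infty$ bounds for $a,b$ and hence for $u,v$; and the stopping criterion ``$\nabla w\in\leb p$ with $p>n$'' is already met by Lemma~\ref{lm:nabla_w_z_l4_total} because $n\le 3$, while what the final gradient-H\"older step actually requires is the stronger mixed-norm control of the drift and of $\nabla a,\nabla b$ that the bootstrap must first produce --- your ``finitely many cycles'' remark acknowledges this, but that bookkeeping is precisely where the paper's quoted maximal-regularity lemma does the work, so you would need to carry it out (or cite an analogous result) to make the argument complete.
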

\begin{proof}
  Suppose $\tmax < \infty$.
  We rewrite the first two equations in \eqref{prob:transformed} as
  \begin{align*}
    a_t = D_u \Delta a - \chi_u \nabla a \cdot \nabla w + \psi_a
    \quad \text{and} \quad
    b_t = D_v \Delta b - \chi_v \nabla b \cdot \nabla z + \psi_b
    \qquad \text{in $\Omega \times (0, \infty)$},
  \end{align*}
  where
  \begin{align*}
    \psi_a = -\xi_u aw + \xi_u a f(b \ure^{-\xi_v z})
    \quad \text{and} \quad
    \psi_b = -\xi_v bz + \xi_v b g(a \ure^{-\xi_u w}).
  \end{align*}
  As $a, b, w, z$ belong to $L^\infty(\Omega \times (0, \tmax))$ by Lemma~\ref{lm:a_b_linfty} and Lemma~\ref{lm:w_z_linfty}, so do $\psi_1, \psi_2$.
  Also recalling \eqref{eq:local_ex_init_reg} and Lemma~\ref{lm:nabla_w_z_l4_total},
  we may thus apply (a consequence of) maximal Sobolev regularity (cf.\ \cite[Lemma~2.13]{FuestEtAlGlobalExistenceClassical2022}) to obtain $\newlc{max_sob_reg_12_3} > 0$ such that
  \begin{align*}
    \|\nabla a\|_{L^{12}((0, \tmax); \leb\infty)} + \|\nabla b\|_{L^{12}((0, \tmax); \leb\infty)} \le \lc{max_sob_reg_12_3}.
  \end{align*}
  According to Lemma~\ref{lm:nabla_w_z_lp_cmp}, there then exists $\newlc{nabla_w_z_l12} > 0$ with
  \begin{align*}
    \|\nabla w\|_{L^\infty((0, \tmax); \leb{12})} + \|\nabla z\|_{L^\infty((0, \tmax); \leb{12})} \le \lc{nabla_w_z_l12}.
  \end{align*}
  This allows us to again invoke \cite[Lemma~2.13]{FuestEtAlGlobalExistenceClassical2022} to obtain $\newlc{max_sob_reg_12_12} > 0$ such that
  \begin{align*}
    \|a_t\|_{L^{12}(\Omega \times (0, \tmax))} + \|\Delta a\|_{L^{12}(\Omega \times (0, \tmax))} 
    + \|b_t\|_{L^{12}(\Omega \times (0, \tmax))} + \|\Delta b\|_{L^{12}(\Omega \times (0, \tmax))}
    \le \lc{max_sob_reg_12_12}.
  \end{align*}
  Thus, by \cite[Lemma~II.3.3]{LadyzenskajaEtAlLinearQuasilinearEquations1988} there is $\newlc{a_b_c10+alpha} > 0$ such that
  \begin{align*}
    \|a\|_{C^{\frac{19}{12}, \frac{19}{24}}(\Ombar \times [0, \tmax])} + \|b\|_{C^{\frac{19}{12}, \frac{19}{24}}(\Ombar \times [0, \tmax])} \le \lc{a_b_c10+alpha},
  \end{align*}
  which contradicts the extensibility criterion in Lemma~\ref{lm:local_ex}.
\end{proof}

Theorem~\ref{th:main} follows now easily from the lemmata above.
\begin{proof}[Proof of Theorem~\ref{th:main}]
  That the solution $(a, b, w, z)$ of \eqref{prob:transformed} constructed in Lemma~\ref{lm:local_ex} is global in time has been asserted in Lemma~\ref{lm:global_ex}.
  Upon setting $u \defs a \ure^{-\xi_u w}$ and $v \defs b \ure^{-\xi_v z}$, we also obtain a global classical solution of \eqref{prob}
  which due to Lemma~\ref{lm:a_b_linfty}, the evident estimates $u \le a$, $v \le b$ in $\Ombarinf$ and Lemma~\ref{lm:w_z_linfty} moreover fulfills \eqref{eq:main:bound} for some $C > 0$.
\end{proof}

Finally, we show that Theorem~\ref{th:main} allows for a quick proof of Corollary~\ref{cor:ex_small_data}; that is, of the existence of small data solutions for \eqref{prob} with $f(s) = g(s) = s$ for $s \ge 0$.
\begin{proof}[Proof of Corollary~\ref{cor:ex_small_data}]
  We let $\Omega$, $\alpha$ and $w_0, z_0$ be as in the statement of Corollary~\ref{cor:ex_small_data}.
  Moreover, we fix a nonnegative cutoff function $\zeta \in \con\infty$ with $\zeta(s) = s$ for $s \in [0, 1]$ and $\zeta \equiv 2$ in $[2, \infty)$.
  Then $\tilde f = \tilde g = \zeta$ fulfill \eqref{eq:intro:f_g}, so that for each $M > 0$,
  Theorem~\ref{th:main} provides us with a unique, global, bounded, nonnegative classical solution $(u, v, w, z)$ of \eqref{prob} (with $f$ and $g$ replaced by $\tilde f$ and $\tilde g$)
  and $C > 0$ such that \eqref{eq:main:bound} holds.
  In particular, if $M \le \frac{1}{C}$, then $u, v \le 1$ so that in that case $(u, v, w, z)$ also solves \eqref{prob} with $f(s) = g(s) = s$ for $s \ge 0$.
\end{proof}

\section{Smooth steady states}\label{sec:steady_states}
While for all $M_1, M_2 > 0$ the tuple $((M_1, M_2, f(M_2), g(M_1))$ forms a smooth steady state of \eqref{prob},
the conservation of mass for the first two solution components implies that these steady states may appear as limits only for the choices $M_1 = \ol u_0$ and $M_2 = \ol v_0$.
In Subsection~\ref{sec:conv_small_data}, we prove Theorem~\ref{th:conv}; that is, that this equilibrium indeed attracts solutions whenever the initial data are small.

The asymptotic behavior of solutions not covered by Theorem~\ref{th:conv} will be a main aspect of our numerical experiments performed in Section~\ref{simulation} below.
Analytically, we can at least rule out stabilization towards nonconstant smooth steady states: We show that there are no such equilibria in Subsection~\ref{sec:no_smooth_ss} .

\subsection{Convergence to homogeneous steady states for small data: proof of Theorem~\ref{th:conv}}\label{sec:conv_small_data}
Theorem~\ref{th:main} already contains a key step of the convergence proof, namely the fact that smallness of $u_0$ and $v_0$ implies smallness of $u$ and $v$ for all times.
With these estimates at hand, we can show that
\begin{align*}
  y \defs c \intom (u - \ol u_0)^2 + c \intom (v - \ol v_0)^2 + \intom (w - f(\ol v_0))^2 + \intom |\nabla w|^2 + \intom (z - g(\ol u_0))^2 + \intom |\nabla z|^2 
\end{align*}
(for suitably chosen $c > 0$) is a subsolution of a homogeneous ODE of the form $y' = -c' y$ for some $c' > 0$ and hence converges exponentially fast to $0$.
\begin{proof}[Proof of Theorem~\ref{th:conv}]
  We fix $\Omega$, $D_u$, $D_v$, $\chi_u$, $\chi_v$, $f$, $g$, $\alpha$, $w_0$ and $z_0$ as in the statement of Theorem~\ref{th:conv}.
  By the Poincar\'e inequality, there is $\newlc{poincare} > 0$ such that
  \begin{align}\label{eq:conv:poincare}
    \intom (\varphi - \ol \varphi)^2 \le \lc{poincare} \intom |\nabla \varphi|^2
    \qquad \text{for all $\varphi \in \sob12$},
  \end{align}
  and due to the assumptions on $f$ and $g$, there is $\newlc{f_g_c1_bdd} > 0$ such that
  \begin{align}\label{eq:conv:f'_g'_bdd}
    \|f'\|_{C^0([0, 1])} \le \lc{f_g_c1_bdd}
    \quad \text{and} \quad
    \|g'\|_{C^0([0, 1])} \le \lc{f_g_c1_bdd}.
  \end{align}
  As in the proof of Corollary~\ref{cor:ex_small_data} we fix $\zeta \in C^\infty(\Ombar)$ with $\zeta(s) = 1$ for $s \in [0, 1]$ and $\zeta \equiv 2$ in $[2, \infty)$.
  Since $\tilde f \defs \zeta f$ and $\tilde g \defs \zeta g$ fulfill \eqref{eq:intro:f_g},
  Theorem~\ref{th:main} asserts that there exists $\newlc{u_v_bdd} > 0$ such that for all $M > 0$ the following holds:
  If $u_0, v_0 \in \con{2+\alpha}$ are nonnegative and satisfy \eqref{eq:main:u0_v0} for some $M > 0$,
  there exists a global, nonnegative classical solution $(u, v, w, z)$ of \eqref{prob} (with $f, g$ replaced by $\tilde f, \tilde g$) with
  \begin{align}\label{eq:conv:u_v_bdd}
    \|u\|_{L^\infty(\Omega \times (0, \infty))} \le M \lc{u_v_bdd}
    \quad \text{and} \quad
    \|v\|_{L^\infty(\Omega \times (0, \infty))} \le M \lc{u_v_bdd}.
  \end{align}
  We choose $M > 0$ so small that
  \begin{align}\label{eq:conv:def_m}
    M^2 \lc{u_v_bdd}^2 \le \min\left\{\frac{1}{4 \lc{c_mult} \max\{\frac{\chi_u^2}{D_u}, \frac{\chi_v^2}{D_v}\}}, 1 \right\},
    \quad \text{where} \quad
    \newlc{c_mult} \defs \frac{3\lc{f_g_c1_bdd}^2 \max\{\lc{poincare}, 1\}}{\min\{D_u, D_v\}},
  \end{align}
  and fix nonnegative $u_0, v_0 \in \con{2+\alpha}$ fulfilling \eqref{eq:main:u0_v0} as well as the solution $(u, v, w, z)$ of \eqref{prob} (with $f, g$ replaced by $\tilde f, \tilde g$) given by Theorem~\ref{th:main}.
  We note that \eqref{eq:conv:u_v_bdd} and the second estimate contained in \eqref{eq:conv:def_m} imply $u, v \le 1$ in $\Ombarinf$.
  Thus, $(\tilde f(u), \tilde g(v)) = (f(u), g(v))$ in $\Ombarinf$ and \eqref{eq:conv:f'_g'_bdd} results in
  \begin{align}\label{eq:conv:f'_g'_bdd2}
    \max\{|f'(u)|,\, |g'(v)|\} \le \lc{f_g_c1_bdd}
    \qquad \text{in $\Ombarinf$}.
  \end{align}
  By testing the first equation in \eqref{prob} with $u - \ol u_0$ and making use of Young's inequality, \eqref{eq:conv:u_v_bdd}
  and \eqref{eq:conv:poincare} (we note that integrating the first equation implies $\intom u(\cdot, t) = \intom u_0$ for all $t \ge 0$, so that \eqref{eq:conv:poincare} is indeed applicable),
  we obtain 
  \begin{align*}
          \frac12 \ddt \intom (u - \ol u_0)^2
    &=    - D_u \intom |\nabla u|^2
          - \chi_u \intom u \nabla u \cdot \nabla w \\
    &\le  - \frac{3D_u}{4} \intom |\nabla u|^2
          + \frac{M^2 \lc{u_v_bdd}^2 \chi_u^2}{D_u} \intom |\nabla w|^2 \\
    &\le  - \frac{D_u}{2} \intom |\nabla u|^2
          - \frac{D_u}{4 \lc{poincare}} \intom (u - \ol u_0)^2
          + \frac{M^2 \lc{u_v_bdd}^2 \chi_u^2}{D_u}  \intom |\nabla w|^2
    \qquad \text{in $(0, \infty)$}.
  \end{align*}
  Moreover, testing the third equation in \eqref{prob}, $w_t = -w + f(v) = -(w - f(\ol v_0)) + f(v) - f(\ol v_0)$, with $w - f(\ol v_0)$
  and Young's inequality, the mean value theorem and \eqref{eq:conv:f'_g'_bdd2} yield
  \begin{align*}
          \frac12 \ddt \intom (w - f(\ol v_0))^2
    &=    - \intom (w - f(\ol v_0))^2 + \intom (f(v) - f(\ol v_0)) (w - f(\ol v_0)) \\
    &\le  - \frac12 \intom (w - f(\ol v_0))^2 + \frac{\lc{f_g_c1_bdd}^2}{2} \intom (v - \ol v_0)^2
    \qquad \text{in $(0, \infty)$},
  \end{align*}
  while testing the same equation with $-\Delta w$ gives
  \begin{align*}
        \frac12 \ddt \intom |\nabla w|^2
    &=  - \intom |\nabla w|^2
        + \intom f'(v) \nabla v \cdot \nabla w
    \le - \frac12 \intom |\nabla w|^2
        + \frac{\lc{f_g_c1_bdd}^2}{2} \intom |\nabla v|^2
    \qquad \text{in $(0, \infty)$}.
  \end{align*}
  Analogously,
  \begin{align*}
          \frac12 \ddt \intom (v - \ol v_0)^2
    &\le  - \frac{D_v}{2} \intom |\nabla v|^2
          - \frac{D_v}{4 \lc{poincare}} \intom (v - \ol v_0)^2
          + \frac{M^2 \lc{u_v_bdd}^2 \chi_v^2}{D_v} \intom |\nabla z|^2
  \end{align*}
  and
  \begin{align*}
          \frac12 \ddt \left( \intom (z - g(\ol u_0))^2 + \intom |\nabla z|^2 \right)
    &\le  - \frac12 \intom (z - g(\ol u_0))^2 + \frac{\lc{f_g_c1_bdd}^2}{2} \intom (u - \ol u_0)^2 
          - \frac12 \intom |\nabla z|^2 + \frac{\lc{f_g_c1_bdd}^2}{2} \intom |\nabla u|^2
  \end{align*}
  hold in $(0, \infty)$.
  Recalling \eqref{eq:conv:def_m}, we conclude that the function $y \colon [0, \infty) \to [0, \infty)$ defined by
  \begin{align*}
    y(t) \defs  
    \frac12 \left(
      \lc{c_mult} \intom (u - \ol u_0)^2
      + \lc{c_mult} \intom (v - \ol v_0)^2
      + \intom (w - f(\ol v_0))^2
      + \intom (z - g(\ol u_0))^2
      + \intom |\nabla w|^2
      + \intom |\nabla z|^2
    \right)
  \end{align*}
  for $t \ge 0$ fulfills
  \begin{align*}
          y'
    &\le  - \left( \frac{\min\{D_u, D_v\}\lc{c_mult}}{4\lc{poincare}} - \frac{\lc{f_g_c1_bdd}^2}{2} \right)  \left( \intom (u - \ol u_0)^2 + \intom (v - \ol v_0)^2 \right) \\
    &\pe  - \left( \frac{\min\{D_u, D_v\}\lc{c_mult}}{2} - \frac{\lc{f_g_c1_bdd}^2}{2} \right) \left( \intom |\nabla u|^2 + \intom |\nabla v|^2 \right)  \\
    &\pe  - \frac12 \left( \intom (w - f(\ol v_0))^2 + \intom (z - g(\ol u_0))^2 \right) \\
    &\pe  - \left( \frac{1}{2} - M^2 \lc{u_v_bdd}^2  \lc{c_mult} \max\left\{\frac{\chi_u^2}{D_u}, \frac{\chi_v^2}{D_v}\right\} \right) \left( \intom |\nabla w|^2 + \intom |\nabla z|^2 \right) \\
    &\le  - \lc{exponent} y
    \qquad \text{in $(0, \infty)$},
  \end{align*}
  where $\newlc{exponent} \defs \min\{\frac{\lc{f_g_c1_bdd}^2}{2\lc{c_mult}}, \frac12\}$.
  Thus, $y(t) \le \ure^{-\lc{exponent} t} y(0) \to  0$ for $t \to \infty$.
  This entails \eqref{eq:conv:uv} and \eqref{eq:conv:wz} for $p = 2$,
  upon which the statements for $p \in (2, \infty)$ follow from \eqref{eq:conv:u_v_bdd}
  and the interpolation inequality $\|\varphi\|_{\leb p} \le \|\varphi\|_{\leb \infty}^\frac{p-2}{p} \|\varphi\|_{\leb2}^\frac{2}{p}$, valid for all $\varphi \in \leb\infty$.
\end{proof}

\subsection{Lack of smooth heterogeneous steady states}\label{sec:no_smooth_ss}
We now show that all smooth steady states of \eqref{prob}, that is, solutions to \eqref{prob:ss}, are spatially homogeneous,
provided $f$ and $g$ belong to $\bigcup_{\eps > 0} C^\omega((-\eps, \infty); [0, \infty))$.
In a rather straightforward manner, this result can be extended to wider classes of functions $f$ and $g$,
but as the prototypical choices mentioned in the introduction are covered by Proposition~\ref{prop:no_hetero_ss},
we confine ourselves to analytical functions $f$ and $g$, for which the proof is particularly short.
\begin{proof}[Proof of Proposition~\ref{prop:no_hetero_ss}]
  Inserting the third and fourth equation in \eqref{prob:ss} into the first two and the last equations therein yields
  \begin{align*}
    \begin{cases}
      0 = \Delta u + \xi_u \nabla \cdot (u \nabla f(v)) & \text{in $\Omega$}, \\
      0 = \Delta v + \xi_v \nabla \cdot (v \nabla g(u)) & \text{in $\Omega$}, \\
      \partial_\nu u + \xi_u u \partial_\nu f(v) = 0    & \text{on $\partial \Omega$}, \\
      \partial_\nu v + \xi_v v \partial_\nu g(u) = 0    & \text{on $\partial \Omega$},
    \end{cases}
  \end{align*}
  where we have again set $\xi_u \defs \frac{\chi_u}{D_u}$ and $\xi_v \defs \frac{\chi_v}{D_v}$.
  Due to the supposed regularity of $u$ and $v$, \cite[Lemma~4.1]{BraukhoffLankeitStationarySolutionsChemotaxisconsumption2019} asserts that there are $\alpha, \beta \in \R$ such that
  \begin{align}\label{eq:no_hetero_ss:u_v_formula}
    u = \alpha \ure^{-\xi_u f(v)}
    \quad \text{and} \quad
    v = \beta \ure^{-\xi_v g(u)}
    \qquad \text{in $\Ombar$},
  \end{align}
  so that in particular
  \begin{align}\label{eq:no_hetero_ss:u_formula}
    u = \alpha \ure^{-\xi_u f(\beta \ure^{-\xi_v g(u)})}
    \qquad \text{in $\Ombar$}.
  \end{align}
  Both $(-\eps, \infty) \ni s \mapsto \alpha \ure^{-\xi_u f(\beta \ure^{-\xi_v g(s)})}$ and $(-\eps, \infty) \ni s \mapsto s$
  are analytical functions on $(-\eps, \infty)$ for some $\eps > 0$.
  As the former only vanishes at $0$ (and then everywhere) if $\alpha = 0$, these functions differ.
  Therefore, the set $S = \{\, s \in (-\eps, \infty) \mid s = \alpha \ure^{-\xi_u f(\beta \ure^{-\xi_v g(s)})} \,\}$ is discrete by the identity theorem.
  According to \eqref{eq:no_hetero_ss:u_formula}, the image of the continuous function $u \colon \Ombar \to \R$ is contained in $S$, which is only possible if $u$ is constant.
  Recalling \eqref{eq:no_hetero_ss:u_v_formula}, we conclude that $v$ is constant,
  whenceupon the third and fourth equations in \eqref{prob:ss} direct imply that also $w$ and $z$ are constant.
\end{proof}

\section{Numerical method}
\label{Numerics}
\subsection{Galerkin approximation}
\label{Galerkin}
In this subsection, we will give the details on the implicit finite element discretization for solving the problem (\ref{prob}) numerically. A finite element discretization of the system (\ref{prob}) is based on its weak formulation, which reads: Find $u, v, w, z \in L^\infty(0, T; L^2(\Omega))\cap L^2(0, T; H^1(\Omega))$ with $u_t, v_t, w_t, z_t \in L^2(0, T; (H^1(\Omega))^\star)$ for given $u^0, v^0, w^0, z^0 \in L^2(\Omega)$ such that a.e.\ in $(0, \infty)$ we have
\begin{equation}
\begin{aligned} \label{weak}
   & \langle u_t,\psi \rangle = -D_u \int_{\Omega} \nabla u \cdot \nabla \psi dx - \chi_u \int_{\Omega} u\nabla w \cdot \nabla \psi dx, \\
    &  \langle v_t,\psi \rangle  = -D_v \int_{\Omega} \nabla v \cdot \nabla \psi dx - \chi_v \int_{\Omega} v\nabla z \cdot \nabla \psi dx, \\
    &  \langle w_t,\psi \rangle  = -\int_{\Omega} w\, \psi\, dx + \int_{\Omega} f(v)\, \psi \, dx , \\
    &  \langle z_t,\psi \rangle  = -\int_{\Omega} z\, \psi\, dx + \int_{\Omega} g(u)\, \psi\, dx ,
\end{aligned}
\end{equation}
for all $\psi \in \con\infty$. Here, $ \langle \cdot , \cdot \rangle $ represents the duality pairing between $(H^1(\Omega))^\star$ and $H^{1}(\Omega)$.

To define a finite element discretization of problem (\ref{prob}), we first consider ${\mathcal T}_h$, a uniformly regular triangulation of $\Omega$ with a mesh size $h$. Then, we construct the finite element space $X_h$ consisting of continuous piecewise multi-linear functions
 \[ X_h = \{\phi \in H^1 (\Omega) ; \phi |_K \in Q_1(K) , \forall K \in {\mathcal T}_h\},\, \quad Z_h = X_h \cap H_0^1(\Omega),\, \]
with basis functions $\psi_{j}$, $j = 1,\dots,M$, such that $X_h = \operatorname{span} \{\psi_{j}\}$, where $M$ is the number of degrees of freedom and $Q_1$ is a space consisting of piecewise multi-linear functions . Any function $u_h \in X_h$ can be written in a unique way with respect to these basis functions as
\begin{equation*}
    u_h = \sum_{j=1}^M\,u_j\,\psi_j\,,
\end{equation*}
and hence it can be identified with the coefficient vector $\mathbf{u} = (u_1, \dots, u_M)$; $v_h, w_h,$ and $z_h$ can be defined similarly. Next, let $0=t_0<t_1<\dots<t_N=T$ be an equidistant decomposition of the time interval $[0, T]$ with
$\Delta t = t^{n+1} - t^{n}$, $n=0,\dots,N-1$. We use $u_h^{n}$, $v_h^{n}$, $w_h^{n}$, $z_h^{n} \in X_h$ to denote the approximation of the solutions at each time level $t^{n}$. Furthermore, an important feature of the considered system consists in the nonlinear terms and coupling between the equations, so that a fully implicit discretization leads to a coupled nonlinear algebraic system. We compute the solution of this nonlinear problem using fixed-point iterations. As a result, after applying the usual approach for deriving a Galerkin finite element scheme for space discretization, considering the $\theta$-method for time discretization, and using a fixed-point iteration to treat the nonlinear terms in the system the linearized  algebraic form corresponding to the system (\ref{weak}) reads as:
\allowdisplaybreaks
\begin{align}
   &({\mathbb M}+\theta\,\Delta t\,{\mathbb A}_{k-1}^{n+1,u})\,\mathbf{u}_{k}^{n+1}=
   ({\mathbb M}-(1-\theta)\,\Delta t\,{\mathbb A}^{n,u})\,\mathbf{u}^n\,, \label{eq1u} \\
   &({\mathbb M}+\theta\,\Delta t\,{\mathbb A}_{k-1}^{n+1,v})\,\mathbf{v}_{k}^{n+1}=
   ({\mathbb M}-(1-\theta)\,\Delta t\,{\mathbb A}^{n,v})\,\mathbf{v}^n\,, \label{eq1v} \\
   & (1+\theta\, \Delta t)\,\mathbb M\,\mathbf{w}_{k}^{n+1} =
      (1 - (1-\theta)\, \Delta t)\, \mathbb M\,\mathbf{w}^{n}
      + \Delta t\,(\theta \, G_{k}^{n+1,v} + (1-\theta)\, G^{n,v})\,, \label{eq1w} \\
   & (1+\theta\, \Delta t)\,\mathbb M\,\mathbf{z}_{k}^{n+1} =
      (1- (1-\theta)\, \Delta t)\,\mathbb M\,\mathbf{z}^{n}
      + \Delta t\,(\theta \, G_{k}^{n+1,u} + (1-\theta)\, G^{n,u})\,, \label{eq1z}
\end{align}
for $\theta \in [0, 1]$, where
$\mathbf{u}_{k}^{n+1} = (u_{j,k}^{n+1})_{j=1}^{M}$,
$\mathbf{v}_{k}^{n+1} = (v_{j,k}^{n+1})_{j=1}^{M}$,
$\mathbf{w}_{k}^{n+1} = (w_{j,k}^{n+1})_{j=1}^{M}$,
and
$\mathbf{z}_{k}^{n+1} = (z_{j,k}^{n+1})_{j=1}^{M}$ denote the vectors of unknowns at time level $t^{n+1}$ and iteration step $k \,, k= 1,\ldots,$ and 
$\mathbf{u}^{n} = (u_{j}^{n})_{j=1}^{M}$,
$\mathbf{v}^{n} = (v_{j}^{n})_{j=1}^{M}$,
$\mathbf{w}^{n} = (w_{j}^{n})_{j=1}^{M}$,
and 
$\mathbf{z}^{n} = (z_{j}^{n})_{j=1}^{M}$ are the known solutions from the previous time level $t^{n}$. Setting $\mathbf{u}_{0}^{n+1} = \mathbf{u}^{n}$, $\mathbf{v}_{0}^{n+1} = \mathbf{v}^{n}$, $\mathbf{w}_{0}^{n+1} = \mathbf{w}^{n}$, and $\mathbf{z}_{0}^{n+1} = \mathbf{z}^{n}$ with $\mathbf{u}^{0} = \mathbf{u}(x, 0)$, $\mathbf{v}^{0} = \mathbf{v}(x, 0)$, $\mathbf{w}^{0} = \mathbf{w}(x, 0)$, and $\mathbf{z}^{0} = \mathbf{z}(x, 0)$, the entries of the mass matrix, stiffness matrices and vectors above are given by
\begin{align*}
&\mathbb{M}_{ij} = \int_{\Omega} \psi_j\,\psi_i\, \,,\\
& \mathbb{A}_{ij,k-1}^{n+1,u} = D_u \int_{\Omega} \nabla \psi_j \cdot \nabla \psi_i 
                              + \chi_u \int_{\Omega} \psi_j \nabla w_{h,k-1}^{n+1} \cdot         \nabla \psi_i \,, \\
& \mathbb{A}_{ij}^{n,u} = D_u \int_{\Omega} \nabla \psi_j \cdot \nabla \psi_i
                              + \chi_u \int_{\Omega} \psi_j \nabla w_{h}^{n} \cdot         \nabla \psi_i \,, \\
& \mathbb{A}_{ij,k-1}^{n+1,v} = D_v \int_{\Omega} \nabla \psi_j \cdot \nabla \psi_i
                              + \chi_v \int_{\Omega} \psi_j \nabla z_{h,k-1}^{n+1} \cdot         \nabla \psi_i, \\
& \mathbb{A}_{ij}^{n,v} = D_v \int_{\Omega} \nabla \psi_j \cdot \nabla \psi_i
                              + \chi_v \int_{\Omega} \psi_j \nabla z_{h}^{n} \cdot         \nabla \psi_i\,, \\
& G_{i,k}^{n+1,v} = \int_{\Omega} f(v_{h,k}^{n+1})\, \psi_i, \\
& G_{i}^{n,v} = \int_{\Omega} f(v_{h}^{n})\, \psi_i, \\
& G_{i,k}^{n+1,u} = \int_{\Omega} g(u_{h,k}^{n+1})\, \psi_i,\\
& G_{i}^{n,u} = \int_{\Omega} g(u_{h}^{n})\, \psi_i
\end{align*}
for $i,j = 1, \dots, M$ where $\psi_i \in Z_h$ and $u_{h,k}^{n+1}$, $v_{h,k}^{n+1}$, $w_{h,k}^{n+1}$, $z_{h,k}^{n+1}$ denote the fully discrete solution functions at fixed point step $k=1,2,\ldots .$

\subsection{Positivity-preserving flux-corrected transport scheme}
\label{FCT}
Since all components of the system (\ref{prob}) represent densities of populations or graffiti, they should be nonnegative, and Lemma~\ref{lm:local_ex} asserts that the analytical solutions considered in the first part of the present paper are indeed nonnegative. Therefore, the numerical solutions of the model problem must also be nonnegative in order to satisfy the physics behind the system. Thus, the numerical methods should be constructed in such a way that the qualitative properties of the exact solutions are preserved. The standard Galerkin discretization described in the previous subsection usually produces oscillatory nonpositive solutions, which leads to numerical instabilities, especially when the convective part of the system is dominant. Therefore, a stabilization has to be applied. One appropriate possibility is to modify the algebraic system resulting from the Galerkin discretization, which will be discussed in the following.

The numerical behavior of the gang concentrations $u$ and $v$ heavily depends on the properties of the matrices on the left- and right-hand sides of (\ref{eq1u}) and (\ref{eq1v}). In the following, we will introduce a positivity-preserving flux-corrected transport (FCT) scheme following the work of Kuzmin \cite{Kuz09, Kuz12a, KT02} which can preserve the positivity of the concentrations $u$ at all times, the positivity of $v$ can be obtained similarly by repeating the same process.
\begin{definition}
A matrix $\mathbb{A}$ is called a Z-matrix if it has only nonpositive off-diagonal entries,
monotone if $\mathbb{A}^{-1} \geq 0$
and an M-matrix if it is a monotone Z-matrix.
\end{definition}
\begin{lemma}\label{lem:lin_syst}
Consider a fully discrete system of the form 
\begin{equation}\label{eq:general_system}
   {\mathbb B}\,\mathbf{u}^{n+1}
   ={\mathbb K}\,\mathbf{u}^n
\end{equation}
and suppose that the coefficients of  ${\mathbb B}=(b_{ij})_{i,j=1}^M$ and 
${\mathbb K}=(k_{ij})_{i,j=1}^M$ satisfy
\begin{equation*}
  b_{ii}\ge0\,, \quad k_{ii}\ge0\,, \quad 
  b_{ij}\leqslant 0\,, \quad k_{ij} \geqslant 0,
  \qquad\forall\,\,i,j=1,\dots,M\,,\,\,i\neq j\,.
\end{equation*}
If $\mathbb{B}$ is strictly or irreducibly diagonally dominant, then it is an M-matrix and
\begin{enumerate}
\item[1.] the scheme (\ref{eq:general_system}) is globally positivity-preserving, i.e., $\mathbf{u}^{n+1}\ge0$ if $\mathbf{u}^n\ge0$, 
\item[2.] the global discrete maximum principle (DMP) holds if $\sum_{j} b_{ij} = \sum_{j} k_{ij}\; \forall i=1,\ldots,M$, i.e.,
 \begin{align*}
(\min \mathbf{u}^{n})^{-} \leq \mathbf{u}^{n+1} \leq (\max \mathbf{u}^{n})^{+},
\end{align*}
where $(\max \mathbf{u}^n)^{+} = \max \{0, \mathbf{u}^n\}$ and $(\min \mathbf{u}^{n})^{-} = \min \{0, \mathbf{u}^n\}$, $n=0,\ldots, N-1.$
\end{enumerate}
\end{lemma}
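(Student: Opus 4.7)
The plan is to work through the three claims in sequence, since each builds naturally on the previous. First I would argue that $\mathbb{B}$ is an M-matrix: by hypothesis $\mathbb{B}$ has nonpositive off-diagonal entries and nonnegative diagonal entries, so it is a Z-matrix. Strict or irreducible diagonal dominance, together with $b_{ii}\ge 0$, in fact forces $b_{ii}>0$ (otherwise the $i$-th row would have to vanish entirely, contradicting dominance). I would then invoke the classical characterization of M-matrices (e.g.\ via the theorems of Varga/Berman--Plemmons): a Z-matrix with positive diagonal that is strictly or irreducibly diagonally dominant is nonsingular and its inverse is entrywise nonnegative. This gives the M-matrix property and $\mathbb{B}^{-1}\ge 0$ simultaneously.

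For the positivity preservation in part~1, the argument is then immediate: if $\mathbf{u}^n\ge 0$ componentwise, then the nonnegativity of all entries of $\mathbb{K}$ yields $\mathbb{K}\mathbf{u}^n\ge 0$, and multiplying by $\mathbb{B}^{-1}\ge 0$ preserves this. Hence $\mathbf{u}^{n+1}=\mathbb{B}^{-1}\mathbb{K}\mathbf{u}^n\ge 0$.

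For the discrete maximum principle in part~2, I would use the row-sum condition $\sum_j b_{ij}=\sum_j k_{ij}$. Writing $\mathbf{1}=(1,\dots,1)^\top$, this condition reads $\mathbb{B}\mathbf{1}=\mathbb{K}\mathbf{1}$, so for any scalar $c\in\R$ the constant vector $c\mathbf{1}$ is a fixed point of the scheme. Setting $U\defs(\max\mathbf{u}^n)^+$ and applying $\mathbb{B}^{-1}\mathbb{K}$ to the nonnegative vector $U\mathbf{1}-\mathbf{u}^n$ (using again $\mathbb{K}\ge 0$ entrywise off-diagonal, $\mathbb{B}^{-1}\ge 0$, and the identity $\mathbb{B}^{-1}\mathbb{K}(U\mathbf{1})=U\mathbf{1}$) yields $\mathbf{u}^{n+1}\le U\mathbf{1}$. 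The symmetric argument with $L\defs(\min\mathbf{u}^n)^-$ applied to $\mathbf{u}^n-L\mathbf{1}\ge 0$ gives $\mathbf{u}^{n+1}\ge L\mathbf{1}$, which is the claimed two-sided bound.

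The only real obstacle is the first step, namely justifying $\mathbb{B}^{-1}\ge 0$ from diagonal dominance. The strict dominance case is straightforward from Gershgorin-type estimates and a standard splitting $\mathbb{B}=D-N$ with $N\ge 0$, $D$ a positive diagonal, and $\rho(D^{-1}N)<1$, giving the Neumann-series representation $\mathbb{B}^{-1}=\sum_{k\ge 0}(D^{-1}N)^k D^{-1}\ge 0$. The irreducibly diagonally dominant case requires the extra ingredient that irreducibility plus at least one row of strict inequality implies $\rho(D^{-1}N)<1$ as well, which is a standard result I would simply cite. Once this is in place, parts~1 and~2 follow essentially by bookkeeping, as outlined above.
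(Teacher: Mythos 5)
Your proof is correct, but it is worth knowing that the paper does not actually prove this lemma at all: its ``proof'' is a one-line citation to Kuzmin et al.\ (Algebraic flux correction I, Theorem~4), so your argument is a genuinely different, self-contained route. What you do --- establish that $\mathbb{B}$ is a Z-matrix whose diagonal is forced to be positive, invoke (or rederive via the splitting $\mathbb{B}=D-N$ and the Neumann series) the classical fact that a strictly or irreducibly diagonally dominant Z-matrix with positive diagonal is a nonsingular M-matrix with $\mathbb{B}^{-1}\ge 0$, and then get positivity from $\mathbb{B}^{-1}\ge 0$, $\mathbb{K}\ge 0$ --- is exactly the standard M-matrix machinery the cited reference packages, so your version buys self-containedness at the cost of carrying the Varga/Berman--Plemmons input explicitly. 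Your DMP step is also sound and in fact mirrors the computation the paper carries out separately in the remark following its Lemma on the time-step restriction: there the authors use the one-sided condition $\mathbb{B}\,\mathbb{I}_M\ge \mathbb{K}\,\mathbb{I}_M$ together with the signs of $(\max\mathbf{u}^n)^+$ and $(\min\mathbf{u}^n)^-$, whereas you exploit the equality of row sums to make constant vectors exact fixed points of $\mathbb{B}^{-1}\mathbb{K}$, which is cleaner and does not need the sign of the constant. One tiny imprecision: in the irreducibly diagonally dominant case, $b_{ii}=0$ is ruled out because a vanishing row would contradict \emph{irreducibility} (for $M\ge 2$), not the dominance inequality itself, which would only read $0\ge 0$; the fix is immediate and does not affect the argument.
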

\begin{proof}
See \cite[Algebraic flux correction I, Theorem~4]{Kuz2012}.
\end{proof}
For the system (\ref{eq1u}), which can be written as (\ref{eq:general_system}), the above properties do not hold since the mass matrix $\mathbb{M}$ may contain some nonnegative off-diagonal entries and also some positive off-diagonal entries might appear in the stiffness matrices. Therefore, as a remedy, following the work of Kuzmin \cite{Kuz09, Kuz12a, KT02}, we not only replace the mass matrix $\mathbb{M}$ by its diagonal counterpart, the lump matrix $\mathbb{M}_L$,
\[\mathbb{M}_L = \operatorname{diag}(m_1,\cdots,m_M),\, \quad m_i=\sum_{j=1}^M m_{ij},\, \quad i=1,\cdots,M,\] but also add symmetric artificial diffusion matrices $\mathbb{D}_{k-1}^{n+1,u} = (d_{ij,k-1}^{n+1,u})_{i,j=1}^{M}$ and $\mathbb{D}^{n,u} = (d_{ij}^{n,u})_{i,j=1}^{M}$ to the stiffness matrices $\mathbb{A}_{k-1}^{n+1,u}$ and $\mathbb{A}^{n,u}$ to eliminate their nonnegative off-diagonal entries as:
\begin{equation*}
  d^{n+1,u}_{ij,k-1} = -\max\{a_{ij,k-1}^{n+1,u},0,a_{ji,k-1}^{n+1,u}\}\quad
  \mbox{for} \ i\neq j\,,\quad
  d^{n+1,u}_{ii,k-1}=-\sum_{j=1,j \neq i}^M d^{n+1,u}_{ij,k-1}\,,\quad n=0,\cdots,N-1 \,, \quad k=1,2,\ldots 
\end{equation*}
and 
\begin{equation*}
  d^{n,u}_{ij}=-\max\{a^{n,u}_{ij},0,a^{n,u}_{ji}\}\quad
  \mbox{for} \ i\neq j\,,\qquad\quad
  d^{n,u}_{ii}=-\sum_{j=1,j \neq i}^M d^{n,u}_{ij}\,, \quad n=0,\cdots,N-1. 
\end{equation*}

Denoting $\mathbb{\tilde{A}}_{k-1}^{n+1,u} = \mathbb{A}_{k-1}^{n+1,u} + \mathbb{D}_{k-1}^{n+1,u}$ and $\mathbb{\tilde{A}}^{n,u} = \mathbb{A}^{n,u} + \mathbb{D}^{n,u}$, the low-order form of the (\ref{eq1u}) can be written as:
\begin{equation}\label{eq1ulow}
   (\mathbb{M}_{L}+\theta\,\Delta t\,\mathbb{\tilde{A}}_{k-1}^{n+1,u})\,\mathbf{u}_{k}^{n+1}=
   (\mathbb{M}_{L}-(1-\theta)\,\Delta t\,{\mathbb{\tilde{A}}}^{n,u})\,\mathbf{u}^n\,,\qquad
   n=0,\dots,N-1\,.
\end{equation}

\begin{lemma}\label{lem:time_step}
Let the time step $\Delta t$ satisfy
\begin{equation}\label{eq:time_step}
    m_i - (1 - \theta)\, \Delta t\, \tilde{a}_{ii}^{n,u} \geq 0\,,\qquad
    m_i + \theta\, \Delta t\, \sum_{j=1}^{M} a_{ij,k-1}^{n+1,u} > 0, \,\, \quad n=0,\cdots,N-1 \,, \quad k=1,2,\ldots ,
\end{equation}
then the low-order scheme (\ref{eq1ulow}) is positivity-preserving.
\end{lemma}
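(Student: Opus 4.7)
The plan is to reduce the claim to a direct verification of the hypotheses of Lemma~\ref{lem:lin_syst} applied to the low-order system (\ref{eq1ulow}). With this in mind, I would set $\mathbb{B} \defs \mathbb{M}_L + \theta\,\Delta t\,\tilde{\mathbb{A}}_{k-1}^{n+1,u}$ and $\mathbb{K} \defs \mathbb{M}_L - (1-\theta)\,\Delta t\,\tilde{\mathbb{A}}^{n,u}$ and check the four sign/dominance conditions one after the other.

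First, I would record the structural consequences of adding artificial diffusion. By the very definition of $d^{n+1,u}_{ij,k-1}$ and $d^{n,u}_{ij}$ for $i\neq j$, one obtains
\begin{equation*}
  \tilde{a}^{n+1,u}_{ij,k-1}
  = a^{n+1,u}_{ij,k-1} - \max\{a^{n+1,u}_{ij,k-1},\,0,\,a^{n+1,u}_{ji,k-1}\} \le 0
\end{equation*}
and analogously $\tilde{a}^{n,u}_{ij} \le 0$ for $i\neq j$. Since $\mathbb{M}_L$ is diagonal, this immediately yields $b_{ij}\le 0$ and $k_{ij}\ge 0$ for $i\neq j$, i.e., the required off-diagonal sign conditions of Lemma~\ref{lem:lin_syst}. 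Second, the choice $d^{n,u}_{ii} = -\sum_{j\neq i} d^{n,u}_{ij}$ means each row of $\mathbb{D}^{n,u}$ sums to zero, so $\sum_{j} \tilde{a}^{n,u}_{ij} = \sum_{j} a^{n,u}_{ij}$, and similarly $\sum_{j} \tilde{a}^{n+1,u}_{ij,k-1} = \sum_{j} a^{n+1,u}_{ij,k-1}$. This identity is the bridge that reconciles the time-step condition (\ref{eq:time_step}), which is stated in terms of the original matrix $\mathbb{A}$, with the modified matrix $\tilde{\mathbb{A}}$ appearing in the scheme.

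Third, I would turn to the diagonal entries. The first inequality in (\ref{eq:time_step}) gives $k_{ii} = m_i - (1-\theta)\Delta t\,\tilde{a}^{n,u}_{ii} \ge 0$ directly. For $b_{ii}$, using the zero-row-sum identity just observed,
\begin{equation*}
  b_{ii} - \sum_{j\neq i} |b_{ij}|
  = m_i + \theta\,\Delta t\,\tilde{a}^{n+1,u}_{ii,k-1} + \theta\,\Delta t\!\sum_{j\neq i}\tilde{a}^{n+1,u}_{ij,k-1}
  = m_i + \theta\,\Delta t\!\sum_{j=1}^{M} a^{n+1,u}_{ij,k-1} > 0
\end{equation*}
by the second inequality in (\ref{eq:time_step}). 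Hence $\mathbb{B}$ is strictly diagonally dominant, and in particular $b_{ii}>0$.

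Finally, I would invoke Lemma~\ref{lem:lin_syst}: all four sign conditions and strict diagonal dominance of $\mathbb{B}$ are met, so $\mathbb{B}$ is an M-matrix, $\mathbb{B}^{-1}\ge 0$, and $\mathbf{u}_k^{n+1} = \mathbb{B}^{-1}\mathbb{K}\mathbf{u}^n \ge 0$ whenever $\mathbf{u}^n\ge 0$. The only subtle point of the argument is the bookkeeping between $a$-entries and $\tilde{a}$-entries; once one notes that the artificial-diffusion matrices have vanishing row sums, everything reduces to elementary sign-chasing and Lemma~\ref{lem:lin_syst} does the rest.
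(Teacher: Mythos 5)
Your proposal is correct and follows essentially the same route as the paper: identify $\mathbb{B}$ and $\mathbb{K}$, use the Z-matrix structure of $\tilde{\mathbb{A}}$ and the zero row sums of the artificial diffusion matrices together with the two time-step conditions to get $\mathbb{K}\ge 0$ and strict diagonal dominance of $\mathbb{B}$, and conclude via Lemma~\ref{lem:lin_syst}. Your version merely spells out the row-sum bookkeeping more explicitly than the paper does.
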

\begin{proof}
Denoting ${\mathbb K} = (\mathbb{M}_{L}-(1-\theta)\,\Delta t\,{\mathbb{\tilde{A}}}^{n,u})$, since $\mathbb{M}_{L}$ is diagonal and ${\mathbb{\tilde{A}}}^{n,u}$ is a Z-matrix the off-diagonal entries of ${\mathbb K}$ are non-negative and it also has non-negative diagonal entries if the first condition in (\ref{eq:time_step}) is satisfied, thus ${\mathbb K} \geq 0$. Now, set ${\mathbb B} = (\mathbb{M}_{L}+\theta\,\Delta t\,\mathbb{\tilde{A}}_{k-1}^{n+1,u})$, since $ \mathbb{D}_{k-1}^{n+1,u}$ has zero row sum, if the second condition in (\ref{eq:time_step}) holds we can conclude that ${\mathbb B}$ is strictly diagonally dominant and hence nonsingular. Furthermore ${\mathbb B} $ is a matrix of non-negative type hence it is an M-matrix, therefore according to Lemma~\ref{lem:lin_syst}, (\ref{eq1ulow}) is positivity-preserving.
\end{proof}

\begin{remark}
The scheme (\ref{eq1ulow}) can be simplified as
\begin{equation*}
   {\mathbb B}\,\mathbf{u}^{n+1}
   ={\mathbb K}\,\mathbf{u}^n,
\end{equation*}
where $\mathbb{B}, \mathbb{K} \in \mathbb{R}^{M\times M}$ and $\mathbf{u}^{n+1}, \mathbf{u}^{n} \in \mathbb{R}^{M} $. Assuming that 
$$\mathbb{B}^{-1}\geq 0, \quad \mathbb{K} \geq 0, \quad \mathbb{B}\, \mathbb{I}_M\, \geq \mathbb{K}\, \mathbb{I}_M,$$
where $\mathbb{I}_M$ denotes a vector with all entries equal to $1$, one obtains with $M= (\max \mathbf{u}^n)^{+} $ and $m = (\min \mathbf{u}^{n})^{-} $, $n=0,\ldots, N-1$ that
\begin{align*}
\mathbf{u}^{n+1} = \mathbb{B}^{-1}\, \mathbb{K}\, \mathbf{u}^{n}\, \leq M\, \mathbb{B}^{-1}\, \mathbb{K}\, \mathbb{I}_M\, \leq M\, \mathbb{B}^{-1}\, \mathbb{B}\, \mathbb{I}_M\, = M\, \mathbb{I}_M,\\
\mathbf{u}^{n+1} = \mathbb{B}^{-1}\, \mathbb{K}\, \mathbf{u}^{n}\, \geq m\, \mathbb{B}^{-1}\, \mathbb{K}\, \mathbb{I}_M\, \geq m\, \mathbb{B}^{-1}\, \mathbb{B}\, \mathbb{I}_M\, = m\, \mathbb{I}_M\,,
\end{align*}
thus that
\begin{align*}
(\min \mathbf{u}^{n})^{-} \leq \mathbf{u}^{n+1} \leq (\max \mathbf{u}^{n})^{+}, \quad i=1, ..., M,
\end{align*}
i.e., that the DMP is satisfied. We would like to notice that due to the construction and following the Lemma~\ref{lem:time_step} the conditions $\mathbb{B}^{-1}\geq 0$ ($\mathbb{B}$ is an M-matrix and thus invertible) and $ \mathbb{K} \geq 0$ already hold. However, without further assumptions on the matrices $\mathbb{\tilde{A}}_{k-1}^{n+1,u}$ and $\mathbb{\tilde{A}}^{n,u}$, proving the last assumption above (which is a replacement of the row-sum property in Lemma~\ref{lem:lin_syst}) is very challenging, see \cite{BJK23} for more information. 
\end{remark}

Although, the solution of (\ref{eq1ulow}) suppresses the spurious oscillations and does not produce negative solutions, it often becomes inaccurate since the amount of the added artificial diffusion is usually too large. Therefore, the idea of FEM-FCT is to modify the right-hand side of (\ref{eq1ulow}) in such a way that the solutions become less diffusive in the smooth regions while their positivity is still conserved. By construction, the difference between (\ref{eq1u}) and (\ref{eq1ulow}) reads
\begin{equation}\label{eq:recover1}
   (\mathbb{M}_L-{\mathbb M})(\mathbf{u}_{k}^{n+1}-\mathbf{u}^n)
   +\theta\,\Delta t\,{\mathbb D}_{k-1}^{n+1,u}\,\mathbf{u}_{k}^{n+1}
   +(1-\theta)\,\Delta t\,{\mathbb D}^{n,u}\,\mathbf{u}^n\,, \quad n=0,\cdots,N-1 \,, \quad k=1,2,\ldots,
\end{equation}
which is nonlinear since it depends on the approximate solution $\mathbf{u}_{k}^{n+1}$.
To be aligned with treating nonlinearities in the equation \eqref{eq1u} using fixed point iteration, we replace $u_{k}^{n+1}$ by $u_{k-1}^{n+1}$, which leads to
\begin{equation*}
   \overline{\mathbf{f}}_{k-1}^{n+1,u} \defs
   (\mathbb{M}_L-{\mathbb M})(\mathbf{u}_{k-1}^{n+1}-\mathbf{u}^n)
   +\theta\,\Delta t\,{\mathbb D}_{k-1}^{n+1,u}\,\mathbf{u}_{k-1}^{n+1}
   +(1-\theta)\,\Delta t\,{\mathbb D}^{n,u}\,\mathbf{u}^n\,, \quad n=0,\cdots,N-1 \,, 
\end{equation*}
which admits a decomposition into a sum of discrete internodal fluxes 
\begin{equation}\label{eq:f-decompose}
   \overline{\mathbf{f}}_{k-1}^{n+1,u}= \sum_{j=1}^{M} f_{ij,k-1}^{n+1,u},\, \quad f_{ij,k-1}^{n+1,u} = -f_{ji,k-1}^{n+1,u},\, \quad i,j=1,\ldots,M,
\end{equation}
since the matrices ${\mathbb D}_{k-1}^{n+1,u}, {\mathbb D}^{n,u}$ and $(\mathbb{M}_L-{\mathbb M})$ are symmetric and have zero row sums. Moreover, the so-called algebraic fluxes $f_{ij,k-1}^{n+1,u}$ are given by
\begin{equation*}
   f^{n+1,u}_{ij,k-1}=\big(-m_{ij}+\theta\,\Delta t\,d^{n+1,u}_{ij,k-1}\big)
   (u_{j,k-1}^{n+1}-u_{i,k-1}^{n+1})
   +\big(m_{ij}+(1-\theta)\,\Delta t\,d^{n,u}_{ij}\big)(u^{n}_j-u^{n}_i).
\end{equation*}
Now, the amount of algebraic fluxes inserted into each node must be limited by a limiter $\alpha_{ij,k-1}^{n+1,u} \in [0, 1]$ in such a way that in addition to keeping the numerical solutions positive, it also controls the amount of added artificial diffusion especially in the regions where the solutions are smooth and well-resolved where $\alpha_{ij,k-1}^{n+1,u} = 1$ is appropriate, therefore one replaces (\ref{eq:f-decompose}) by
\begin{equation*}
   {\mathbf{f}}_{k-1}^{n+1,u}= \sum_{j=1}^{M} \alpha_{ij,k-1}^{n+1,u} f_{ij,k-1}^{n+1,u},\, \quad \alpha_{ij,k-1}^{n+1,u} = \alpha_{ji,k-1}^{n+1,u},\, \quad i,j=1,\ldots,M,
\end{equation*}
this leads to 
\begin{equation}\label{eq1uhigh}
   (\mathbb{M}_{L}+\theta\,\Delta t\,\mathbb{\tilde{A}}_{k-1}^{n+1,u})\,\mathbf{u}_{k}^{n+1}=
   (\mathbb{M}_{L}-(1-\theta)\,\Delta t\,{\mathbb{\tilde{A}}}^{n,u})\,\mathbf{u}^n + {\mathbf{f}}_{k-1}^{n+1,u} \,,\qquad
   n=0,\dots,N-1\,, \quad k=1,2,\ldots 
\end{equation}
and it can be rewritten in the form
\begin{eqnarray*}
   \mathbb{M}_L\,\overline{\mathbf{u}}
   &=&(\mathbb{M}_L-(1-\theta)\,\Delta t\,{\mathbb{\tilde{A}}^{n,u}})\,\mathbf{u}^n\,,
   \\
   \mathbb{M}_L\,\tilde{\mathbf{u}}&=&\mathbb{M}_L\,\overline{\mathbf{u}} + \mathbf{f}_{k-1}^{n+1,u}\,,
   \\
   (\mathbb{M}_L+\theta\,\Delta t\,{\mathbb{\tilde{A}}_{k-1}^{n+1,u}})\,\mathbf{u}_{k}^{n+1}
   &=&\mathbb{M}_L\,\tilde{\mathbf{u}}\,.
\end{eqnarray*}
which is a high-resolution finite-element scheme. Moreover, it is positivity-preserving under the conditions (\ref{eq:time_step}) and for appropriate choice of flux limiters, see \cite{Hey2023}.

We use the limiting strategy based on Zalesak's algorithm \cite{Zal79} to determine an appropriate value of $\alpha_{ij,k-1}^{n+1,u}$. The limiting process begins with canceling all fluxes that are diffusive in nature and tend to flatten the solutions profile \cite{Kuz12a}. The required modification is:
\begin{equation*}
   f^{n+1,u}_{ij,k-1}\defs0\qquad\mbox{if}\,\,\,\,
   f^{n+1,u}_{ij,k-1}\,(\overline{u}_j-\overline{u}_i) > 0\,.
\end{equation*}
To define the limiter, we perform the following steps:
\begin{itemize}
\item[1.]
Compute the sum of positive/negative antidiffusive fluxes into node $i$,
\begin{equation}\label{eq:zalesak1}
   P_{i}^{+} = \sum_{j\in{\mathcal N}_i}\,\max \{0, {f}_{ij,k-1}^{n+1,u}\}\,,\qquad 
   P_{i}^{-} = \sum_{j\in{\mathcal N}_i}\,\min \{0, {f}_{ij,k-1}^{n+1,u}\}\,,
\end{equation}
where $\mathcal N_i$ is the set of the nearest neighbors of the node $i$. 
\item[2.]
Compute the distance to a local extremum of the auxiliary solution 
$\overline{\mathbf{u}}$ at the neighboring nodes that share an edge with the node $i$,
\begin{equation}\label{eq:zalesak2}
   Q_i^+=m_i\,(\overline u_i^{\mathrm{max}}-\overline u_i) \,,\qquad
   Q_i^-=m_i\,(\overline u_i^{\mathrm{min}}-\overline u_i) \,,
\end{equation}
with 
$   \overline u_i^{\mathrm{min}} = \min_{j\in{\mathcal N}_i\cup\{i\}} \overline u_j\,\,,
   \overline u_i^{\mathrm{max}} = \max_{j\in{\mathcal N}_i\cup\{i\}} \overline u_j\,\,, i=1,\dots,M\,.$
\item[3.] 
Compute the nodal correction factors for the net increment at the node $i$,
\begin{equation}\label{eq:zalesak3}
   R_i^+=\min\left\lbrace 1,\dfrac{Q_i^+}{ P_i^+}\right\rbrace,\qquad
   R_i^-=\min\left\lbrace 1,\dfrac{Q_i^-}{ P_i^-}\right\rbrace,
\end{equation}
if $P_i^+$ or $P_i^-$ vanishes then set $R_i^+ =1$ or $R_i^- =1$, respectively.
\item[4.]
Check the sign of the antidiffusive flux and apply the correction factor by
\begin{equation}\label{eq:zalesak4}
   \alpha_{ij} = \left\{
   \begin{array}{ll}
      \min\{R_{i} ^{+}, R_{j} ^{-} \}\quad&\mbox{if}\quad\,f^{n+1,u}_{ij,k-1}>0\,,\\[1mm]
      1&\mbox{if}\quad\,f^{n+1,u}_{ij,k-1} = 0\,, \\[1mm]
      \min\{R_{i} ^{-}, R_{j} ^{+} \}\quad&\mbox{if}\quad\,f^{n+1,u}_{ij,k-1}<0\,.
   \end{array}\right.
\end{equation}
\end{itemize}
Another way how to derive (\ref{eq1uhigh}) is to apply the FCT stabilization to the nonlinear problem obtained by applying the Galerkin discretization and the $\theta$-method before introducing the fixed-point iterations. The algebraic fluxes defined in this way depend on the unknown solution $u$ and hence the FCT stabilization introduces an additional nonlinearity. However, since the problem at hand is already nonlinear, both nonlinearities can be handled simultaneously using a fixed-point iteration. Moreover, the procedure explained above can also be used to obtain a high-resolution positivity-preserving approximation of $v$. 

We summarize the procedure above in the following algorithm. 
\begin{algorithm}[H]\caption{FEM-FCT}
\begin{algorithmic}[1]
   \FOR{time step $n \leftarrow 0, \cdots, N$}
   \FOR{iteration step $k=1,2,\ldots$}
   \STATE{\textbf{solve} for $\mathbf{u}_k^{n+1}$}
      \STATE{ \quad Solve $\mathbb{M}_L\,\overline{\mathbf{u}}
             =(\mathbb{M}_L-(1-\theta)\,\Delta t\,{\mathbb{\tilde{A}}^{n,u}})\,\mathbf{u}^n$ }
      \STATE{\quad Compute $\mathbf{f}_{k-1}^{n+1,u}$ using Zalesak's algorithm (\ref{eq:zalesak1})--(\ref{eq:zalesak4})}
      \STATE{\quad Solve $ \mathbb{M}_L\,\tilde{\mathbf{u}} = \mathbb{M}_L\,\overline{\mathbf{u}} +                      \mathbf{f}_{k-1}^{n+1,u}$}
      \STATE{\quad Solve $ (\mathbb{M}_L+\theta\,\Delta t\,{\mathbb{\tilde{A}}_{k-1}^{n+1,u}})\,   \mathbf{u}_{k}^{n+1}
   =\mathbb{M}_L\,\tilde{\mathbf{u}}$} 
      \STATE{\textbf{end solve}}
      
         \STATE{\textbf{solve} for $\mathbf{v}_k^{n+1}$}
      \STATE{\quad Solve  $\mathbb{M}_L\,\overline{\mathbf{v}}
             =(\mathbb{M}_L-(1-\theta)\,\Delta t\,{\mathbb{\tilde{A}}^{n,v}})\,\mathbf{v}^n$ }
      \STATE{\quad Compute $\mathbf{f}_{k-1}^{n+1,v}$ using Zalesak's algorithm (\ref{eq:zalesak1})--(\ref{eq:zalesak4})}
      \STATE{\quad Solve $ \mathbb{M}_L\,\tilde{\mathbf{v}}=\mathbb{M}_L\,\overline{\mathbf{v}} +                      \mathbf{f}_{k-1}^{n+1,v}$}
      \STATE{\quad Solve $ (\mathbb{M}_L+\theta\,\Delta t\,{\mathbb{\tilde{A}}_{k-1}^{n+1,v}})\,   \mathbf{v}_{k}^{n+1}
   =\mathbb{M}_L\,\tilde{\mathbf{v}}$} 
      \STATE{\textbf{end solve}}
      \STATE{\textbf{solve} for $\mathbf{w}_k^{n+1}$}
      \STATE{\quad Compute $\mathbf{w}_k^{n+1}$ from (\ref{eq1w})}
      \STATE{\textbf{end solve}}
      \STATE{\textbf{solve} for $\mathbf{z}_k^{n+1}$}
      \STATE{\quad Compute $\mathbf{z}_k^{n+1}$ from (\ref{eq1z})}
      \STATE{\textbf{end solve}}
      \ENDFOR
        \ENDFOR
\end{algorithmic}
\end{algorithm}

We iterate until the residual/difference between two successive solutions is less than a prescribed tolerance or until the maximum number of the iterations is reached, then set $\mathbf{u}^{n+1} = \mathbf{u}_{k}^{n+1}$, $\mathbf{v}^{n+1} = \mathbf{v}_{k}^{n+1}$, $\mathbf{w}^{n+1} = \mathbf{w}_{k}^{n+1}$, and $\mathbf{z}^{n+1} = \mathbf{z}_{k}^{n+1}$ and advance to the next time level. The system of the algebraic equations was solved using the direct solver UMFPACK \cite{Dav04} and for the implementation of our newly designed algorithm we used the deal.II library.

\section{Numerical simulations}
\label{simulation}
In this section, we present numerical experiments for solving the model problem (\ref{prob}) with $f(v)=\frac{v}{1+v}$ and $g(u)=\frac{u}{1+u}$, and compare our results with results from the literature \cite{AlsenafiBarbaroConvectionDiffusionModel2018, Alsenafi2021, BarbaroEtAlAnalysisCrossdiffusionModel2021} pertaining to related systems. The numerical simulations are computed on a square domain $\Omega = [-6, 6]^2$ that is discretized uniformly using quadrilateral elements with 5 levels of refinements which creates 1089 degrees of freedom, i.e., 33 grid points in each direction. In addition, conforming bilinear finite elements are used for all unknown variables. Furthermore, we use the time interval $[0, T]$, $T = 1000$, with a time step $\Delta t = 1.0$ in the Crank--Nicolson discretization method ($\theta$-scheme for $\theta = 0.5$). The setting above is fixed through all computations, unless otherwise mentioned.

\subsection{Can the gangs separate?}
\label{sep-study}
In this subsection, we investigate the possibility of segregation for the model problem (\ref{prob}). Thus, we consider different values of $D_u$, $D_v$, $\chi_u$, and $\chi_v$ in our simulations. The initial conditions used in the following examples are given by
\begin{equation}\label{eq:init_data_num1}
u^0(x,y) = 0.1 + \ure^{-(x-2)^2 - (y-2)^2}\,, \quad v^0(x,y) = 0.1 + \ure^{-(x+2)^2 - (y+2)^2}\,, \quad w^0(x,y) = 0\,, \quad z^0(x,y) = 0\,,
\end{equation}
see for instance Figure~\ref{fig1ab}(a) for an illustration of $u^0, v^0$. In our figures, we use the color red when the gang density $u$ presented in the discrete domain is larger than the gang density $v$ by at least $10^{-6}$ and dark blue color when the opposite inequality holds true. Locations where the difference between the gang densities is less than $10^{-6}$ are displayed in dark purple. Similarly and again with a cutoff of $10^{-6}$, we use orange if the amount of graffiti density $z$ is larger in a region, light blue for the opposite situation and light purple if the graffiti densities $z$ and $w$ are roughly the same.

Unlike as in the analytical part, we do not fix $D_u$, $D_v$, $\chi_u$, $\chi_v$ but on the contrary are interested in their effect on the dynamics.
However, if $(u, v, w, z)$ is a solution of \eqref{prob} with $f = g = \id$ and initial data $(u^0, v^0, w^0, z^0)$,
then $(Au, Bv, Bw, Az)$ solves \eqref{prob} with initial data $(Au^0, Bu^0, Bw^0, Az^0)$ and $(\chi_u, \chi_v)$ replaced by $(\frac{\chi_u}{B}, \frac{\chi_v}{A})$ for any $A, B > 0$.
That is, instead of considering larger initial data one may likewise consider larger $\chi_u, \chi_v$,
and one expects that for functions $f$ and $g$ with linear growth near $0$ these modifications have similar effects as well --
at least as long as the solutions remain comparatively small; if they become large this scaling argument may no longer be applicable.
In particular, Theorem~\ref{th:conv} suggests convergence towards homogeneous equilibria for small $\chi_u, \chi_v$ (and is inconclusive for large $\chi_u, \chi_v$).

\subsubsection{Convergence towards constant steady state}\label{sec:numeric_conv_sss}
To begin with, we first consider a case where $D_u=D_v=\chi_u=\chi_v=0.25$. In the top row of Figure~\ref{fig1ab}, we present the plot of the gang population densities $u$ and $v$ and in the bottom row their corresponding graffiti densities $z$ and $w$ computed using the standard Galerkin method (\ref{eq1u})--(\ref{eq1z}) over time. Figure~\ref{fig1ab}(a) corresponds to the gangs' initial conditions at $t=0$; evolving the time, we observe that the gangs start to spread inside the domain and mark their territories by spraying graffiti. By the time $t=5$, the whole upper triangle part of the domain is completely dominated by the gang $u$ and the lower triangle part is dominated by the gang $v$, this situation seems to continue till nearly $t=400$, after that we can see that the dark purple starts to merge from the middle section and continues to grow wider to the point that covers the whole domain at $t=700$ and remains the same by the end of the time, which means that the same amount of the gang densities $u$ and $v$ are presented in each location of the domain. The same happens for their corresponding graffiti $z$ and $w$ in the bottom row, where there is no amount of graffiti at the beginning (at =$t0)$, then they gradually increase  when the gangs start to mark their own territories and by the time $t=700$ each location is marked by the same amount of graffiti from each gang. It is evident from Figure~\ref{fig1ab} that segregation does not happen in this case. Before moving on to the next example, we examine this case more closely in Figures~\ref{fig1c}-\ref{fig1d}, by taking a snapshot along the line $y=x$ over time. Figure~\ref{fig1c}(a) shows the initial conditions at $t=0$, when the time evolves it seems that the approximate solutions converge toward constant steady-states already at $t=400$ in Figure~\ref{fig1c}(c), but the close up in Figure~\ref{fig1d}(a),(b) shows that the gang density $u$ and its corresponding graffiti $z$ are slightly larger in the first half of the domain $[-6,0)$ and slightly smaller in the second half $(0, 6]$ than the gang density $v$ and its corresponding graffiti $w$; however, they tend to stabilize at around $0.121817$ and $0.108588$, respectively, at time $t=700$ and remain the same through the rest of the time, which corresponds well to the results shown in Figure~\ref{fig1ab}.

This case has been studied for (\ref{prob:2eq}) in \citep{BarbaroEtAlAnalysisCrossdiffusionModel2021}, where the authors also show that the approximate solutions converge towards steady states; however they also report that segregation might not happen for that problem, at least not in the considered solution framework.

\begin{figure}[H]
	\centering
	\begin{subfigure}{0.155\textwidth}
		\includegraphics[width=\textwidth]{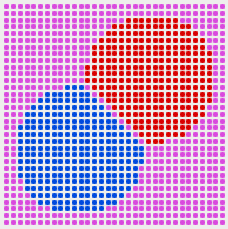}
		\caption{t = 0}
	\end{subfigure}
	\begin{subfigure}{0.155\textwidth}
		\includegraphics[width=\textwidth]{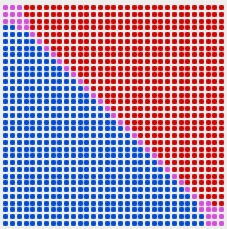}
		\caption{t = 400}
	\end{subfigure}
	\begin{subfigure}{0.155\textwidth}
		\includegraphics[width=\textwidth]{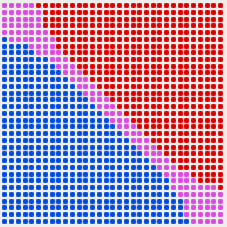}
		\caption{t = 500}
	\end{subfigure}
	\begin{subfigure}{0.155\textwidth}
		\includegraphics[width=\textwidth]{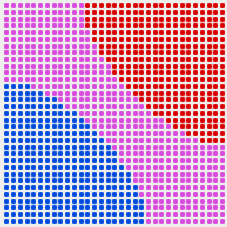}
		\caption{t = 600}
	\end{subfigure}
	\begin{subfigure}{0.155\textwidth}
		\includegraphics[width=\textwidth]{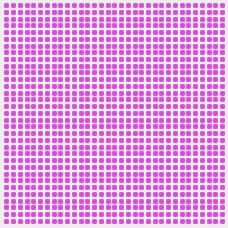}
		\caption{t = 700}
	\end{subfigure}
	\begin{subfigure}{0.155\textwidth}
		\includegraphics[width=\textwidth]{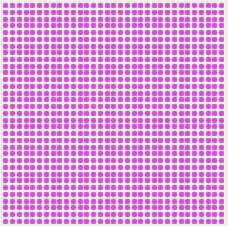}
		\caption{t = 1000}
	\end{subfigure}
	
		\begin{subfigure}{0.155\textwidth}
		\includegraphics[width=\textwidth]{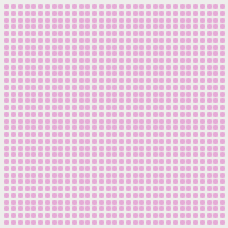}
		\caption{t = 0}
	\end{subfigure}
	\begin{subfigure}{0.155\textwidth}
		\includegraphics[width=\textwidth]{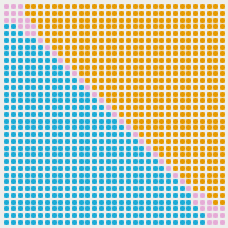}
		\caption{t = 400}
	\end{subfigure}
	\begin{subfigure}{0.155\textwidth}
		\includegraphics[width=\textwidth]{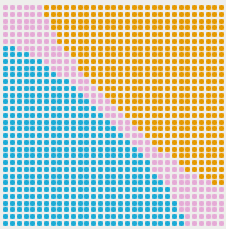}
		\caption{t = 500}
	\end{subfigure}
	\begin{subfigure}{0.155\textwidth}
		\includegraphics[width=\textwidth]{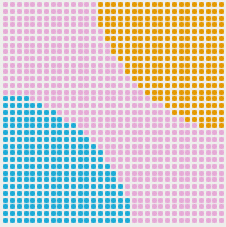}
		\caption{t = 600}
	\end{subfigure}
	\begin{subfigure}{0.155\textwidth}
		\includegraphics[width=\textwidth]{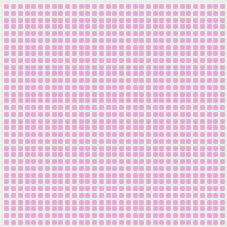}
		\caption{t = 700}
	\end{subfigure}
	\begin{subfigure}{0.155\textwidth}
		\includegraphics[width=\textwidth]{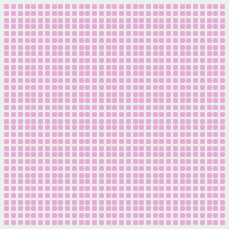}
		\caption{t = 1000}
	\end{subfigure}
	\caption{\small
	Numerical solutions for the model problem (\ref{prob}) obtained using the standard Galerkin method, at different time instant when $D_u=D_v=\chi_u=\chi_v =0.25$. For the choice of colors, see the beginning of Subsection~\ref{sep-study}.
	 }
	\label{fig1ab}
\end{figure} 

\begin{figure}[H]
	\centering
		\begin{subfigure}{0.155\textwidth}
		\includegraphics[width=\textwidth]{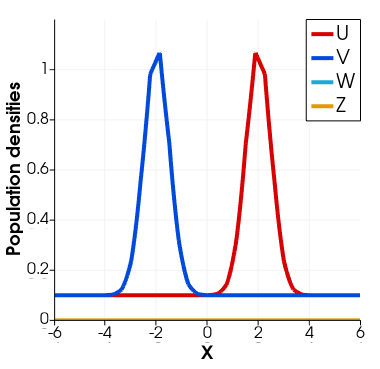}
		\caption{t = 0}
	\end{subfigure}
	\begin{subfigure}{0.155\textwidth}
		\includegraphics[width=\textwidth]{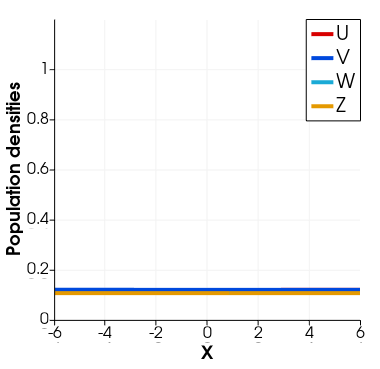}
		\caption{t = 400}
	\end{subfigure}
	\begin{subfigure}{0.155\textwidth}
		\includegraphics[width=\textwidth]{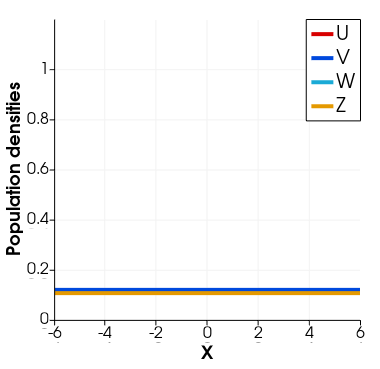}
		\caption{t = 500}
	\end{subfigure}
	\begin{subfigure}{0.155\textwidth}
		\includegraphics[width=\textwidth]{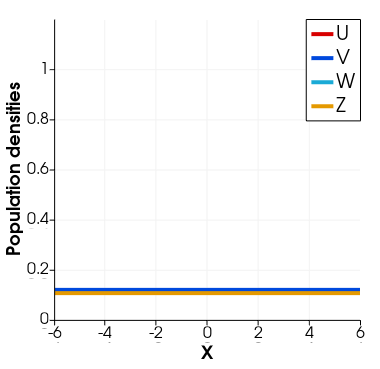}
		\caption{t = 600}
	\end{subfigure}
	\begin{subfigure}{0.155\textwidth}
		\includegraphics[width=\textwidth]{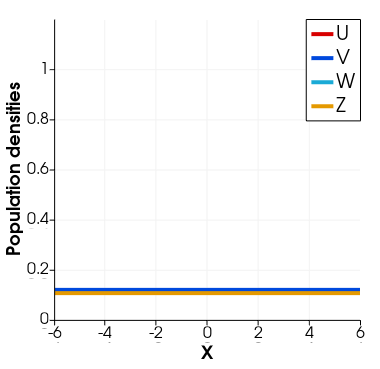}
		\caption{t = 700}
	\end{subfigure}
	\begin{subfigure}{0.155\textwidth}
		\includegraphics[width=\textwidth]{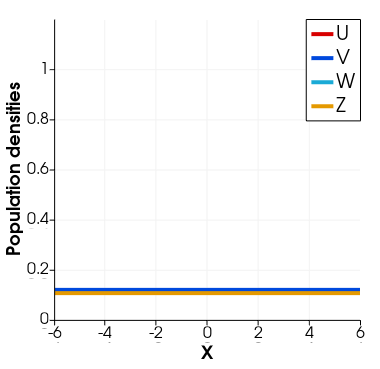}
		\caption{t = 1000}
	\end{subfigure}
	\caption{\small The size of gang populations $u$,$v$ and the amount of their corresponding graffiti $z$ and $w$ over time along the line $y=x$ at different time instant $t=0, 400, 500, 600, 700, 1000$ when $D_u=D_v=\chi_u=\chi_v =0.25.$}
	\label{fig1c}
\end{figure} 

\begin{figure}[H]
	\centering
	\begin{subfigure}{0.155\textwidth}
		\includegraphics[width=\textwidth]{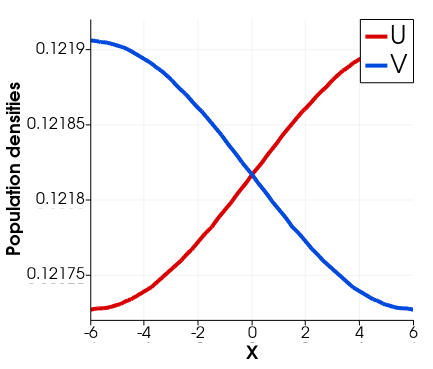}
		\caption{t = 400}
	\end{subfigure}
	\begin{subfigure}{0.155\textwidth}
		\includegraphics[width=\textwidth]{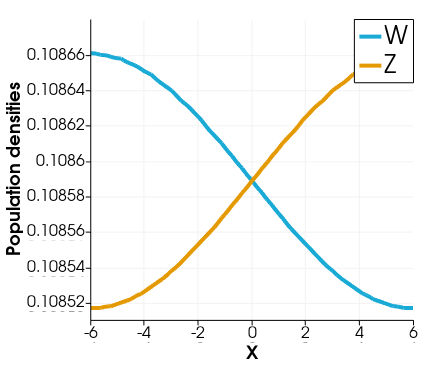}
		\caption{t = 400}
	\end{subfigure}
	\begin{subfigure}{0.155\textwidth}
		\includegraphics[width=\textwidth]{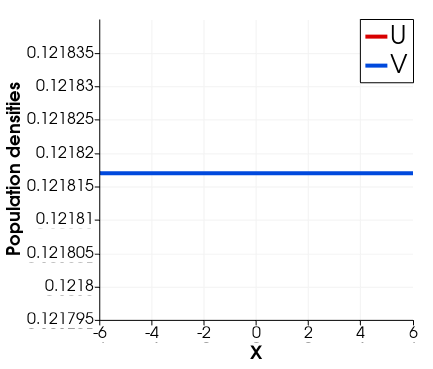}
		\caption{t = 700}
	\end{subfigure}
	\begin{subfigure}{0.155\textwidth}
		\includegraphics[width=\textwidth]{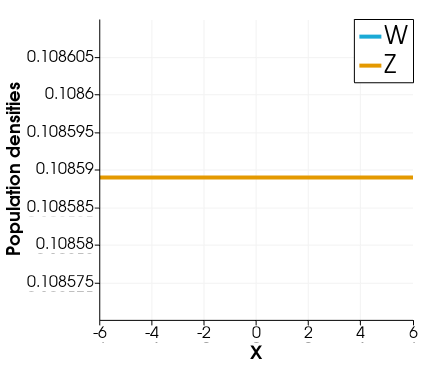}
		\caption{t = 700}
	\end{subfigure}
	\begin{subfigure}{0.155\textwidth}
		\includegraphics[width=\textwidth]{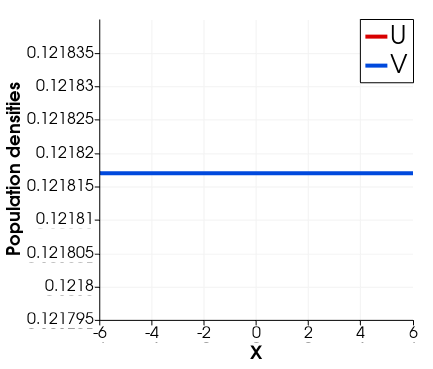}
		\caption{t = 1000}
	\end{subfigure}
	\begin{subfigure}{0.155\textwidth}
		\includegraphics[width=\textwidth]{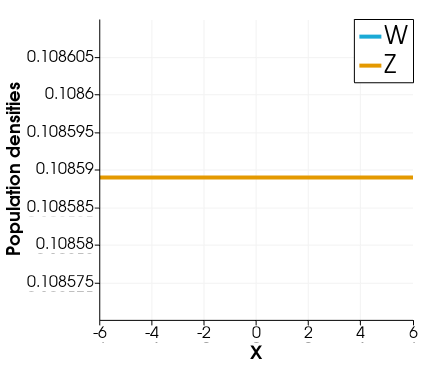}
		\caption{t = 1000}
	\end{subfigure}
	\caption{\small
		 Close up of populations densities along the line $y=x$ at different time instant $t= 400, 700, 1000$ when $D_u=D_v=\chi_u=\chi_v =0.25.$
	 }
	\label{fig1d}
\end{figure} 

Next, we set $D_u=D_v=3$ and $\chi_u=\chi_v=0.25$ and again use the standard Galerkin method \eqref{eq1u}--\eqref{eq1z}. Figure~\ref{fig2ab} shows that, when the time evolves gangs start to spread inside the domain and no accumulation seems to happen, the dark purple in the top row and light purple in the bottom row gradually cover the entire domain by the time $t=200$, i.e., the gangs are evenly distributed in each location and produce the same amount of graffiti. It is clear from Figure~\ref{fig2ab} that no separation happens when the problem is diffusion-dominated. The snapshots of the results are presented along the line $y=x$ in Figures~\ref{fig2c} and Figure~\ref{fig2d}, where we observe that the gang densities and their corresponding graffiti converge toward constant steady states and seem to stabilize around $0.121818$ and $0.10858$, respectively, by the time $t=200$.

A similar behavior has been observed in \citep{AlsenafiBarbaroConvectionDiffusionModel2018} for an agent-based model:
As long as a system parameter $\beta$ corresponding to $\chi_u, \chi_v$ in \eqref{prob} is sufficiently small, the gangs completely mix inside the domain and the expected densities converge toward constant steady states.
\begin{figure}[H]
	\centering
	\begin{subfigure}{0.155\textwidth}
		\includegraphics[width=\textwidth]{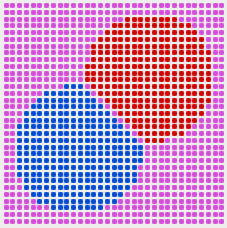}
		\caption{t = 0}
	\end{subfigure}
	\begin{subfigure}{0.155\textwidth}
		\includegraphics[width=\textwidth]{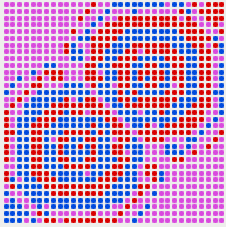}
		\caption{t = 50}
	\end{subfigure}
	\begin{subfigure}{0.155\textwidth}
		\includegraphics[width=\textwidth]{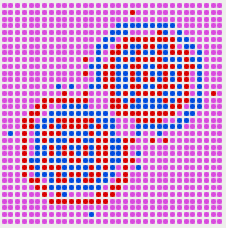}
		\caption{t = 75}
	\end{subfigure}
	\begin{subfigure}{0.155\textwidth}
		\includegraphics[width=\textwidth]{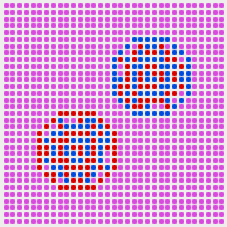}
		\caption{t = 100}
	\end{subfigure}
	\begin{subfigure}{0.155\textwidth}
		\includegraphics[width=\textwidth]{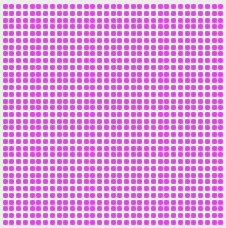}
		\caption{t = 200}
	\end{subfigure}
	\begin{subfigure}{0.155\textwidth}
		\includegraphics[width=\textwidth]{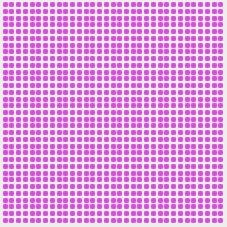}
		\caption{t = 1000}
	\end{subfigure}
	
		\begin{subfigure}{0.155\textwidth}
		\includegraphics[width=\textwidth]{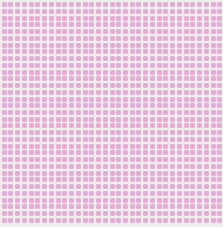}
		\caption{t = 0}
	\end{subfigure}
	\begin{subfigure}{0.155\textwidth}
		\includegraphics[width=\textwidth]{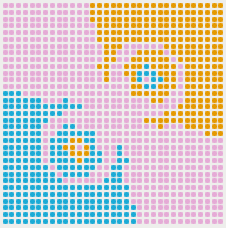}
		\caption{t = 50}
	\end{subfigure}
	\begin{subfigure}{0.155\textwidth}
		\includegraphics[width=\textwidth]{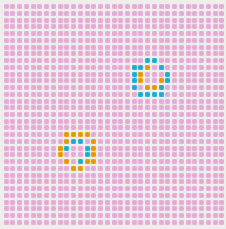}
		\caption{t = 75}
	\end{subfigure}
	\begin{subfigure}{0.155\textwidth}
		\includegraphics[width=\textwidth]{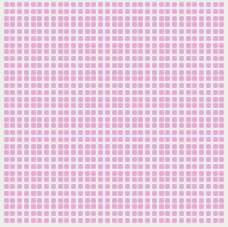}
		\caption{t = 100}
	\end{subfigure}
	\begin{subfigure}{0.155\textwidth}
		\includegraphics[width=\textwidth]{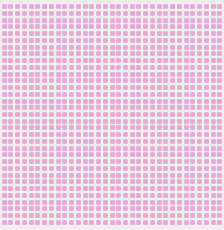}
		\caption{t = 200}
	\end{subfigure}
	\begin{subfigure}{0.155\textwidth}
		\includegraphics[width=\textwidth]{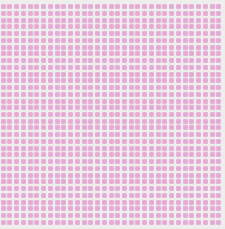}
		\caption{t = 1000}
	\end{subfigure}
	\caption{\small
	Numerical solutions for the model problem (\ref{prob}) obtained using the standard Galerkin method, at different time instant $t=0, 50, 75, 100, 200, 1000$ when $D_u=D_v=3.0$ and $\chi_u=\chi_v =0.25$. For the choice of colors, see the beginning of Subsection~\ref{sep-study}.
	 }
	\label{fig2ab}
\end{figure} 

\begin{figure}[H]
	\centering
		\begin{subfigure}{0.155\textwidth}
		\includegraphics[width=\textwidth]{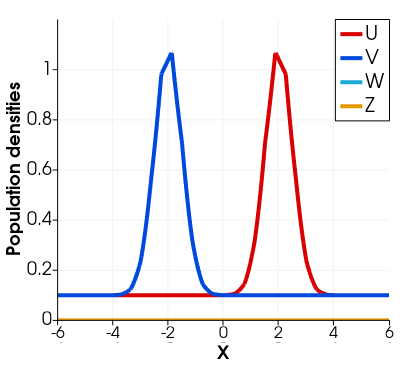}
		\caption{t = 0}
	\end{subfigure}
	\begin{subfigure}{0.155\textwidth}
		\includegraphics[width=\textwidth]{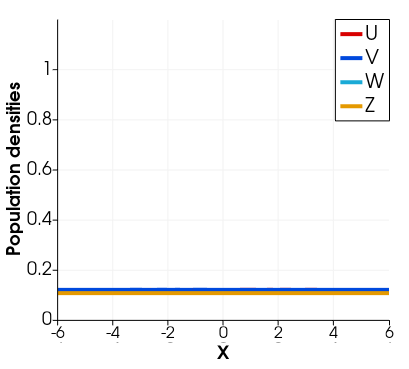}
		\caption{t = 50}
	\end{subfigure}
	\begin{subfigure}{0.155\textwidth}
		\includegraphics[width=\textwidth]{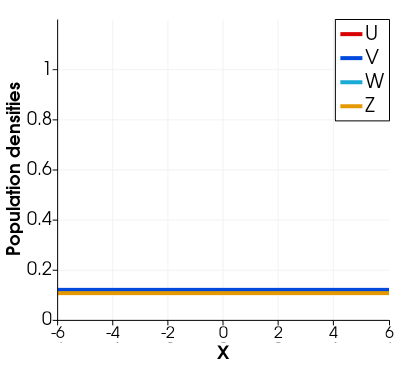}
		\caption{t = 75}
	\end{subfigure}
	\begin{subfigure}{0.155\textwidth}
		\includegraphics[width=\textwidth]{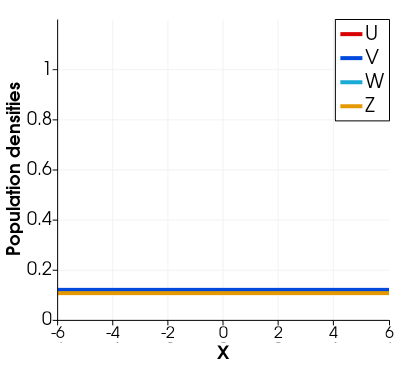}
		\caption{t = 100}
	\end{subfigure}
	\begin{subfigure}{0.155\textwidth}
		\includegraphics[width=\textwidth]{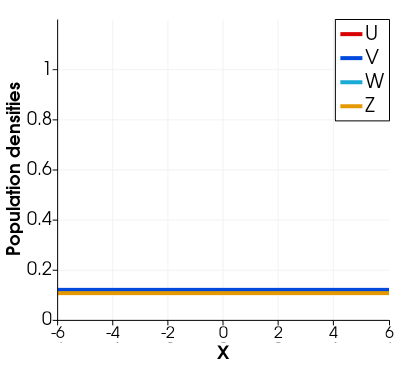}
		\caption{t = 200}
	\end{subfigure}
	\begin{subfigure}{0.155\textwidth}
		\includegraphics[width=\textwidth]{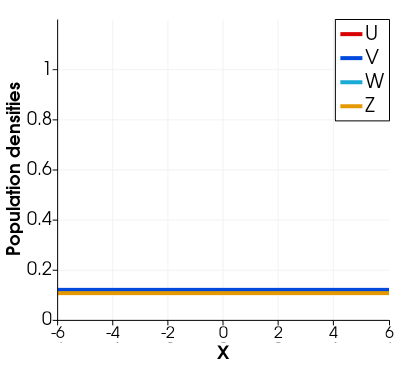}
		\caption{t = 1000}
	\end{subfigure}
	\caption{\small The amount of gang densities $u$,$v$ and their corresponding graffiti $z$ and $w$ over time along the line $y=x$ at different time instant $t=0, 50, 75, 100, 200, 1000$ when $D_u=D_v=3.0$ and $\chi_u=\chi_v =0.25.$}
	\label{fig2c}
\end{figure} 

\begin{figure}[H]
	\centering
	\begin{subfigure}{0.155\textwidth}
		\includegraphics[width=\textwidth]{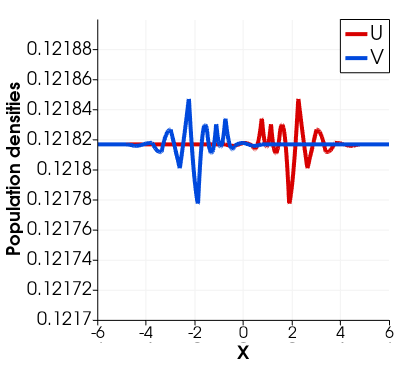}
		\caption{t = 100}
	\end{subfigure}
	\begin{subfigure}{0.155\textwidth}
		\includegraphics[width=\textwidth]{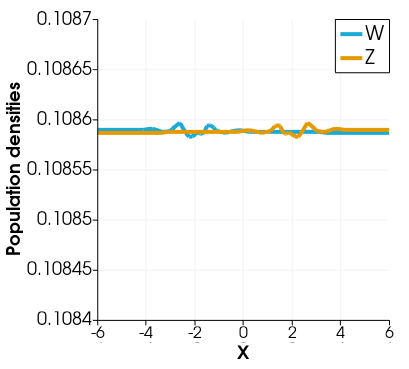}
		\caption{t = 100}
	\end{subfigure}
	\begin{subfigure}{0.155\textwidth}
		\includegraphics[width=\textwidth]{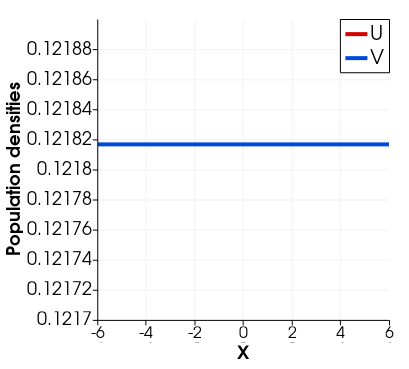}
		\caption{t = 200}
	\end{subfigure}
	\begin{subfigure}{0.155\textwidth}
		\includegraphics[width=\textwidth]{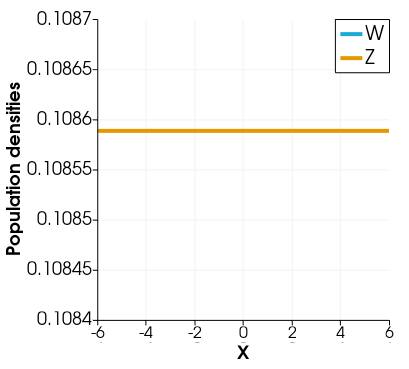}
		\caption{t = 200}
	\end{subfigure}
	\begin{subfigure}{0.155\textwidth}
		\includegraphics[width=\textwidth]{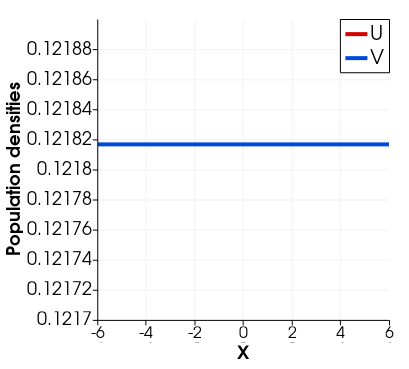}
		\caption{t = 1000}
	\end{subfigure}
	\begin{subfigure}{0.155\textwidth}
		\includegraphics[width=\textwidth]{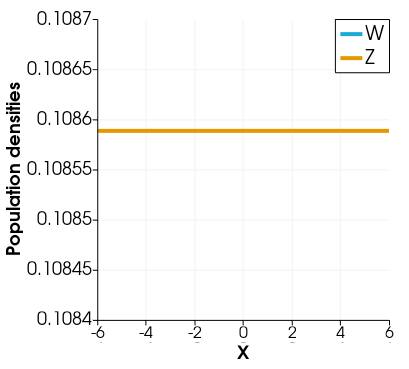}
		\caption{t = 1000}
	\end{subfigure}
	\caption{\small
		 Close up of population densities along the line $y=x$ at different time instant $t= 100, 200, 1000$ when $D_u=D_v=3.0$ and $\chi_u=\chi_v =0.25.$
	 }
	\label{fig2d}
\end{figure} 

\subsubsection{Nonhomogeneous limit functions}\label{sec:numeric_conv_nonsss}
For our next example, we consider $D_u=D_v=0.25$ and $\chi_u=\chi_v=3$. As shown in Figure~\ref{fig3}, the standard Galerkin method (\ref{eq1u})--(\ref{eq1z}) produces significant spurious oscillation in the entire domain which leads to negative nonphysical approximate solutions, and the simulation blows up at around $t=35$. As explained in the previous section, as a remedy we apply the FEM-FCT scheme to reduce the oscillations and preserve the positivity of the solutions at all time. The top row of Figure~\ref{fig4ab} shows the evolutionary movement of the gangs inside the domain. We observe that after a certain amount of time, red and blue clusters start to form, featuring some symmetries which track back to the symmetry of the initial data and the fact that the parameters are the same for both gangs. The same kind of pattern for graffiti densities emerge in the bottom row as each gang marks their own territory. The population densities seem to almost stabilize around $t=500$ and remain the same till the end of the time interval $T=1000$. Keeping the diffusion coefficient as before and changing the amount of convection coefficient to $\chi_u=\chi_v=10$, a different segregation pattern can be seen in Figure~\ref{fig5ab}. Therefore, we conclude that larger amount of convection in the model leads to directional movement of the gangs and creates partially separated regions. The snapshots of the results obtained with the FEM-FCT for both cases are displayed in Figures~\ref{fig4c} and \ref{fig5c}, respectively.

\begin{figure}[H]
	\centering
		\begin{subfigure}{0.12\textwidth}
		\includegraphics[width=\textwidth]{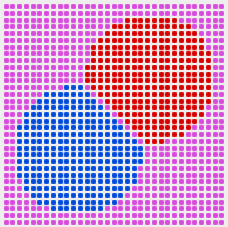}
		\caption{t = 0}
	\end{subfigure}
	\begin{subfigure}{0.12\textwidth}
		\includegraphics[width=\textwidth]{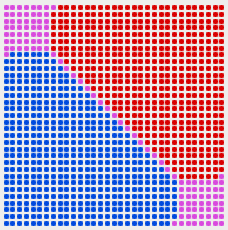}
		\caption{t = 5}
	\end{subfigure}
	\begin{subfigure}{0.12\textwidth}
		\includegraphics[width=\textwidth]{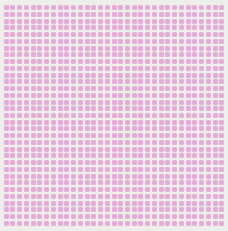}
		\caption{t = 0}
	\end{subfigure}
	\begin{subfigure}{0.12\textwidth}
		\includegraphics[width=\textwidth]{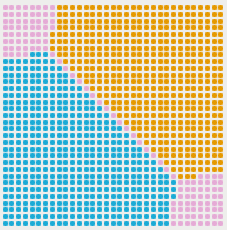}
		\caption{t = 5}
	\end{subfigure}
	\begin{subfigure}{0.155\textwidth}
		\includegraphics[width=\textwidth]{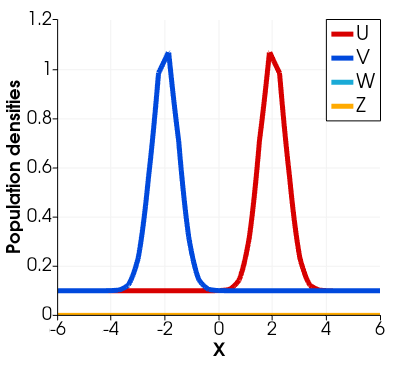}
		\caption{t = 0}
	\end{subfigure}
	\begin{subfigure}{0.155\textwidth}
		\includegraphics[width=\textwidth]{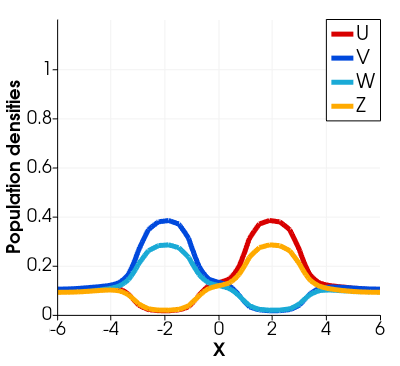}
		\caption{t = 5}
	\end{subfigure}
	\begin{subfigure}{0.155\textwidth}
		\includegraphics[width=\textwidth]{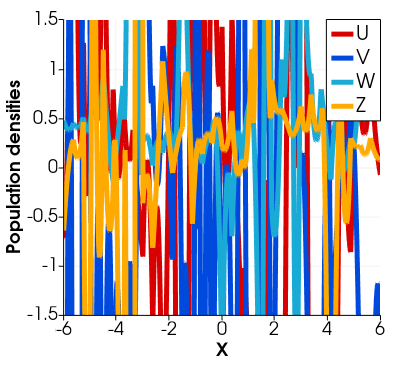}
		\caption{t = 35}
	\end{subfigure}
	\caption{\small Numerical solutions and their snapshots for the model problem (\ref{prob}) obtained using the standard Galerkin method, at different time instant $t=0, 5, 35$ when $D_u=D_v=0.25$ and $\chi_u=\chi_v =3.0$.}
	\label{fig3}
\end{figure} 

\begin{figure}[H]
	\centering
	\begin{subfigure}{0.16\textwidth}
		\includegraphics[width=\textwidth]{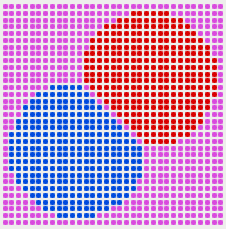}
		\caption{t = 0}
	\end{subfigure}
	\begin{subfigure}{0.155\textwidth}
		\includegraphics[width=\textwidth]{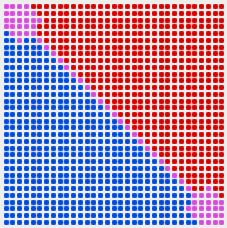}
		\caption{t = 5}
	\end{subfigure}
	\begin{subfigure}{0.155\textwidth}
		\includegraphics[width=\textwidth]{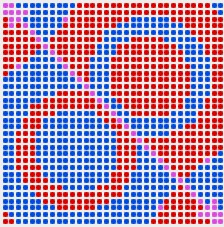}
		\caption{t = 50}
	\end{subfigure}
	\begin{subfigure}{0.155\textwidth}
		\includegraphics[width=\textwidth]{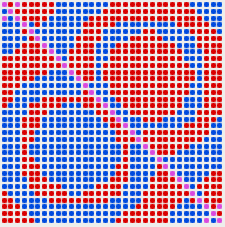}
		\caption{t = 100}
	\end{subfigure}
	\begin{subfigure}{0.155\textwidth}
		\includegraphics[width=\textwidth]{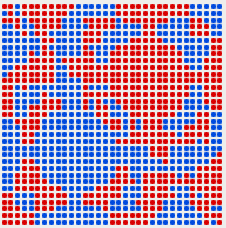}
		\caption{t = 500}
	\end{subfigure}
	\begin{subfigure}{0.155\textwidth}
		\includegraphics[width=\textwidth]{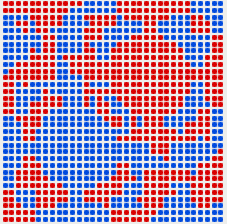}
		\caption{t = 1000}
	\end{subfigure}
	
		\begin{subfigure}{0.155\textwidth}
		\includegraphics[width=\textwidth]{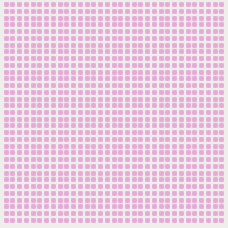}
		\caption{t = 0}
	\end{subfigure}
	\begin{subfigure}{0.155\textwidth}
		\includegraphics[width=\textwidth]{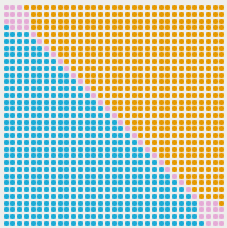}
		\caption{t = 5}
	\end{subfigure}
	\begin{subfigure}{0.155\textwidth}
		\includegraphics[width=\textwidth]{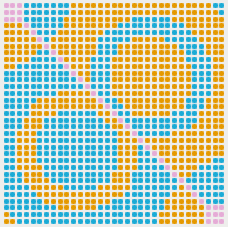}
		\caption{t = 50}
	\end{subfigure}
	\begin{subfigure}{0.155\textwidth}
		\includegraphics[width=\textwidth]{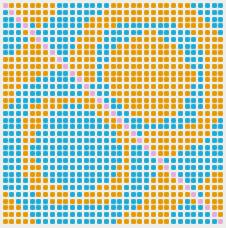}
		\caption{t = 100}
	\end{subfigure}
	\begin{subfigure}{0.155\textwidth}
		\includegraphics[width=\textwidth]{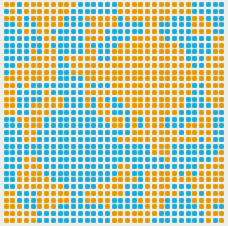}
		\caption{t = 500}
	\end{subfigure}
	\begin{subfigure}{0.155\textwidth}
		\includegraphics[width=\textwidth]{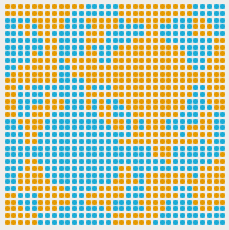}
		\caption{t = 1000}
	\end{subfigure}
	\caption{\small
	Numerical solutions for the model problem (\ref{prob}) obtained using the FEM-FCT method, at different time instant $t=0, 5, 50, 100, 500, 1000$ when $D_u=D_v= 0.25$ and $\chi_u=\chi_v =3.0$. For the choice of colors, see the beginning of Subsection~\ref{sep-study}.
	 }
	\label{fig4ab}
\end{figure} 

\begin{figure}[H]
	\centering
		\begin{subfigure}{0.155\textwidth}
		\includegraphics[width=\textwidth]{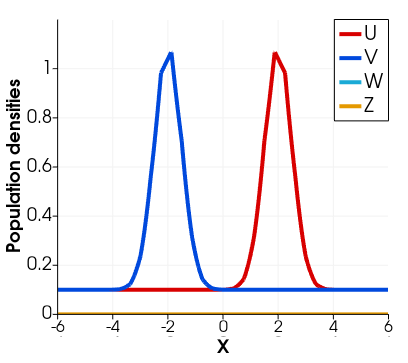}
		\caption{t = 0}
	\end{subfigure}
	\begin{subfigure}{0.155\textwidth}
		\includegraphics[width=\textwidth]{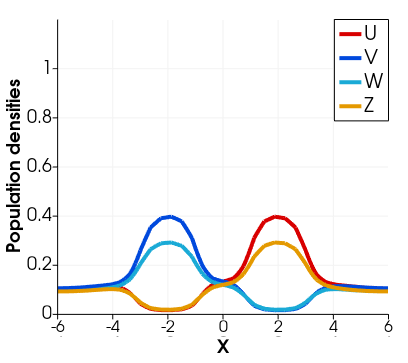}
		\caption{t = 5}
	\end{subfigure}
	\begin{subfigure}{0.155\textwidth}
		\includegraphics[width=\textwidth]{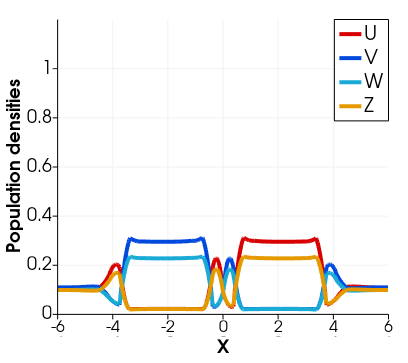}
		\caption{t = 50}
	\end{subfigure}
	\begin{subfigure}{0.155\textwidth}
		\includegraphics[width=\textwidth]{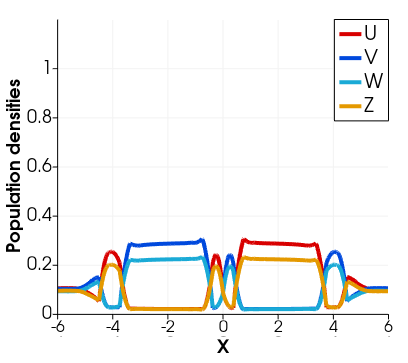}
		\caption{t = 100}
	\end{subfigure}
	\begin{subfigure}{0.155\textwidth}
		\includegraphics[width=\textwidth]{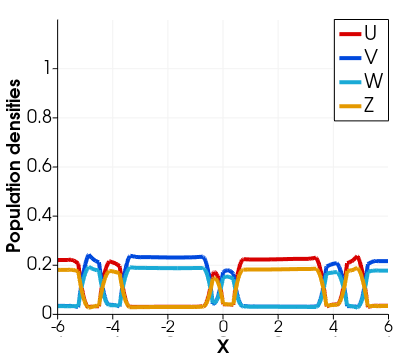}
		\caption{t = 500}
	\end{subfigure}
	\begin{subfigure}{0.155\textwidth}
		\includegraphics[width=\textwidth]{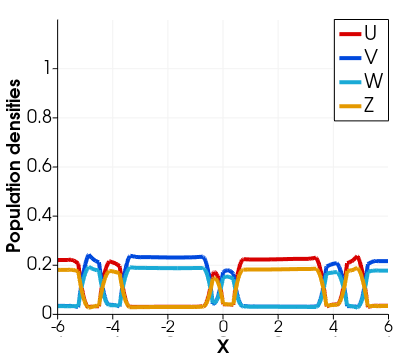}
		\caption{t = 1000}
	\end{subfigure}
	\caption{\small The size of gang populations $u$,$v$ and the amount of their corresponding graffiti $z$ and $w$ over time along the line $y=x$ at different time instant $t=0, 5, 50, 100, 500, 1000$ when $D_u=D_v=0.25$ $\chi_u=\chi_v =3.0.$}
	\label{fig4c}
\end{figure} 

\begin{figure}[H]
	\centering
	\begin{subfigure}{0.155\textwidth}
		\includegraphics[width=\textwidth]{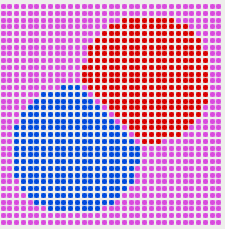}
		\caption{t = 0}
	\end{subfigure}
	\begin{subfigure}{0.155\textwidth}
		\includegraphics[width=\textwidth]{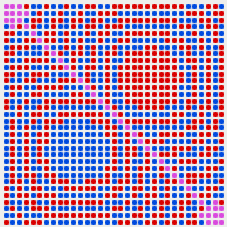}
		\caption{t = 5}
	\end{subfigure}
	\begin{subfigure}{0.155\textwidth}
		\includegraphics[width=\textwidth]{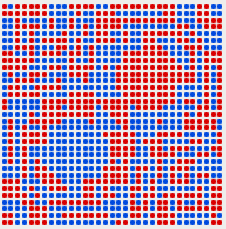}
		\caption{t = 50}
	\end{subfigure}
	\begin{subfigure}{0.155\textwidth}
		\includegraphics[width=\textwidth]{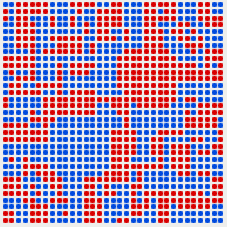}
		\caption{t = 100}
	\end{subfigure}
	\begin{subfigure}{0.155\textwidth}
		\includegraphics[width=\textwidth]{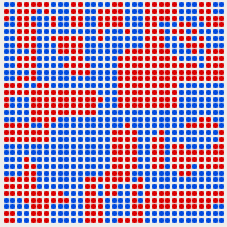}
		\caption{t = 500}
	\end{subfigure}
	\begin{subfigure}{0.155\textwidth}
		\includegraphics[width=\textwidth]{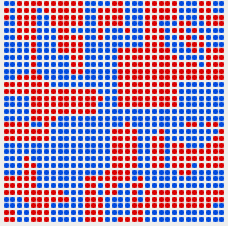}
		\caption{t = 1000}
	\end{subfigure}
	
		\begin{subfigure}{0.155\textwidth}
		\includegraphics[width=\textwidth]{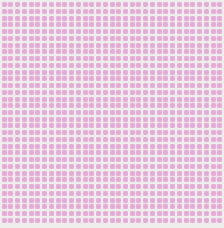}
		\caption{t = 0}
	\end{subfigure}
	\begin{subfigure}{0.155\textwidth}
		\includegraphics[width=\textwidth]{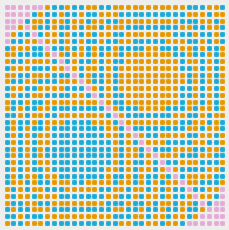}
		\caption{t = 5}
	\end{subfigure}
	\begin{subfigure}{0.155\textwidth}
		\includegraphics[width=\textwidth]{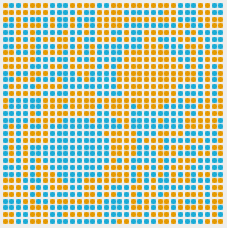}
		\caption{t = 50}
	\end{subfigure}
	\begin{subfigure}{0.155\textwidth}
		\includegraphics[width=\textwidth]{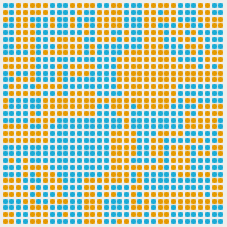}
		\caption{t = 100}
	\end{subfigure}
	\begin{subfigure}{0.155\textwidth}
		\includegraphics[width=\textwidth]{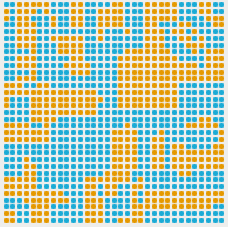}
		\caption{t = 500}
	\end{subfigure}
	\begin{subfigure}{0.155\textwidth}
		\includegraphics[width=\textwidth]{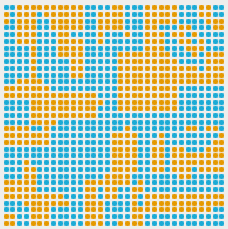}
		\caption{t = 1000}
	\end{subfigure}
	\caption{\small
	Numerical solutions for the model problem (\ref{prob}) obtained using the FEM-FCT method, at different time instant $t=0, 5, 50, 100, 500, 1000$ when $D_u=D_v= 0.25$ and $\chi_u=\chi_v =10$. For the choice of colors, see the beginning of Subsection~\ref{sep-study}.
	 }
	\label{fig5ab}
\end{figure}

\begin{figure}[H]
	\centering
		\begin{subfigure}{0.155\textwidth}
		\includegraphics[width=\textwidth]{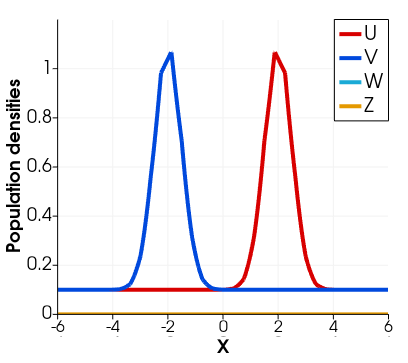}
		\caption{t = 0}
	\end{subfigure}
	\begin{subfigure}{0.155\textwidth}
		\includegraphics[width=\textwidth]{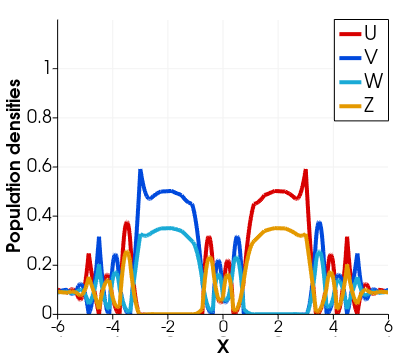}
		\caption{t = 5}
	\end{subfigure}
	\begin{subfigure}{0.155\textwidth}
		\includegraphics[width=\textwidth]{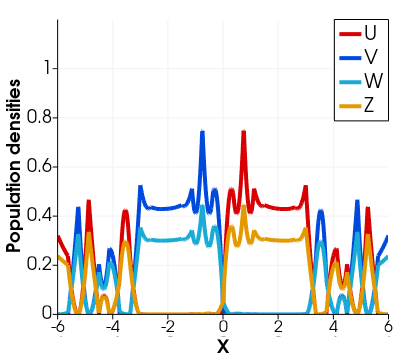}
		\caption{t = 50}
	\end{subfigure}
	\begin{subfigure}{0.155\textwidth}
		\includegraphics[width=\textwidth]{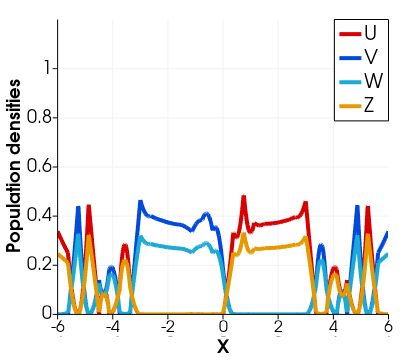}
		\caption{t = 100}
	\end{subfigure}
	\begin{subfigure}{0.155\textwidth}
		\includegraphics[width=\textwidth]{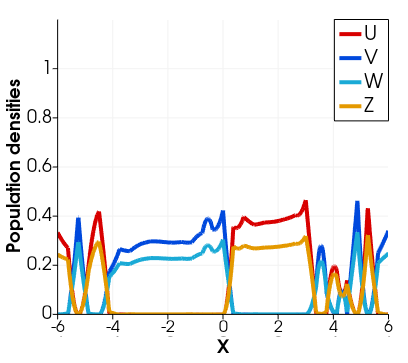}
		\caption{t = 500}
	\end{subfigure}
	\begin{subfigure}{0.155\textwidth}
		\includegraphics[width=\textwidth]{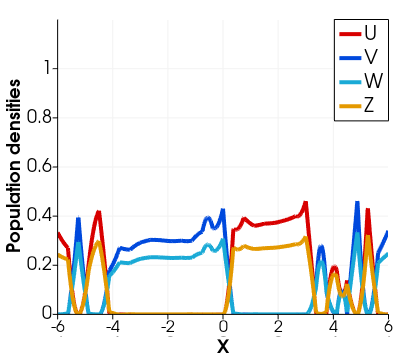}
		\caption{t = 1000}
	\end{subfigure}
	\caption{\small The size of gang populations $u$,$v$ and the amount of their corresponding graffiti $z$ and $w$ over time along the line $y=x$ at different time instant $t=0, 5, 50, 100, 500, 1000$ when $D_u=D_v=0.25$ and $\chi_u=\chi_v =10.$}
	\label{fig5c}
\end{figure}

Comparing our results with the stochastic simulation in the agent-based model in \citep{AlsenafiBarbaroConvectionDiffusionModel2018}, we notice some similarities between our experiments' outcome and their reported results, namely that large values of $\beta$ (the convection coefficient in their model) lead to a well-segregated phase. However, in each patch dominated by red or blue in Figures~\ref{fig4ab} and \ref{fig5ab}, there is still some amount of the opposite group with much smaller -- but still positive -- density present (cf.\ Figures~\ref{fig4c} and \ref{fig5c}), illustrating partial (as opposed to complete) segregation. Moreover, unlike as \citep{AlsenafiBarbaroConvectionDiffusionModel2018}, we do not observe any coarsening of the regions: Once territories are established, they are rather stable.

Next, we consider fixed diffusion rates as in the previous examples, i.e., $ D_u=D_v=0.25$, but different convection coefficients $\chi_u=2$ and $\chi_v=4$, meaning that the second gang's movement is more affected by the first gang's graffiti than vice-versa. As shown in Figure~\ref{fig6ab}, the gang with density $v$ starts to cluster together very tightly in small patches and partially separates from the opposite group at around $t=200$ while the other gangs dominates on larger regions. That is, we observe that the segregation pattern for both groups is quite different here, in contrast to the previous examples. The snapshots of concentration densities over time along the line $y=x$ are displayed in Figure~\ref{fig6c}. As it can be seen, the concentration density $v$ and its corresponding graffiti $w$ occupy less parts of the domain but with higher amount of densities (almost around 0.4 and 0.3 in their dominated regions, respectively), however, the gang population $u$ spreads more freely in the wider range but with less density (nearly under 0.2). Of course, both gangs have the same total amount of mass throughout evolution since there are no source or sink terms in the model.

In \citep[Subsection~5.1]{Alsenafi2021}, a similar behavior, namely that the gang with the largest graffiti avoidance rate concentrates on smaller regions,
has been observed for an agent-based model with three rivaling gangs.

\begin{figure}[H]
	\centering
	\begin{subfigure}{0.155\textwidth}
		\includegraphics[width=\textwidth]{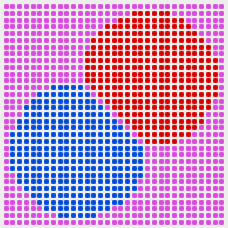}
		\caption{t = 0}
	\end{subfigure}
	\begin{subfigure}{0.155\textwidth}
		\includegraphics[width=\textwidth]{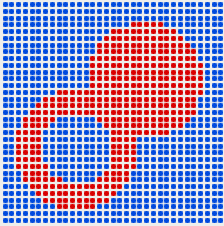}
		\caption{t = 50}
	\end{subfigure}
	\begin{subfigure}{0.155\textwidth}
		\includegraphics[width=\textwidth]{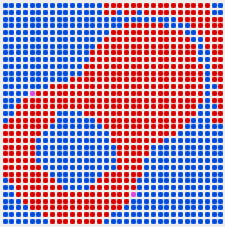}
		\caption{t = 100}
	\end{subfigure}
	\begin{subfigure}{0.155\textwidth}
		\includegraphics[width=\textwidth]{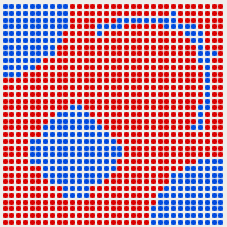}
		\caption{t = 200}
	\end{subfigure}
	\begin{subfigure}{0.155\textwidth}
		\includegraphics[width=\textwidth]{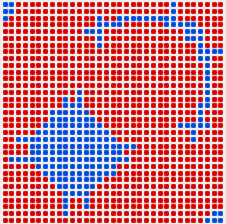}
		\caption{t = 400}
	\end{subfigure}
	\begin{subfigure}{0.155\textwidth}
		\includegraphics[width=\textwidth]{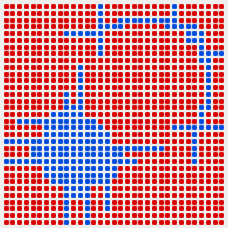}
		\caption{t = 1000}
	\end{subfigure}
	
		\begin{subfigure}{0.155\textwidth}
		\includegraphics[width=\textwidth]{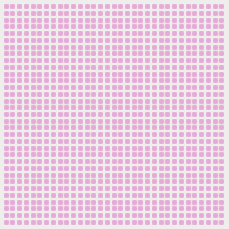}
		\caption{t = 0}
	\end{subfigure}
	\begin{subfigure}{0.155\textwidth}
		\includegraphics[width=\textwidth]{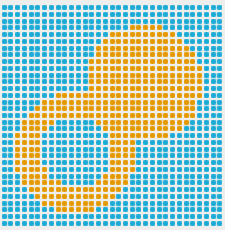}
		\caption{t = 50}
	\end{subfigure}
	\begin{subfigure}{0.155\textwidth}
		\includegraphics[width=\textwidth]{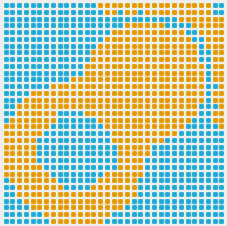}
		\caption{t = 100}
	\end{subfigure}
	\begin{subfigure}{0.155\textwidth}
		\includegraphics[width=\textwidth]{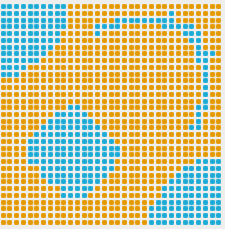}
		\caption{t = 200}
	\end{subfigure}
	\begin{subfigure}{0.155\textwidth}
		\includegraphics[width=\textwidth]{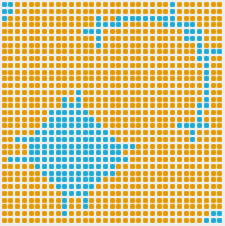}
		\caption{t = 400}
	\end{subfigure}
	\begin{subfigure}{0.155\textwidth}
		\includegraphics[width=\textwidth]{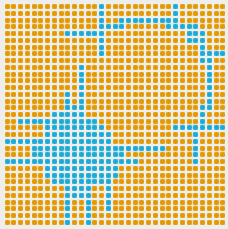}
		\caption{t = 1000}
	\end{subfigure}
	\caption{\small
	Numerical solutions for the model problem (\ref{prob}) obtained using the FEM-FCT method, at different time instant $t=0, 50, 100, 200, 400, 1000$ when $D_u=D_v= 0.25$, $\chi_u=2.0 $ and $\chi_v =4.0$. For the choice of colors, see the beginning of Subsection~\ref{sep-study}.
	 }
	\label{fig6ab}
\end{figure} 

\begin{figure}[H]
	\centering
		\begin{subfigure}{0.155\textwidth}
		\includegraphics[width=\textwidth]{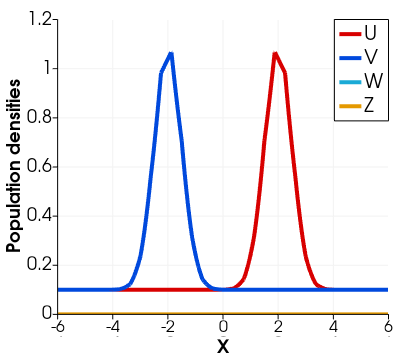}
		\caption{t = 0}
	\end{subfigure}
	\begin{subfigure}{0.155\textwidth}
		\includegraphics[width=\textwidth]{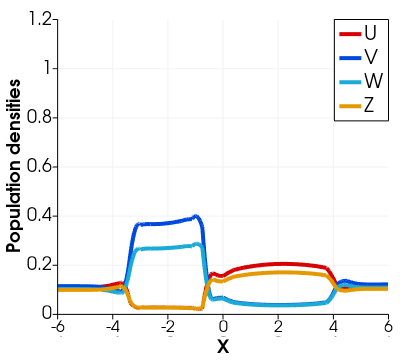}
		\caption{t = 50}
	\end{subfigure}
	\begin{subfigure}{0.155\textwidth}
		\includegraphics[width=\textwidth]{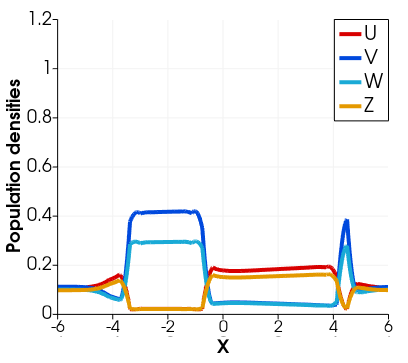}
		\caption{t = 100}
	\end{subfigure}
	\begin{subfigure}{0.155\textwidth}
		\includegraphics[width=\textwidth]{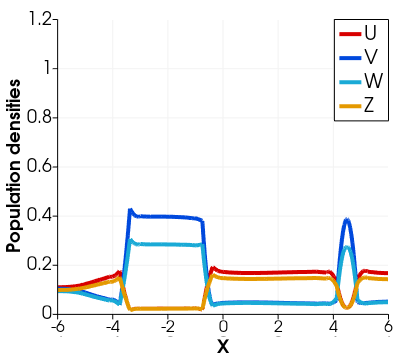}
		\caption{t = 200}
	\end{subfigure}
	\begin{subfigure}{0.155\textwidth}
		\includegraphics[width=\textwidth]{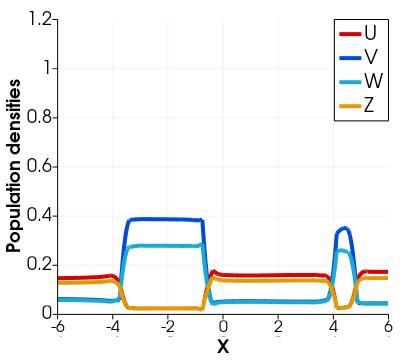}
		\caption{t = 400}
	\end{subfigure}
	\begin{subfigure}{0.155\textwidth}
		\includegraphics[width=\textwidth]{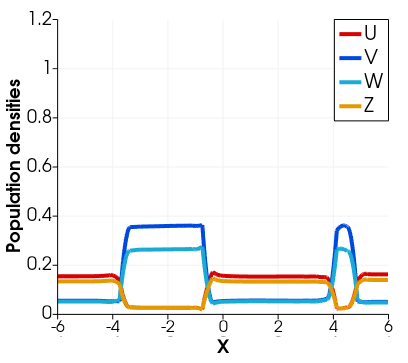}
		\caption{t = 1000}
	\end{subfigure}
	\caption{\small The size of gang populations $u$,$v$ and the amount of their corresponding graffiti $z$ and $w$ over time along the line $y=x$ at different time instant $t=0, 50, 100, 200, 400, 1000$ when $D_u=D_v=0.25$, $\chi_u= 2.0$ and $\chi_v =4.0.$}
	\label{fig6c}
\end{figure}

\subsubsection{An example of complete segregation}\label{sec:complete_seg}
Finally, we turn to answer the question whether complete separation occurs at all or not; that is, whether the supports of the gang densities can become disjoint in the large-time limit. We show that this is at least possible in settings where  the domain is divided into two parts in each of which one gang density is extremely small already at the initial time, and when additionally the diffusion rates are very low, the former prevents mixed amount of population and their interaction in the same regions at the beginning and the later controls their spread inside the domain over time. For this reason, we consider the case where $D_u=D_v=0.01$ and $\chi_u=\chi_v=3$ and the initial conditions are given by
\begin{equation}\label{eq:init_data_num2}
u^0(x,y) = \ure^{-(x-3)^2 -(y-3)^2}\,, \quad v^0(x,y) = \ure^{-(x+3)^2 - (y+3)^2}\,, \quad w^0(x,y) = 0\,, \quad z^0(x,y) = 0\,,
\end{equation}
which are displayed in Figure~\ref{fig7ab}(a). Compared to the initial data in \eqref{eq:init_data_num1}, the functions in \eqref{eq:init_data_num2} miss the additive constant $0.1$ and their maxima are further apart from each other. Therefore, the dark purple at the initial stage in Figure~\ref{fig7ab}(a) represents not only that both concentrations are (roughly) equal but additionally that they are both roughly zero.

From Figure~\ref{fig7ab}, we observe that the gangs stay totally separated over the time and there is no interaction between the two groups from beginning to the end. The snapshot of the result along the line $y=x$ are displayed in the Figure~\ref{fig7c}. We also show the close up of population densities at different time instants along the line $y=x$ in Figure~\ref{fig7d}, as can be seen no size of gang densities $u$ appears in the regions occupied by the gang density $v$ and vice-versa.

\begin{figure}[H]
	\centering
	\begin{subfigure}{0.155\textwidth}
		\includegraphics[width=\textwidth]{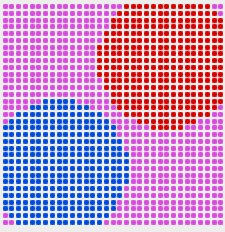}
		\caption{t = 0}
	\end{subfigure}
	\begin{subfigure}{0.155\textwidth}
		\includegraphics[width=\textwidth]{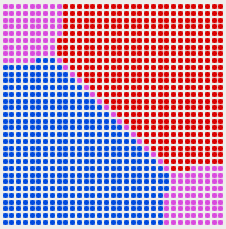}
		\caption{t = 50}
	\end{subfigure}
	\begin{subfigure}{0.155\textwidth}
		\includegraphics[width=\textwidth]{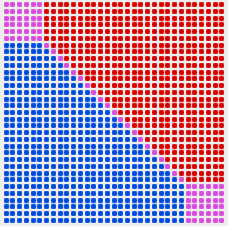}
		\caption{t = 75}
	\end{subfigure}
	\begin{subfigure}{0.155\textwidth}
		\includegraphics[width=\textwidth]{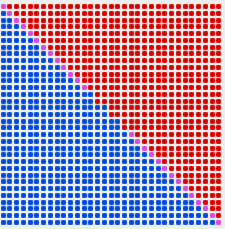}
		\caption{t = 200}
	\end{subfigure}
	\begin{subfigure}{0.155\textwidth}
		\includegraphics[width=\textwidth]{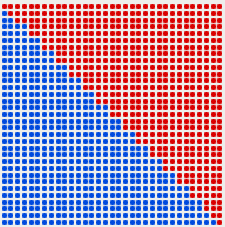}
		\caption{t = 500}
	\end{subfigure}
	\begin{subfigure}{0.155\textwidth}
		\includegraphics[width=\textwidth]{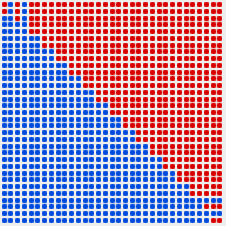}
		\caption{t = 1000}
	\end{subfigure}
	
		\begin{subfigure}{0.155\textwidth}
		\includegraphics[width=\textwidth]{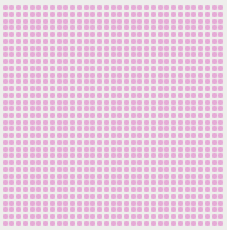}
		\caption{t = 0}
	\end{subfigure}
	\begin{subfigure}{0.155\textwidth}
		\includegraphics[width=\textwidth]{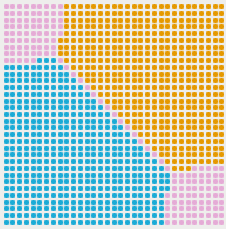}
		\caption{t = 50}
	\end{subfigure}
	\begin{subfigure}{0.155\textwidth}
		\includegraphics[width=\textwidth]{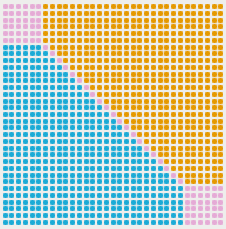}
		\caption{t = 75}
	\end{subfigure}
	\begin{subfigure}{0.155\textwidth}
		\includegraphics[width=\textwidth]{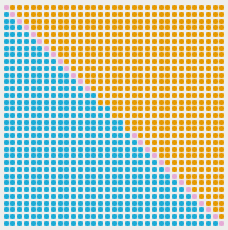}
		\caption{t = 200}
	\end{subfigure}
	\begin{subfigure}{0.155\textwidth}
		\includegraphics[width=\textwidth]{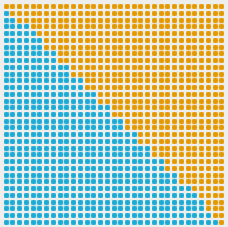}
		\caption{t = 500}
	\end{subfigure}
	\begin{subfigure}{0.155\textwidth}
		\includegraphics[width=\textwidth]{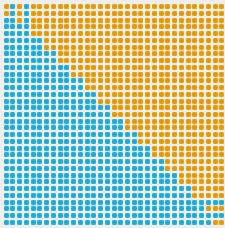}
		\caption{t = 1000}
	\end{subfigure}
	\caption{\small
	Numerical solutions for the model problem (\ref{prob}) obtained using the FEM-FCT method, at different time instant $t=0, 50, 75, 200, 500, 1000$ when $D_u=D_v=0.01 $ and $\chi_u=\chi_v =3.0$. For the choice of colors, see the beginning of Subsection~\ref{sep-study}.
	 }
	\label{fig7ab}
\end{figure} 

\begin{figure}[H]
	\centering
		\begin{subfigure}{0.155\textwidth}
		\includegraphics[width=\textwidth]{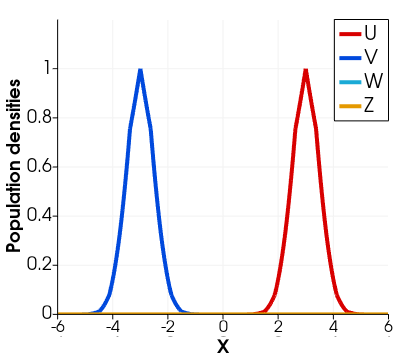}
		\caption{t = 0}
	\end{subfigure}
	\begin{subfigure}{0.155\textwidth}
		\includegraphics[width=\textwidth]{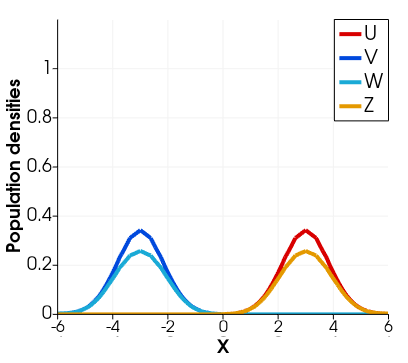}
		\caption{t = 50}
	\end{subfigure}
	\begin{subfigure}{0.155\textwidth}
		\includegraphics[width=\textwidth]{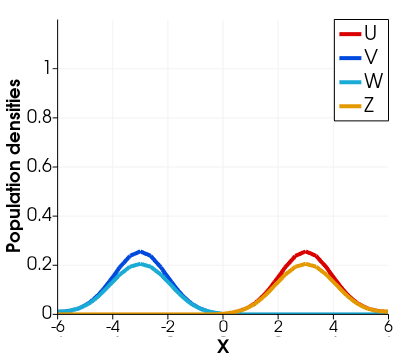}
		\caption{t = 75}
	\end{subfigure}
	\begin{subfigure}{0.155\textwidth}
		\includegraphics[width=\textwidth]{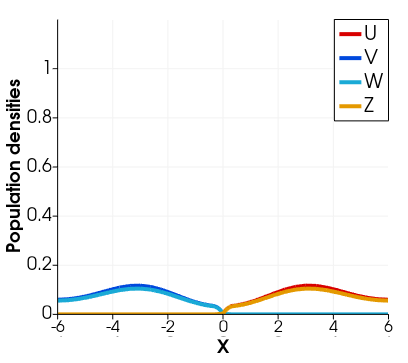}
		\caption{t = 200}
	\end{subfigure}
	\begin{subfigure}{0.155\textwidth}
		\includegraphics[width=\textwidth]{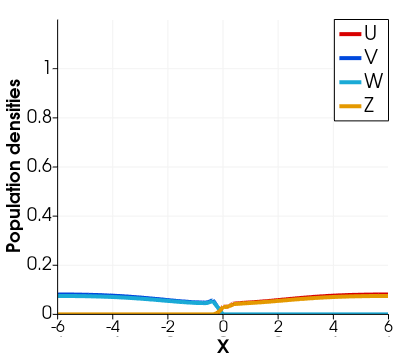}
		\caption{t = 500}
	\end{subfigure}
	\begin{subfigure}{0.155\textwidth}
		\includegraphics[width=\textwidth]{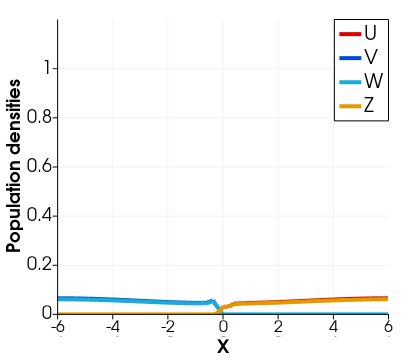}
		\caption{t = 1000}
	\end{subfigure}
	\caption{\small The size of gang populations $u$,$v$ and the amount of their corresponding graffiti $z$ and $w$ over time along the line $y=x$ at different time instant $t=0, 50, 75, 200, 500, 1000$ when $D_u=D_v=0.01$ and $\chi_u=\chi_v =3.0.$}
	\label{fig7c}
\end{figure} 

\begin{figure}[H]
	\centering
	\begin{subfigure}{0.155\textwidth}
		\includegraphics[width=\textwidth]{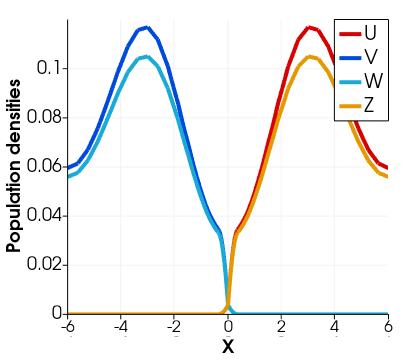}
		\caption{t = 200}
	\end{subfigure}
	\begin{subfigure}{0.155\textwidth}
		\includegraphics[width=\textwidth]{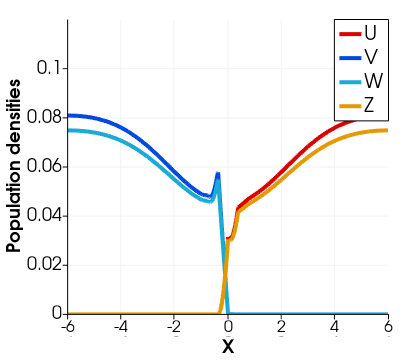}
		\caption{t = 500}
	\end{subfigure}
	\begin{subfigure}{0.155\textwidth}
		\includegraphics[width=\textwidth]{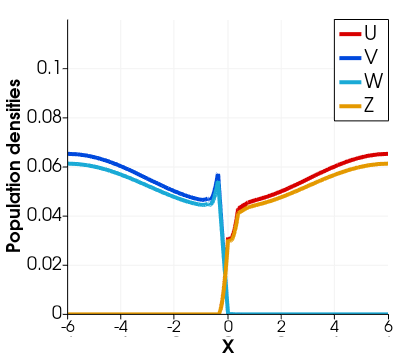}
		\caption{t = 1000}
	\end{subfigure}
	\caption{\small
		 Close up of populations densities along the line $y=x$ at different time instant $t= 200, 500, 1000$ when $D_u=D_v=0.01 $ and $\chi_u=\chi_v =3.0.$
	 }
	\label{fig7d}
\end{figure} 

\subsection{Time step and mesh convergence study}
Now, we examine the effect of time and mesh refinement, which is one of the standard numerical experiments used to validate approximate solutions. For this reason, as in the first example in Subsection~\ref{sec:numeric_conv_nonsss} (see Figures~\ref{fig4ab}, \ref{fig4c}) we consider $D_u=D_v=0.25$, $\chi_u=\chi_v=3$ and,  the initial conditions (\ref{eq:init_data_num1})
over the time interval $[0, 500]$. To begin with, we keep all of the setting fixed as before and only coarsen and refine the mesh. Comparing the plots depicted in Figure~\ref{fig8a} for different levels of refinement at final time $T=500$, we observe that the segregation patterns are almost the same with slight smoothness in the segregated patches boundaries when using the finer mesh. Therefore, it is clear that the formation of blue and red cluster inside the domain does not change significantly by changing the mesh size. The snapshots of the mesh convergence results along the line $y=x$ are also displayed in Figure~\ref{fig8b}, which shows the approximate solutions obtained at different refinement levels almost converge toward the same segregation states.

\begin{figure}[H]
	\centering
	\begin{subfigure}{0.19\textwidth}
		\includegraphics[width=\textwidth]{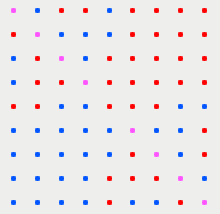}
		\caption{3 refinements}
	\end{subfigure}
	\begin{subfigure}{0.19\textwidth}
		\includegraphics[width=\textwidth]{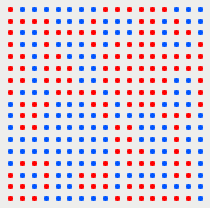}
		\caption{4 refinements}
	\end{subfigure}
	\begin{subfigure}{0.19\textwidth}
		\includegraphics[width=\textwidth]{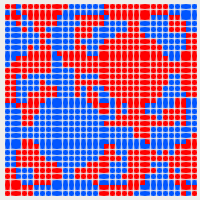}
		\caption{5 refinements}
	\end{subfigure}
	\begin{subfigure}{0.19\textwidth}
		\includegraphics[width=\textwidth]{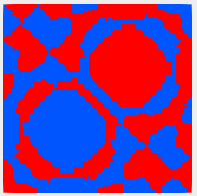}
		\caption{6 refinements}
	\end{subfigure}
	\begin{subfigure}{0.19\textwidth}
		\includegraphics[width=\textwidth]{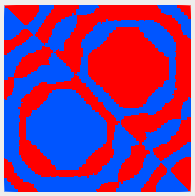}
		\caption{7 refinements}
	\end{subfigure}
	\caption{\small
	Numerical solutions $u$ and $v$ obtained using FEM-FCT scheme in different refinement levels at final time $T= 500$ where $D_u=D_v=0.25$ and $\chi_u=\chi_v=3$. For the choice of colors, see the beginning of Subsection~\ref{sep-study}.
	 }
	\label{fig8a}
\end{figure} 

Next, preserving the 5 level of refinements as in all the previous examples and keep the other parameters fixed as before, we choose different time step sizes in our computation. Figure~\ref{fig9a} shows that the segregation patterns are similar and the differences between the approximate solutions obtained using FEM-FCT with different time steps $\Delta t=1.0, 0.5, 0.25, 0.125,$ and $0.0625$ are very small. The snapshots of the results along the line $y=x$ are depicted in Figure~\ref{fig9b}, which shows the time convergence behavior.

\begin{figure}[H]
	\centering
	\begin{subfigure}{0.275\textwidth}
		\includegraphics[width=\textwidth]{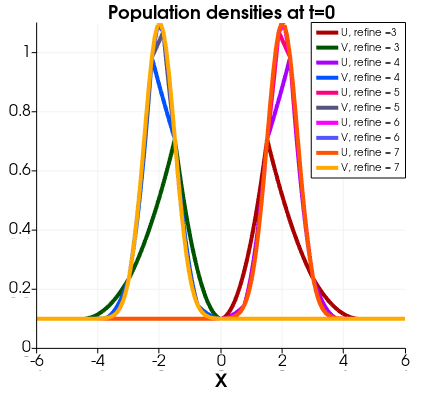}
		\caption{$t$ = 0.0}
	\end{subfigure}
	\begin{subfigure}{0.275\textwidth}
		\includegraphics[width=\textwidth]{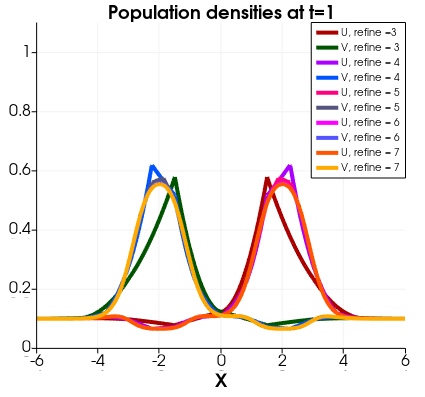}
		\caption{$t$ = 1}
	\end{subfigure}
	\begin{subfigure}{0.275\textwidth}
		\includegraphics[width=\textwidth]{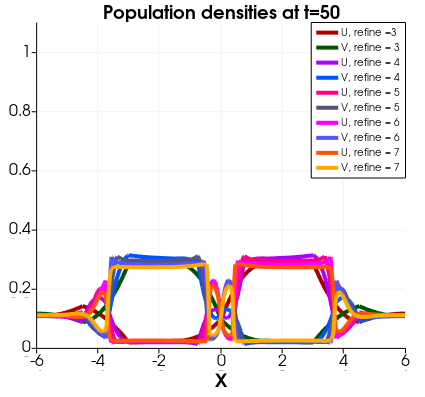}
		\caption{$t$ = 50}
	\end{subfigure}
	\begin{subfigure}{0.275\textwidth}
		\includegraphics[width=\textwidth]{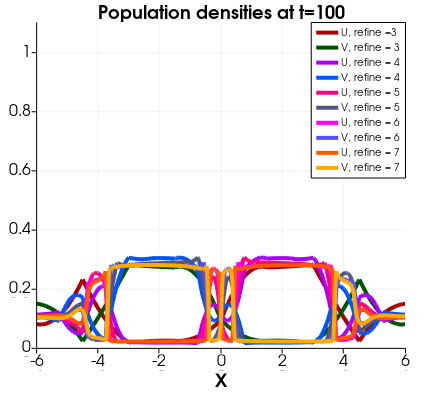}
		\caption{$t$ = 100}
	\end{subfigure}
	\begin{subfigure}{0.275\textwidth}
		\includegraphics[width=\textwidth]{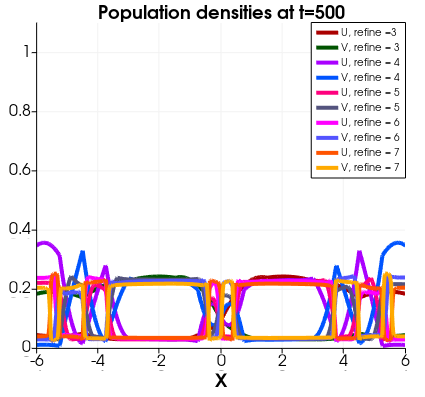}
		\caption{$t$ = 500}
	\end{subfigure}
	\caption{\small
	The amount of gang concentration $u$, $v$ and graffiti densities $w$ and $z$ along the line $y=x$ in different level of refinement at different time instants $ t= 0, 1, 50, 100, 500$ where $D_u=D_v=0.25$ and $\chi_u=\chi_v=3$.
	 }
	\label{fig8b}
\end{figure} 

\begin{figure}[H]
	\centering
	\begin{subfigure}{0.19\textwidth}
		\includegraphics[width=\textwidth]{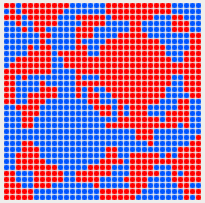}
		\caption{$\Delta t = 1.0$}
	\end{subfigure}
	\begin{subfigure}{0.19\textwidth}
		\includegraphics[width=\textwidth]{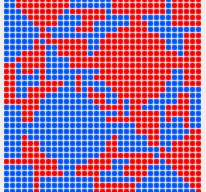}
		\caption{$\Delta t = 0.5$}
	\end{subfigure}
	\begin{subfigure}{0.19\textwidth}
		\includegraphics[width=\textwidth]{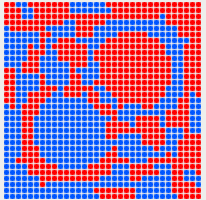}
		\caption{$\Delta t = 0.25$}
	\end{subfigure}
	\begin{subfigure}{0.19\textwidth}
		\includegraphics[width=\textwidth]{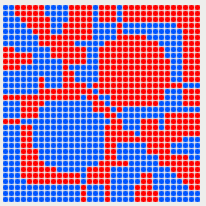}
		\caption{$\Delta t = 0.125$}
	\end{subfigure}
	\begin{subfigure}{0.19\textwidth}
		\includegraphics[width=\textwidth]{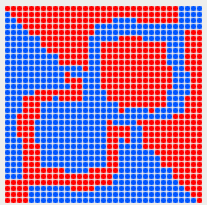}
		\caption{$\Delta t = 0.0625$}
	\end{subfigure}
	\caption{\small
Numerical solutions $u$ and $v$ obtained using FEM-FCT scheme with different number of time steps $\Delta t = 1.0, 0.5, 0.25, 0.125, 0.0625$ at final time $T= 500$ where $D_u=D_v=0.25$ and $\chi_u=\chi_v=3$. For the choice of colors, see the beginning of Subsection~\ref{sep-study}.
	 }
	\label{fig9a}
\end{figure} 

\begin{figure}[H]
	\centering
	\begin{subfigure}{0.275\textwidth}
		\includegraphics[width=\textwidth]{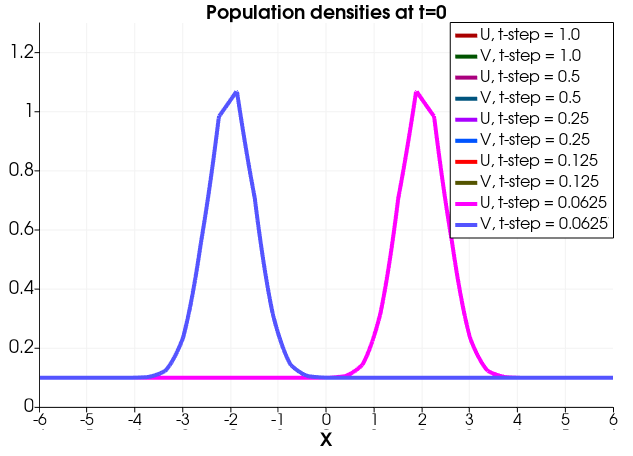}
		\caption{ $t=0$}
	\end{subfigure}
	\begin{subfigure}{0.275\textwidth}
		\includegraphics[width=\textwidth]{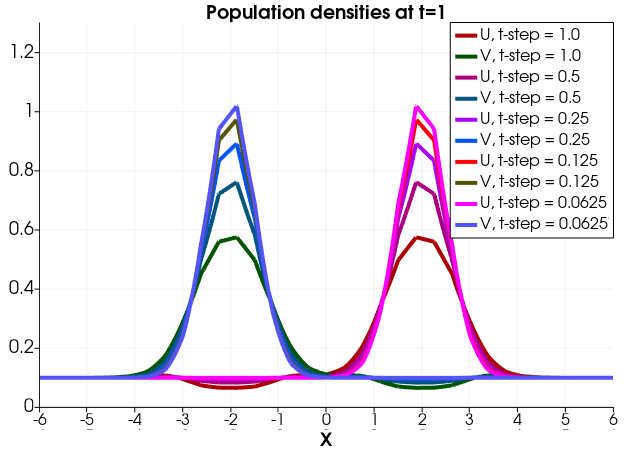}
		\caption{$t=1.0$}
	\end{subfigure}
	\begin{subfigure}{0.275\textwidth}
		\includegraphics[width=\textwidth]{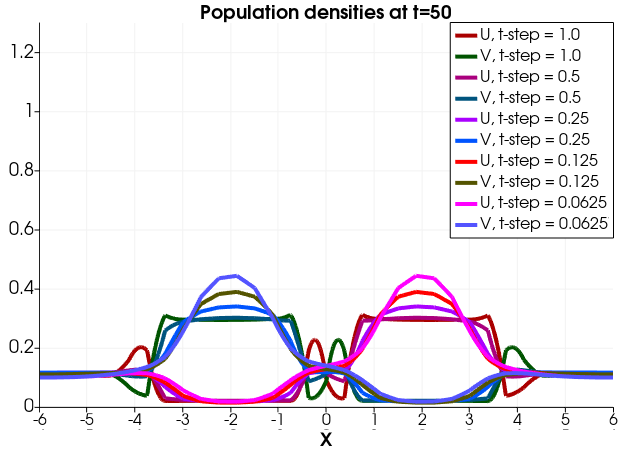}
		\caption{$t=50$}
	\end{subfigure}
	\begin{subfigure}{0.275\textwidth}
		\includegraphics[width=\textwidth]{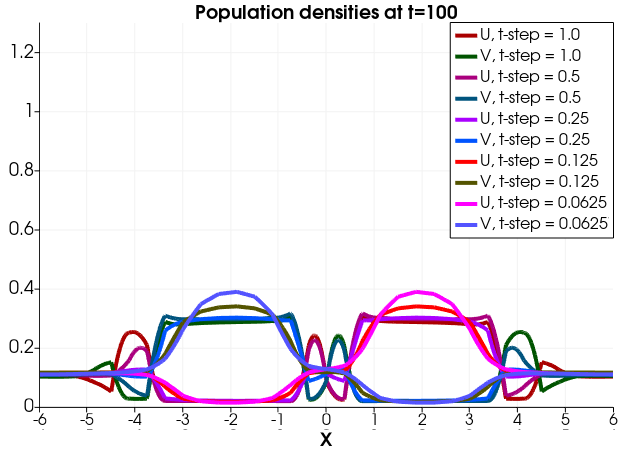}
		\caption{$t= 100$}
	\end{subfigure}
	\begin{subfigure}{0.275\textwidth}
		\includegraphics[width=\textwidth]{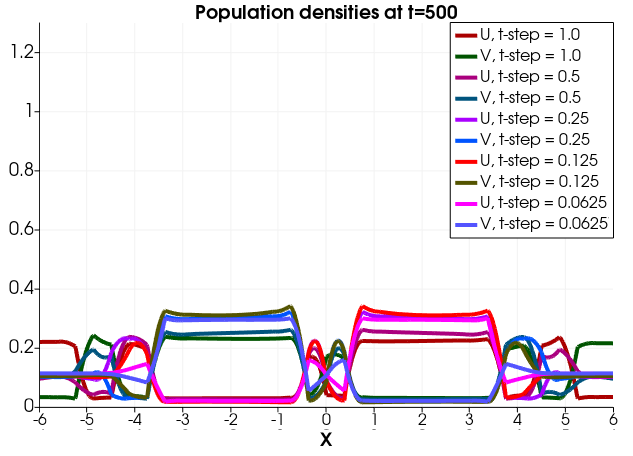}
		\caption{$t=500$}
	\end{subfigure}
	\caption{\small
	The amount of gangs concentration $u$, $v$ and graffiti densities $z$ and $w$ along the line $y=x$ with different time  steps at different time instants $ t= 0, 1, 50, 100, 500$ where $D_u=D_v=0.25$ and $\chi_u=\chi_v=3$.
	 }
	\label{fig9b}
\end{figure} 

We conclude that these convergence studies provide strong indication
that our numerical experiments are reliable
and that hence indeed both partial and complete separation may occur in \eqref{prob}.

\section*{Acknowledgments}
The work of Shahin Heydari was supported through the grant SVV-2023-260711 of Charles University.

\footnotesize

\end{document}